\newcommand{\N}{{\mathbb N}}
\newcommand{\Z}{{\mathbb Z}}
\newcommand{\C}{{\mathbb C}}
\newcommand{\R}{{\mathbb R}}
\renewcommand{\P}{{\mathbb P}}
\newcommand{\HH}{{\mathcal H}}
\newcommand{\Nu}{\hat{V}}
\newcommand{\OO}{{\mathcal O}}
\newcommand{\VV}{{\mathcal V}}
\newcommand{\ddd}{{\rm d}}
\newcommand{\www}{\widetilde}
\newcommand{\whh}{\widehat}
\newcommand{\oooo}{\overline}
\newcommand{\nmmm}{{\{0\}\times M}}
\newcommand{\immm}{{\{\infty\}\times M}}
\newcommand{\cmmm}{{\C\times M}}
\newcommand{\csmmm}{{\C^*\times M}}
\newcommand{\pmmm}{{\P^1\times M}}
\newcommand{\oomm}{\Omega}
\newcommand{\paa}{\partial}
\newcommand{\zdz}{{z\partial_z}}
\newcommand{\nnn}{\nabla}
\DeclareMathOperator{\Gr}{Gr}
\DeclareMathOperator{\id}{id}
\DeclareMathOperator{\Lie}{Lie}
\DeclareMathOperator{\rank}{rank}
\DeclareMathOperator{\rk}{rk}
\begin{document}

\theoremstyle{plain}
\newtheorem{lemma}{Lemma}[section]
\newtheorem{theorem}[lemma]{Theorem}
\newtheorem{proposition}[lemma]{Proposition}
\newtheorem{corollary}[lemma]{Corollary}
\newtheorem{conjecture}[lemma]{Conjecture}
\newtheorem{conjectures}[lemma]{Conjectures}

\theoremstyle{definition}
\newtheorem{definition}[lemma]{Definition}
\newtheorem{withouttitle}[lemma]{}
\newtheorem{remark}[lemma]{Remark}
\newtheorem{remarks}[lemma]{Remarks}
\newtheorem{example}[lemma]{Example}
\newtheorem{examples}[lemma]{Examples}

\title{Frobenius manifolds, projective special geometry and Hitchin systems}

\author{Claus Hertling\and Luuk Hoevenaars\and Hessel Posthuma}

\address{Claus Hertling\\
Lehrstuhl f\"ur Mathematik VI, Universit\"at Mannheim, Seminargeb\"aude
A 5, 6, 68131 Mannheim, Germany}

\email{hertling\char64 math.uni-mannheim.de}

\address{Luuk Hoevenaars\\
Mathematisch Instituut, Universiteit Utrecht, P.O. BOX 80.010, NL-3508 TA
Utrecht, Nederland}

\email{L.K.Hoevenaars\char64 uu.nl}

\address{Hessel Posthuma\\
Korteweg-de Vries institute for mathematics, University of Amsterdam, P.O. Box 94248,
1090 GE, Amsterdam, The Netherlands}

\email{H.B.Posthuma\char64 math.uva.nl}

\subjclass[2000]{14J32,53D45,53C26,14H70}

\keywords{Frobenius manifolds, special geometry, Calabi-Yau threefolds,
F-manifolds, potentials, unfoldings of meromorphic connections, integrable systems, Hitchin systems, stable bundles.}

\date{19 May 2009}

\maketitle

\begin{abstract}
We consider the construction of Frobenius manifolds associated to projective special geometry and analyse the dependence on choices involved. 
In particular, we prove that the underlying $F$-manifold is canonical.
We then apply this construction to integrable systems of Hitchin type.
\end{abstract}


\setcounter{section}{0}

\section*{Introduction}
\label{intro}
One way of formulating the mirror symmetry conjecture is in terms of 
Frobenius manifolds. On the one hand (the $A$-side) it is well known
that the quantum cohomology product gives rise to a natural Frobenius manifold. 
The other, $B$-side, is constructed from certain variations of Hodge structures. 
In the case of Calabi-Yau threefolds, this $B$-side of the story is perhaps less well-known 
to mathematicians and appears implicitly in \cite{BARA-KONT:1998,BARA:2002,BARA:2001,FERN-PEAR:2004,HERT-MANI:2004}.
 
One of the purposes of this paper is to give an elementary description of 
this $B$-side Frobenius geometry and to specify the dependence on choices 
one has to make. The starting point for this will be a projective special geometry,
or in other words, an abstract variation of Hodge structures of weight 3 of the type considered in \cite{BRYA-GRIF:1983}.
There are two choices involved in the construction: one is a generator for the degree three subspace in the Hodge filtration, i.e., a volume form in the case of Calabi--Yau threefolds. Second, and more importantly, a choice of an {\em opposite filtration}.
Parts of this choice have a natural interpretation in terms of special geometry as 
choices of affine coordinate patches. Other parts lie outside the realm of special geometry.
Remarkably we find that the underlying $F$-manifold, cf. \cite{HERT-MANI:2004}, is independent of all choices.
Also remark that in the geometric context of Calabi--Yau threefolds, the large complex structure limit gives a specific opposite filtration \cite{DELI:1997}.
 
In the second part of the paper, we apply these results to integrable systems of Hitchin type \cite{HITC:1987}.
Namely, first of all, we show that the special geometry on the base cf. \cite{FREE:1999} can be made projective.
Equivalently, the variation of polarized Hodge structures of weight one refines to a variation of Hodge like filtrations of weight three as in \cite{BRYA-GRIF:1983}. 
Underlying this construction is a certain family of cameral curves and a Seiberg--Witten differential that is constructed 
in terms of the natural $\C^*$-action on the total space of the integrable system.
This is closely related to \cite{DIAC-DIJK-DONA-HOFM-PANT:2006, DIAC-DONA-PANT:2007} where in the cases of $ADE$-groups a family of Calabi--Yau threefolds was constructed whose variation of --a priori mixed-- Hodge structures coincides with that of the Hitchin system.

This brings us to the starting data of the first part of the paper, and gives us constructions of the associated Frobenius manifolds.
In this example all choices, abstractly defined in the first part, have a natural interpretation in the elementary geometry of curves.
We believe this Frobenius manifold to be of interest for the following reasons: first of all, as shown in \cite{HAUS-THAD:2003} the Hitchin integrable systems associated to Langlands dual groups are SYZ-mirror to one another. Second, in the geometric transition conjecture for $ADE$-fibered Calabi--Yau threefolds \cite{DIAC-DIJK-DONA-HOFM-PANT:2006}, one of the two --conjecturally equal-- string theories is captured entirely by the underlying special geometry.

\section*{Outline}
A Kuranishi family with base manifold $B_0$ 
of Calaby-Yau threefolds gives rise to a 
variation of polarized Hodge structures of weight 3 on the
primitive part of the the middle cohomology bundle
with some distinguished properties.
Such a VPHS induces on the one hand projective special K\"ahler geometry
on a manifold $B$ with $\dim B=\dim B_0+1$,
on the other hand it induces Frobenius manifold structures on a 
manifold $M$ with $\dim M =2\dim B_0+2$.
Both geometries contain some flat structures and potentials
and both depend on additional choices.

The purpose of the first five sections is to review the 
(well known) constructions, to discuss the dependence on choices
and to give a comparison.
Section \ref{c1} gives definitions, section \ref{c2}
treats Frobenius manifolds, section \ref{c3} shows that the underlying $F$-manifold
structure is independent of choices, section \ref{c5} treats
a part of projective special K\"ahler geometry, 
and section \ref{c6} compares them.

Section \ref{hit} reviews the Hitchin system, section \ref{SWdif} introduces the Seiberg-Witten differential
and section \ref{camVHS} gives the variation of Hodge like filtrations of weight three on the
base of the Hitchin system. Finally, the results of sections $\ref{c1}$--$\ref{c6}$ are applied to the Hitchin system
in section \ref{Frob}.

\section{Some definitions}\label{c1}
\setcounter{equation}{0}

\noindent
In the next five sections $B_0$ will be a small neighborhood
of a base point 0 in a complex manifold of dimension $n$.
When necessary, the size will be decreased, 
so essentially the germ $(B_0,0)$ is considered,
but we will not emphasize this.

\subsection{Variation of polarized Hodge structures (VPHS)}\label{c1.1}

\noindent
A VPHS of weight $w\in\Z$ on $B_0$ consists of data
$(B_0,\mathcal{V},\nnn,\mathcal{V}_\R,S,F^\bullet,w)$.
Here $\mathcal{V}$ is a holomorphic vector bundle with a flat connection
$\nnn$ and a real $\nnn$-flat subbundle $\mathcal{V}_\R$ such that
$\mathcal{V}=\mathcal{V}_\R\otimes \C$, $S$ is a $(-1)^w$-symmetric $\nnn$-flat 
nondegenerate pairing on $\mathcal{V}$ with real values on $\mathcal{V}_\R$,
the decreasing Hodge filtration $F^\bullet$ is a filtration
of holomorphic subbundles with
\begin{eqnarray}\label{1.1}
&& \nnn:\OO(F^p)\to \OO(F^{p-1})\otimes \Omega_{B_0}^1 \quad
\textup{ (Griffiths transversality)},\\ \label{1.2}
&& \mathcal{V}=F^p\oplus \oooo{F^{w+1-p}} \qquad 
\textup{ (Hodge structure)},\quad \textup{ equivalent:}
\\ \label{1.3}
&&  \mathcal{V}=\bigoplus_pH^{p,w-p}
\qquad \textup{ where }H^{p,w-p}:=F^p\cap\overline{F^{w-p}},\\ \label{1.4}
&& S(F^p,F^{w+1-p}) = 0 \qquad
\textup{ (part of the polarization)},\\ \label{1.5}
&& i^{2p-w}S(v,\oooo v)> 0 \quad \textup{ for }\quad 
v\in H^{p,w-p}-\{0\} \\ \nonumber
&&\hspace*{4.5cm} \textup{ (rest of the polarization)}.
\end{eqnarray}
In this chapter the real structure and the conditions 
\eqref{1.2}, \eqref{1.3} and \eqref{1.5} will play no role.
Data $(B_0,\mathcal{V},\nnn,S,F^\bullet,w)$ as above with \eqref{1.1}
and \eqref{1.4} only will be called {\it variation of 
Hodge like filtrations with pairing}.

The connection $\nnn$ induces a Higgs field $C$ on
$\bigoplus_p F^p/F^{p+1}$, 
\begin{eqnarray}\label{1.6}
C =[\nnn]: \OO(F^p/F^{p+1}) \to  \OO(F^{p-1}/F^p)\otimes \Omega^1_{B_0}
\end{eqnarray}
with $C_XC_Y=C_YC_X$ for $X,Y\in T_{B_0}$. From section \ref{c2} on, we consider
only data which satisfy $w=3$ and the two conditions:
\begin{eqnarray}
&&\left. \begin{matrix} 
F^{w+1} = 0,\quad \rank F^w=1,\quad \rank F^{w-1}/ F^w=n,\\
\textup{and thus \ }\rank F^1/F^2=n,\quad 
\rank F^0/F^1=1,\quad F^0=\mathcal{V}.\end{matrix}\right\} \label{1.7}\\
&&\left. \begin{matrix} 
\textup{For any }\lambda_0\in F^w_0-\{0\}\quad\textup{ the map }\\
\quad C_\bullet\lambda_0 : T_0 B_0\to F^{w-1}_0/F^w_0
\quad\textup{ is an isomorphism.}\end{matrix}\right\}  \label{1.8}
\end{eqnarray}
As $B_0$ is small, \eqref{1.8} extends from $0\in B_0$ to all points
in $B_0$.
The conditions \eqref{1.7} and \eqref{1.8} together are called
{\it CY-condition}. 
This condition was discussed in \cite{BRYA-GRIF:1983} and is weaker than
the so-called $H^2$-generating condition considered in \cite[ch. 5]{HERT-MANI:2004}.
%
%

\subsection{Opposite filtrations}\label{c1.2}

\noindent
An {\it opposite filtration} $U_\bullet$ is defined to be an increasing
$\nnn$-flat filtration with
\begin{eqnarray}\label{1.10}
&&\left.\begin{matrix} 
\mathcal{V}= &\bigoplus_p F^p\cap U_p &\textup{ or equivalently }\\
\mathcal{V}=&F^p\oplus U_{p-1},&\end{matrix}\right\} \\ \label{1.11}
&&S(U_p,U_{w-1-p})=0.
\end{eqnarray}
As it is $\nnn$-flat, $U_\bullet$ is determined by $U_\bullet \mathcal{V}_0$
and will be identified with that filtration.
The splitting in \eqref{1.10} is holomorphic,
the one on \eqref{1.3} is only real analytic.
Both are $S$-orthogonal in the sense
\begin{eqnarray}\label{1.12}
S(F^p\cap U_p,F^q\cap U_q)=0=S(H^{p,w-p},H^{q,w-q})\quad
\textup{ if }p+q\neq w.
\end{eqnarray}
$S$ and $U_\bullet$ induce a symmetric and nondegenerate pairing
$g^U$ on $\mathcal{V}$ by 
\begin{eqnarray}\label{1.13}
g^U(a,b) := (-1)^p  S(a,b)\quad \textup{ for }\quad
a\in \OO(F^p\cap U_p),b\in\OO(\mathcal{V}).
\end{eqnarray}
The splitting in \eqref{1.10} is also $g^U$-orthogonal in the sense
of \eqref{1.12}. 

Now the connection $\nnn$ decomposes into $\nnn=\nnn^U+C^U$,
where $\nnn^U$ is a connection on each subbundle $F^p\cap U_p$
and $C^U$ is $\OO_{B_0}$-linear and maps $F^p\cap U_p$ to
$(F^{p-1}\cap U_{p-1})\otimes \Omega_{B_0}^1$.
The flatness of $\nnn$ is equivalent to $\nnn^U$ being flat,
$C^U$ being a Higgs field and the potentiality condition
$\nnn^U(C^U)=0$, more explicitly:
\begin{eqnarray}\label{1.14}
\nnn^U_X(C^U_Y)-\nnn^U_Y(C^U_X) - C^U_{[X,Y]}=0\quad
\textup{ for }X,Y\in T_{B_0}.
\end{eqnarray}
Because of \eqref{1.12} and the $\nnn$-flatness of $S$,
both $S$ and $g^U$ are $\nnn^U$-flat, and $g^U$ satisfies
\begin{eqnarray}\label{1.15}
g^U(C^U_Xa,b)=g^U(a,C^U_Xb)\qquad \textup{ for }\quad
X\in T_{B_0},\ a,b\in \OO(\mathcal{V}).
\end{eqnarray}
That is: $C^U$ is selfadjoint with respect to $g^U$.
Define the endomorphism
$$\Nu^U:\mathcal{V}\to \mathcal{V},\quad \Nu^U:=\sum_p(p-\frac{w}{2})\id_{|F^p\cap U_p}.$$
Then $\Nu^U$ is $\nnn^U$-flat, and $(\Nu^U)^*=-\Nu^U$ where $*$ denotes the adjoint with respect to $g^U$, and 
$[C^U,\Nu^U]=C^U$.

In the case $w=3$, the combination of the CY-condition \eqref{1.7} \& \eqref{1.8}
and of the choice of an opposite filtration $U_\bullet$ leads
to a Frobenius manifold, and the combination of the CY-condition
and of the part $U_1$ of an opposite filtration leads to 
(a part of) projective special K\"ahler geometry.
This will be discussed in the sections \ref{c2} and \ref{c5}.

\subsection{Frobenius manifolds and F-manifolds}\label{c1.3}

\noindent
An {\it F-manifold} $(M,\circ,e,E)$ \cite{HERT-MANI:1999,HERT:2002}
is a complex manifold
of dimension $\geq 1$ with a commutative and associative
multiplication on the holomorphic tangent bundle $T_M$,
a unit field $e\in T_M$ and an Euler field $E\in T_M$ with the
following two properties: $\Lie_E(\circ)=0$ and 
\begin{eqnarray}\label{1.16} 
\Lie_{X\circ Y}(\circ) = X\circ \Lie_Y(\circ) + Y\circ \Lie_Y(\circ).
\end{eqnarray}
\eqref{1.16} implies $\Lie_e(\circ)=0$.

A {\it Frobenius manifold} $(M,\circ,e,E,g)$ \cite{DUBR:1996} is an F-manifold
together with a symmetric nondegenerate $\OO_M$-bilinear pairing
$g$ on $T_M$ with the following properties:
its Levi-Civita connection $\nnn^g$ is flat; there is a potential
$\Phi\in\OO_M$ such that for $\nnn^g$-flat vector fields $X,Y,Z$
\begin{eqnarray}\label{1.17}
g(X\circ Y,Z)=XYZ(\Phi);
\end{eqnarray}
the unit field $e$ is $\nnn^g$-flat; the Euler field $E$ satisfies
$\Lie_E(g)=(2-w)\cdot g$ for some $w\in \C$.

In fact, the potentiality condition and the flatness imply 
\eqref{1.16}, cf.\  \cite{HERT-MANI:1999,HERT:2002}.
They also imply that the metric is multiplication invariant,
\begin{eqnarray}\label{1.18}
g(X\circ Y,Z)=g(X,Y\circ Z) \qquad \textup{ for }\quad
X,Y,Z\in T_M.
\end{eqnarray}
It turns out that $\nnn^g_\bullet E$ is a flat endomorphism
of the tangent bundle.

A Frobenius manifold $M$ with a base point $0\in M$ is called
{\it semihomogeneous} if $w\in\N$ and if there are integers
$0=p_1\leq p_2\leq ... \leq p_{\dim M-1}\leq p_{\dim M}=w$
and flat coordinates $t_i$ centered at $0$ such that
$e=\frac{\partial}{\partial t_1}$ and 
\begin{eqnarray}\label{1.19}
E=\sum_{i=1}^{\dim M} (1-p_i)t_i\frac{\partial}{\partial t_i}.
\end{eqnarray}
Then the numbers $p_i$ are unique, because the numbers $p_i-1$
are the eigenvalues of $\nnn^g_\bullet E$.
The coordinates $t_i$ are called semihomogeneous.

\section{Frobenius manifolds from VPHS of weight 3}\label{c2}
\setcounter{equation}{0}

\noindent
Throughout the whole section \ref{c2} except lemma \ref{c2.3},
a variation $((B_0,0),\mathcal{V},\nnn,S,F^\bullet,w=3)$ of Hodge like
filtrations with pairing (see \ref{c1.1} for this notion) 
of weight 3 with CY-condition \eqref{1.7}\&\eqref{1.8}
and $n=\dim B_0$ is fixed.

Theorem \ref{c2.2} (a) gives a construction of Frobenius manifolds
from it and an additional choice. 
This construction is well known, but usually 
hidden within much richer structures 
\cite{BARA-KONT:1998,BARA:2002,BARA:2001,FERN-PEAR:2004,HERT-MANI:2004}. We will give a proof
which will make the comparison with projective special 
K\"ahler geometry easy.

Theorem \ref{c2.2} (b) shows that the underlying F-manifold
with Euler field is independent of the additional choice,
contrary to the flat structure and the metric.
This result is new. It will be proved in section \ref{c3.6}
(see also remark \ref{c2.3}).

The section starts with lemma \ref{c2.1} which discusses the geometry
of the variation of Hodge like filtrations of weight 3 with CY-condition and
an opposite filtration. It yields coordinates $t_2,...,t_{n+1}$ and a 
prepotential $\Psi\in \OO_{B_0}$. It is complemented by lemma \ref{c2.4}
which constructs the initial data of lemma \ref{c2.1} out of 
coordinates $t_2,...,t_{n+1}$ on $B_0$ and an arbitrary function
$\Psi\in\OO_{B_0}$. It shows that the prepotential $\Psi$ in lemma \ref{c2.1}
is not subject to any hidden conditions.
Finally, proposition \ref{c2.5} discusses the automorphisms of the
F-manifold which underlies the Frobenius manifolds in Theorem \ref{c2.2}.
\begin{lemma}\label{c2.1}
Additionally to the variation of Hodge like filtrations of weight 3
with CY-condition fixed above, choose the following data:
\begin{itemize}{}{}
\item[(1)]
An opposite filtration $U_\bullet$. By section \eqref{1.2} it induces
a flat connection $\nnn^U$ on each subbundle of the splitting 
$\mathcal{V}=\bigoplus_{p=0}^3F^p\cap U_p$; and the pairing $S$ is $\nnn$-flat
and $\nnn^U$-flat.
\item[(2)]
A $\nnn^U$-flat basis $v_1,...,v_{2n+2}$ of $\mathcal{V}$ which is compatible
with the splitting of $\mathcal{V}$ and the pairing $S$, and
\begin{eqnarray}\label{2.1}
\left.\begin{matrix}
v_1\in F^3; &  v_2,...,v_{n+1}\in F^2\cap U_2;\\
v_{n+2},...,v_{2n+1}\in F^1\cap U_1; &  v_{2n+2}\in U_0;
\end{matrix}\right\}  \\
S(v_1,v_{2n+2})=-1;\ S(v_k,v_l)=\delta_{k+n,l}\quad
\textup{ for }2\leq k\leq n+1. \label{2.2}
\end{eqnarray}
\end{itemize}
Let $v_1^0,...,v_{2n+2}^0$ be the $\nnn$-flat (here $\nnn$, not $\nnn^U$)
extension of $v_1(0),...,v_{2n+2}(0)\in \mathcal{V}_0$. Then there are unique
coordinates $t_2,...,t_{n+1}$ on $B_0$ and there is a unique function
$\Psi\in \OO_{B_0}$ which satisfy $\Psi(0)=0$ and 
\begin{eqnarray}\label{2.3}
v_1 = v_1^0 +\sum_{i=2}^{n+1} t_iv_i^0 + 
\sum_{i=2}^{n+1} \frac{\paa\Psi}{\paa t_i}\cdot v^0_{n+i} + 
\left(\sum_{k=2}^{n+1}t_k\frac{\paa}{\paa t_k}-2\right)(\Psi)\cdot v^0_{2n+2}.
\end{eqnarray}
They also satisfy for $i,j,k\in \{2,...,n+1\}$ and $a\in\{n+2,...,2n+1\}$
\begin{eqnarray}\label{2.4}
\nnn_{\frac{\paa}{\paa t_i}} v_1&=&v_i,\\
v_i&=& v_i^0+\sum_j\frac{\paa^2\Psi}{\paa t_i\paa t_j}v_{n+j}^0
+\left(\sum_kt_k\frac{\paa}{\paa t_k}-1\right)
(\frac{\paa\Psi}{\paa t_i})\cdot v_{2n+2}^0,\label{2.5}\\
\nnn_{\frac{\paa}{\paa t_i}} v_j&=&
\sum_k\frac{\paa^3\Psi}{\paa t_i\paa t_j\paa t_k}v_{n+k},\label{2.6}\\
v_a&=&v_a^0+t_{a-n}\cdot v_{2n+2}^0,\label{2.7}\\
\nnn_{\frac{\paa}{\paa t_i}} v_a &=& \delta_{i+n,a}\cdot v_{2n+2},\label{2.8}\\
v_{2n+2}&=& v_{2n+2}^0,\label{2.9}\\ 
\nnn_{\frac{\paa}{\paa t_i}} v_{2n+2}&=&0.\label{2.10}
\end{eqnarray}
The function $\Psi$ and the flat structure on $B_0$ from the coordinates
$t_2,...,t_{n+1}$ depend only on the choice of $U_\bullet$ and $v_1(0)$.
\end{lemma}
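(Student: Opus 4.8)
The plan is to compare the chosen $\nabla^U$-flat frame $v_1,\dots,v_{2n+2}$ with the $\nabla$-flat frame $v_1^0,\dots,v_{2n+2}^0$ through the change-of-frame matrix $P$ defined by $v_j=\sum_k P_{kj}v_k^0$ with $P(0)=\mathrm{id}$, and to recover every coefficient $c_j$ in the expansion $v_1=\sum_j c_j v_j^0$ as a ``period'' $\pm S(v_1,v_j^0)$. First I would use $\nabla=\nabla^U+C^U$ and $\nabla^U v_j=0$ to get $\nabla v_j=C^U v_j$, so that the connection form $A=P^{-1}\,dP$ of $\nabla$ in the moving frame equals the matrix of $C^U$ and is therefore strictly block-subdiagonal for the grading of sizes $1,n,n,1$ given by $\mathcal{V}=\bigoplus_p F^p\cap U_p$. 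Integrating $dP=PA$ from $P(0)=\mathrm{id}$ shows $P$ is block lower-triangular with identity diagonal blocks, which immediately produces the \emph{shapes} of (2.3), (2.5), (2.7), (2.9); in particular $C^U v_{2n+2}\in (F^{-1}\cap U_{-1})\otimes\Omega^1_{B_0}=0$ gives $v_{2n+2}=v_{2n+2}^0$ and (2.10), while pairing with $v_{2n+2}^0$ gives $c_1=1$.

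Next I would build the coordinates and the prepotential. Since $S$ has the same constant Gram matrix in both frames (it is $\nabla$- and $\nabla^U$-flat), the relations (2.2) together with the orthogonality (1.12) and the already-established $v_{n+j}=v_{n+j}^0+(\ast)v_{2n+2}^0$ give $S(v_i,v_{n+j}^0)=\delta_{ij}$ for $i,j\in\{2,\dots,n+1\}$. Setting $t_i:=S(v_1,v_{n+i}^0)$, flatness of $v_{n+i}^0$ yields $X t_i=S(C^U_X v_1,v_{n+i}^0)$, i.e.\ $t_i$ reads off the block-$2$ coordinate of $C^U_X v_1$; the CY-condition (1.8) makes $X\mapsto C^U_X v_1$ an isomorphism onto $F^2\cap U_2$, so the $dt_i$ are independent, $t_2,\dots,t_{n+1}$ are coordinates, and $\nabla_{\partial/\partial t_i}v_1=v_i$, which is (2.4). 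For the prepotential I would use that $F^2\cap U_2$ is $S$-isotropic (by (1.12), as $2+2\neq 3$), so $S(\nabla_X v_1,\nabla_Y v_1)=0$; expanding in the flat frame and using $c_1=1$ turns this into $\sum_i dt_i\wedge d c_{n+i}=0$, whence $\sum_i c_{n+i}\,dt_i$ is closed and, on the contractible germ $B_0$, equals $d\Psi$ for a unique $\Psi$ with $\Psi(0)=0$. This yields $c_{n+i}=\partial\Psi/\partial t_i$ and pins down $\Psi$ uniquely.

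For the last coefficient I would invoke $S(F^3,F^2)=0$ from (1.4), so that $S(v_1,\nabla_X v_1)=0$; expanding gives $X c_{2n+2}=\sum_i\big(t_i\,X(\partial_i\Psi)-(\partial_i\Psi)\,X t_i\big)$, whose unique primitive vanishing at $0$ is $c_{2n+2}=\big(\sum_k t_k\,\partial/\partial t_k-2\big)\Psi$, the integration constant being forced to vanish by $c_{2n+2}(0)=S(v_1,v_1)=0$. This completes (2.3). Reinserting $c_i=t_i$, $c_{n+i}=\partial_i\Psi$ and this $c_{2n+2}$ into the block entries of $P$ and differentiating gives the remaining formulas (2.5)--(2.8), with the symmetry $\partial_i\partial_j\Psi=\partial_j\partial_i\Psi$ producing the symmetric third-derivative Higgs field of (2.6). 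I expect the main obstacle to be conceptual rather than computational: recognising that the \emph{single} isotropy identity $S(\nabla v_1,\nabla v_1)=0$ is precisely the integrability condition manufacturing $\Psi$, and that $S(v_1,\nabla v_1)=0$ separately forces the homogeneous Euler combination, so that one never confronts the full curvature equation $dA+A\wedge A=0$.

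Finally, for the independence of $\Psi$ and the flat structure: fixing $U_\bullet$ and $v_1(0)$ determines $v_1$ as the $\nabla^U$-flat extension, and nondegeneracy of $S$ on $(F^3\cap U_3)\times U_0$ forces $v_{2n+2}$ as well. The only residual freedom allowed by (2.1)--(2.2) is a constant $g\in\mathrm{GL}_n$ acting on the block $F^2\cap U_2$, with $F^1\cap U_1$ forced to transform by $(g^{-1})^{T}$ to preserve (2.2). Tracking this through the period formulas, $t_i\mapsto\sum_j(g^{-1})_{ij}t_j$ is a constant linear change of flat coordinates, so the flat (affine) structure is unchanged, while $\partial\Psi/\partial t_k$ is invariant and $\Psi(0)=0$, so $\Psi$ is literally the same function on $B_0$. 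Identifying this residual gauge group and checking the invariance is, I expect, the most delicate bookkeeping, though it is routine once the period description is in hand.
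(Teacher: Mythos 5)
Your overall route is essentially the paper's, in slightly different dress: the decomposition $\nabla=\nabla^U+C^U$ with $C^U$ lowering the grading by exactly one, the resulting triangular shape of the transition matrix $P$, the pairing identity $S(\nabla_Xv_1,\nabla_Yv_1)=0$ (which is the paper's $0=S(v_i,v_j)$) producing the closed one-form $\sum_i c_{n+i}\,dt_i=d\Psi$, and the residual $GL(n,\C)$-gauge argument for independence of choices all coincide with the paper's proof. One step of yours is genuinely cleaner: you obtain $c_{2n+2}=\bigl(\sum_k t_k\partial_k-2\bigr)\Psi$ in a single integration from $S(v_1,\nabla_Xv_1)=0$ (i.e. from $S(F^3,F^2)=0$), whereas the paper first needs (2.7), then compares two expansions of $\nabla_{\partial_i}v_j$ to get only the second derivatives $\partial_i\partial_j\kappa_{2n+2}$, and integrates twice with several boundary-condition checks.

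However, there is a genuine gap at (2.7)--(2.8). These formulas determine the entry $h_a$ in $v_a=v_a^0+h_a\,v_{2n+2}^0$, i.e. a column of $P$ other than the $v_1$-column, and your stated method --- ``reinserting $c_i=t_i$, $c_{n+i}=\partial_i\Psi$, $c_{2n+2}$ into the block entries of $P$ and differentiating'' --- cannot produce it: differentiating the $v_1$-column of $dP=PA$ yields (2.5) and the $(2\to 1)$-block of $A$ (hence (2.6)), but the $(1\to 0)$-block of $A$, equivalently $dh_a$, is a priori an independent unknown that no amount of differentiating the expansion of $v_1$ can reach. To identify $h_a=t_{a-n}$ you need one more application of the flatness of $S$: differentiate $S(v_1,v_a)=0$ to get $0=S(v_i,v_a)+S(v_1,\nabla_{\partial_i}v_a)=\delta_{i+n,a}-\partial_i h_a$ (this is exactly the paper's argument; equivalently, use the skew-adjointness $S(C^U_Xa,b)=-S(a,C^U_Xb)$ to transport your known $(3\to 2)$-block $dt_j$ to the $(1\to 0)$-block), and then integrate using $h_a(0)=0$. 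This is fixable with precisely the tools you already use everywhere else, but as written two of the seven claimed identities are not proved. A final small point: in the independence argument, $\partial\Psi/\partial t_k$ is not invariant under the residual $A\in GL(n,\C)$ (the partials transform as components of a one-form); what is invariant is $d\Psi=\sum_k(\partial\Psi/\partial t_k)\,dt_k$, which together with $\Psi(0)=0$ gives $\widetilde\Psi=\Psi$, as in the paper.
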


\begin{proof}
Here and later the following convention for indices will be used:
\begin{eqnarray}\label{2.11}
i,j,k\in \{2,...,n+1\}, a,b\in\{n+2,...,2n+1\},
\alpha\in \{1,...,2n+2\}.
\end{eqnarray}
Define $p_1=3,p_i=2,p_a=1,p_{2n+2}=0$, 
then $v_\alpha\in \OO(F^{p_\alpha}\cap  U_{p_\alpha})$.
The sections $v_\alpha$ and $v_\alpha^0$ satisfy the following properties:
\begin{eqnarray}\nonumber
v_\alpha^0&\in& \OO(U_{p_\alpha})\quad (\textup{because }
U_{p_\alpha}\textup{ is }\nnn\textup{-flat}),\\ 
\nnn^Uv_\alpha^0&=&-C^Uv_\alpha^0 \in \OO(U_{p_\alpha-1})\otimes \Omega^1_{B_0},
\nonumber\\
v_\alpha&\equiv& v_\alpha^0\mod \OO(U_{p_\alpha-1}),\label{2.12}\\
\nnn v_\alpha &=& C^U v_\alpha \in 
\OO(F^{p_\alpha-1}\cap U_{p_\alpha-1})\otimes \Omega^1_{B_0}.\label{2.13}
\end{eqnarray}
\eqref{2.12} gives \eqref{2.9} and \eqref{2.10}.
There are unique functions $t_i,\kappa_i,\kappa_{2n+2}\in \OO_{B_0}$
such that 
\begin{eqnarray}\label{2.14}
v_1 = v_1^0 +\sum_{i=2}^{n+1} t_iv_i^0 + 
\sum_{i=2}^{n+1} \kappa_i\cdot v^0_{n+i} + \kappa_{2n+2}\cdot v^0_{2n+2}.
\end{eqnarray}
The CY-condition \eqref{1.7} \& \eqref{1.8} 
shows that $t_2,...,t_{n+1}$ are coordinates on $B_0$.
They are centered at 0 because $v_1(0)=v_1^0$.
From now on denote $\paa_i:=\frac{\paa}{\paa t_i}$.
Derivation of \eqref{2.14} gives
\begin{eqnarray}\label{2.15}
\nnn_{\paa_i}v_1 &=& 
v_i^0+\sum_j\paa_i\kappa_{j}\cdot v_{n+j}^0+
\paa_i\kappa_{2n+2}\cdot v_{2n+2}^0\\
&\equiv& v_i^0 \equiv v_i \mod \OO(U_1).\nonumber
\end{eqnarray}
This and \eqref{2.13} show \eqref{2.4}.

Now we will use two times the pairing $S$, first for 
\eqref{2.7} \& \eqref{2.8}, second for the existence of the function $\Psi$.
Equation \eqref{2.13} shows $\nnn_{\paa_i}v_a\in \OO_{B_0}\cdot v_{2n+2}$.
The coefficient $\delta_{i+n,a}$ in \eqref{2.8}
is determined by
\begin{eqnarray*}
0&=& \paa_i(0) = \paa_i S(v_1,v_a) 
= S(\nnn_{\paa_i} v_1,v_a) + S(v_1,\nnn_{\paa_i}v_a)\\
&=& S(v_i,v_a)+S(v_1,\nnn_{\paa_i}v_a) 
= \delta_{i+n,a}+ S(v_1,\nnn_{\paa_i}v_a)
\end{eqnarray*}
and $S(v_1,v_{2n+2})=-1$. This shows \eqref{2.7} \& \eqref{2.8}.
Condition \eqref{2.2} also holds with $v_\alpha$ replaced by $v_\alpha^0$ 
because $S$ is $\nnn$-flat and $v_\alpha^0(0)=v_\alpha(0)$.
This, together with \eqref{2.4} and \eqref{2.15} give 
\begin{eqnarray*}
0 = S(v_i,v_j) = \paa_j \kappa_i\cdot S(v_i^0,v_{n+i}^0)
+ \paa_i \kappa_j\cdot  S(v_{n+j}^0,v_j^0)
=  \paa_j \kappa_i - \paa_i \kappa_j .
\end{eqnarray*}
Therefore there is a unique function $\Psi\in\OO_{B_0}$
with $\Psi(0)=0$ and 
\begin{eqnarray}\label{2.16}
 \partial_i \Psi= \kappa_i.
\end{eqnarray}
Derivation by $\paa_i$ of \eqref{2.15} and \eqref{2.4} gives
\begin{eqnarray}\label{2.17}
\nnn_{\paa_i}v_j &=& 
\sum_k\paa_i\paa_j\paa_k\Psi\cdot v_{n+k}^0 
+\paa_i\paa_j\kappa_{2n+2}\cdot v_{2n+2}^0\\
&\equiv& \sum_k\paa_i\paa_j\paa_k\Psi\cdot v_{n+k}^0
\equiv \sum_k\paa_i\paa_j\paa_k\Psi\cdot v_{n+k}
\mod \OO(U_0).\nonumber
\end{eqnarray}
This and \eqref{2.13} show \eqref{2.6}.
Using \eqref{2.7} this gives
\begin{eqnarray*}
\nnn_{\paa_i}v_j &=& \sum_k\paa_i\paa_j\paa_k\Psi\cdot v_{n+k}\\
&=& \sum_k\paa_i\paa_j\paa_k\Psi\cdot v_{n+k}^0 
+\sum_k\paa_i\paa_j\paa_k\Psi\cdot t_k\cdot v_{2n+2}^0.
\end{eqnarray*}
With \eqref{2.17} it implies
\begin{eqnarray*}
\paa_i\paa_j\kappa_{2n+2} &=& \sum_k t_k\cdot \paa_i\paa_j\paa_k\Psi\\
&=& \left(\sum_kt_k\paa_k\right)\paa_i\paa_j\Psi 
=\paa_i\paa_j\left(\sum_kt_k\paa_k-2\right)\Psi.
\end{eqnarray*}
We can conclude 
\begin{eqnarray}\label{2.18}
\kappa_{2n+2} = \left(\sum_kt_k\paa_k-2\right)\Psi
\end{eqnarray}
because we know
\begin{eqnarray*}
\left(\paa_i\kappa_{2n+2}\right)(0)&=&0 
\qquad\left[\Leftarrow \quad v_i(0)=v_i^0 \textup{ and }\eqref{2.15}\right],\\
\left(\paa_i\left(\sum_kt_k\paa_k-2\right)\Psi\right)(0)&=&
\left(\left(\sum_kt_k\paa_k-1\right)\paa_i\Psi\right)(0)\\
&=&-\kappa_i(0)=0
\qquad\left[\Leftarrow \quad v_1(0)=v_1^0\right],\\
\kappa_{2n+2}(0)&=&0 
\qquad\left[\Leftarrow \quad v_1(0)=v_1^0\right],\\
\left(\left(\sum_kt_k\paa_k-2\right)\Psi\right)(0)&=&-2\Psi(0)=0.
\end{eqnarray*}
Now all equations \eqref{2.3} - \eqref{2.10} are proved.

In order to show that the function $\Psi$ and the flat structure on $B_0$
from $t_2,...,t_{n+1}$ are independent of the choice 
of $v_\alpha$ except $v_1(0)$, we suppose that a second choice 
$\www v^0_\alpha$ is made with $\www v_1(0)=v_1(0)$.
All its data are denoted using a tilde.
The base change from the base $(v_\alpha)$  
to the base $(\www v_\alpha)$ is constant,
because both bases are flat with respect to $\nnn^U$.
Now  $\www v_1 = v_1$ because both are $\nnn^U$-flat extensions
of $\www v_1(0)=v_1(0)$. 
With $S(v_1,v_{2n+2})=-1=S(\www v_1,\www v_{2n+2})$ we obtain also 
$\www v_{2n+2}=v_{2n+2}$.
Suppose $(\www v_i)= (v_i)\cdot A$, where $i\in\{2,...,n+1\}$
and $(\www v_i), (v_i)$ are row vectors, and $A\in \textup{Gl}(n,\C)$.
Then 
$(t_i) = (\www t_i)\cdot A^{tr}$, 
$(\www\paa_i) = (\paa_i)\cdot A$,
$(\www\kappa_i)=(\kappa_i)\cdot A$ 
and thus $\www\Psi = \Psi$.
This shows the desired independencies of the flat structure and $\Psi$.

In fact, rescaling of $v_1(0)\in F^3_0-\{0\}$ leads to a rescaling
of $\Psi$, but it does not affect the flat structure on $B_0$.
That depends only on $U_\bullet$.
\end{proof}

\begin{theorem}\label{c2.2}
A variation $((B_0,0),\mathcal{V},\nnn,S,F^\bullet)$ of Hodge like
filtrations of weight 3 with pairing
and with CY-condition \eqref{1.7} \& \eqref{1.8}
and $n=\dim B_0$ is fixed.
\begin{itemize}
\item[(a)] Any choice of an opposite filtration $U_\bullet$ and a generator
$\lambda\in F^3_0$
leads in a canonical way (described below in the proof)
to a Frobenius manifold 
$M^{U,\lambda}\supset\C\times B_0$ of dimension
$2n+2$.
It is semihomogeneous with integers
$(p_1,...,p_{2n+2})=(0,1,...,1,2,...,2,3)$ ($1$ and $2$ each $n$ times).
\item[(b)] The manifold $M^{U,\lambda}$ is canonically isomorphic to the manifold
$M=\C\times B_2$ which is constructed in section \ref{c3.6}.
All Frobenius manifolds induce the same unit field $e$, Euler field $E$
and multiplication $\circ$ on $M$, so the same F-manifold structure.
But in the general the metrics and flat structures differ.
\end{itemize}
The unit field is $e=\frac{\paa}{\paa t_1}$ if $t_1$ is the coordinate
on $\C$ of a coordinate system which respects the product $M=\C\times B_2$.
The manifold $B_2$ comes equipped with a projection $p_2:B_2\to  B_0$,
the fibers are isomorphic to $\C^{n+1}$ as affine algebraic manifolds.
The Euler field induces a good $\C^*$-action on the fibers
with weights $(1,...,1,2)$.
\end{theorem}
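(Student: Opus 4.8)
The plan is to establish (a) by writing the Frobenius structure explicitly in the flat coordinates furnished by Lemma~\ref{c2.1}, and then to reduce the canonicity in (b) and the structural assertions to the intrinsic model of section~\ref{c3.6}. First I fix the opposite filtration $U_\bullet$ and put $\lambda=v_1(0)\in F^3_0-\{0\}$, so that Lemma~\ref{c2.1} supplies special coordinates $t_2,\dots,t_{n+1}$ on $B_0$ and a prepotential $\Psi\in\OO_{B_0}$. I take $M^{U,\lambda}$ to be $\C\times B_0\times\C^{n+1}$ with global coordinates $t_1$ (on $\C$), $t_2,\dots,t_{n+1}$ (pulled back from $B_0$) and $t_{n+2},\dots,t_{2n+2}$ (on $\C^{n+1}$), so $\dim M^{U,\lambda}=2n+2$ and $\C\times B_0=\{t_{n+2}=\dots=t_{2n+2}=0\}$. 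On it I install the constant metric $g$ dual to $S$ (equivalently $g^U$), whose only nonzero entries are $g_{1,2n+2}=1$ and $g_{i,n+i}=1$ for $i\in\{2,\dots,n+1\}$; the flat unit $e=\partial/\partial t_1$; the Euler field $E=\sum_\alpha(1-p_\alpha)t_\alpha\,\partial/\partial t_\alpha$ with $(p_\alpha)=(0,1,\dots,1,2,\dots,2,3)$; and the potential
\[
\Phi=\tfrac{1}{2}\,t_1^2\,t_{2n+2}+t_1\sum_{i=2}^{n+1}t_i\,t_{n+i}+\Psi(t_2,\dots,t_{n+1}).
\]

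Writing $\partial_\alpha=\partial/\partial t_\alpha$, I verify the Frobenius axioms. Flatness of $g$ and of $e$ is immediate in flat coordinates, and $\Lie_E(g)=-g=(2-w)\cdot g$ follows from $(1-p_\beta)+(1-p_\gamma)=-1$ whenever $g_{\beta\gamma}\neq0$. The heart of the matter is the third-derivative tensor $c_{\alpha\beta\gamma}=\partial_\alpha\partial_\beta\partial_\gamma\Phi$: the $t_1$-linear terms give $c_{1\beta\gamma}=g_{\beta\gamma}$, so $e$ is the identity, while the only other nonzero coefficients are $c_{ijk}=\partial_i\partial_j\partial_k\Psi$ for $i,j,k\in\{2,\dots,n+1\}$. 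Thus the product strictly raises weight, $\partial_i\circ\partial_j=\sum_k(\partial_i\partial_j\partial_k\Psi)\,\partial_{n+k}$, $\partial_i\circ\partial_{n+j}=\delta_{ij}\,\partial_{2n+2}$, and every product of two vectors of weight $\ge2$ vanishes. Associativity is then \emph{automatic}: both $(\partial_i\circ\partial_j)\circ\partial_k$ and $\partial_i\circ(\partial_j\circ\partial_k)$ equal $(\partial_i\partial_j\partial_k\Psi)\,\partial_{2n+2}$ by the symmetry of the third derivatives of $\Psi$, and all remaining triple products are zero on both sides; in particular no WDVV condition is imposed, which is why $\Psi$ may be arbitrary (consistent with Lemma~\ref{c2.4}). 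I also check that this product reproduces $C^U$ via \eqref{2.4}--\eqref{2.6} under $\partial_{t_\alpha}\leftrightarrow v_\alpha^0$ and $\nnn^g\leftrightarrow\nnn^U$, which is what ties $M^{U,\lambda}$ to the variation and prepares the comparison with special geometry.

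For the homogeneity statements, note that $E(t_i)=0$ for $i\in\{2,\dots,n+1\}$ and $\Psi$ depends only on these, so $\Phi$ is $E$-homogeneous of weight $0=3-w$; hence the structure is semihomogeneous with the stated integers, which are unique because $p_\alpha-1$ are the eigenvalues of the flat endomorphism $\nnn^g_\bullet E$. The splitting $M^{U,\lambda}=\C\times B_2$ is read off from $e=\partial/\partial t_1$: the $\C$ is the unit direction and $B_2$ carries $t_2,\dots,t_{2n+2}$, with $p_2:B_2\to B_0$ the projection onto $(t_2,\dots,t_{n+1})$ and fibres $\C^{n+1}=\{(t_{n+2},\dots,t_{2n+2})\}$; since $\Phi$ is affine-linear in each of $t_{n+2},\dots,t_{2n+2}$, the structure extends over the whole fibre. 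Restricting the flow of $-E$ to a fibre gives the $\C^*$-action scaling $t_{n+2},\dots,t_{2n+1}$ with weight $1$ and $t_{2n+2}$ with weight $2$, i.e. weights $(1,\dots,1,2)$; all weights being positive, the action is good and the fibres are affine algebraic.

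Finally, for the canonicity in (b) I would not argue in coordinates but defer to section~\ref{c3.6}, where $M$, $\circ$, $e$ and $E$ are built directly from the CY-variation --- using only that $C_\bullet\lambda_0$ is an isomorphism in \eqref{1.8} --- without any reference to $U_\bullet$ or to the metric, and where each pair $(U_\bullet,\lambda)$ is matched to this model by a canonical isomorphism. The point is that rescaling $\lambda$ only rescales $\Psi$ and changing the $\nnn^U$-flat frame only changes the flat coordinates $t_\bullet$, so $g$ and $\nnn^g$ vary while the product, unit and Euler field do not. I expect the genuine difficulty of the theorem to lie exactly here, in (b): producing an intrinsic description of $(M,\circ,e,E)$ that manifestly does not see the opposite filtration and checking that all the $U$-dependent models glue to it by F-manifold isomorphisms. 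Part (a) itself, once the potential is written down, is a verification in which --- remarkably --- associativity costs nothing.
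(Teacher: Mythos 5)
Your proposal is correct and follows essentially the same route as the paper: the same explicit model $\C\times B_0\times\C^{n+1}$ with the potential \eqref{2.19}, metric \eqref{2.20}, Euler field \eqref{2.22}, the multiplication \eqref{2.21} with associativity coming for free from the symmetry of $\paa_i\paa_j\paa_k\Psi$, and part (b) deferred to the (TEP)-structure construction of section \ref{c3.6}, exactly as the paper does. The only cosmetic difference is that you phrase the metric intrinsically as $g^U$ transported along $\paa_\alpha\leftrightarrow v_\alpha^0$, which also conveniently subsumes the paper's explicit check that a change of $\nnn^U$-flat frame (the matrix $A$) leaves $g$, $\Phi$ and $E$ unchanged.
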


Part (b) will follow from the results in section \ref{c3.6}.

\begin{proof}[Proof of part (a)]
All the data in lemma \ref{c2.1} will be used, and also the convention
\eqref{2.11}. The section $v_1$ is chosen such that $v_1(0)=\lambda$.
Define $M^{U,\lambda}=\C\times B_0\times \C^{n+1}$ with coordinates
$(t_1,...,t_{2n+2})$ and the flat connection
defined by these coordinates. Here $(t_2,...,t_{n+1})$ extend the
coordinates on $B_0$ from lemma \ref{c2.1}. 
Denote $\paa_\alpha=\frac{\paa}{\paa t_\alpha}$ for $\alpha=1,...,2n+2$.
Define a potential
\begin{eqnarray}\label{2.19}
\Phi(t_1,...,t_{2n+2}) := \Psi(t_2,...,t_{n+1}) + 
\frac{1}{2}t_1^2t_{2n+2} + t_1\sum_{i=2}^{n+1}t_it_{n+i}.
\end{eqnarray}
Define a symmetric nondegenerate flat bilinear form $g$ on $TM^{U,\lambda}$ by 
\begin{eqnarray}\label{2.20}
\begin{matrix}
g(\paa_1,\paa_\alpha) = \delta_{\alpha,2n+2},& & 
g(\paa_i,\paa_\alpha)=\delta_{i+n,\alpha},\\
g(\paa_{n+i},\paa_\alpha)=\delta_{i,\alpha}, & &
g(\paa_{2n+2},\paa_\alpha)=\delta_{\alpha 1}.
\end{matrix}
\end{eqnarray}
Then $\Phi$, $g$ and formula \eqref{1.17} give the following 
multiplication $\circ$ on $T_M^{U,\lambda}$.
\begin{eqnarray}\nonumber
\paa_1\circ = \id;\quad 
\paa_i\circ\paa_j =\sum_k \paa_i\paa_j\paa_k\Psi \cdot \paa_{n+k},\quad
\paa_i\circ\paa_{a}=\delta_{i+n,a}\cdot \paa_{2n+2},\\ \label{2.21}
\paa_i\circ \paa_{2n+2}=0,\quad \paa_{a}\circ\paa_{b}=0,\quad 
\paa_{a}\circ \paa_{2n+2}=0,\quad
\paa_{2n+2}\circ \paa_{2n+2}=0.
\end{eqnarray}
So, it respects the grading 
$\bigoplus_{p=0}^3 (\bigoplus_{p_\alpha=p}\OO_M^{U,\lambda}\cdot\paa_\alpha)$ of $T_M^{U,\lambda}$,
and the multiplication coefficients depend at most on $t_2,...,t_{n+1}$. 
We claim that it is commutative and associative.
Commutativity of this multiplication is clear.
The only nontrivial part of the associativity is given by
\begin{eqnarray}\nonumber
g((\paa_i\circ \paa_j)\circ\paa_k,\paa_1) &=& 
g(\paa_i\circ\paa_j,\paa_k) = \Psi_{ijk} = \Psi_{jki} \\ \nonumber
&=& g(\paa_j\circ\paa_k,\paa_i) = 
g(\paa_i\circ ( \paa_j\circ\paa_k),\paa_1).
\end{eqnarray}
Define the Euler field $E$ as 
\begin{eqnarray}\label{2.22}
E = \sum_{\alpha=1}^{2n+2}(1-p_\alpha)\cdot t_\alpha\frac{\paa}{\paa t_\alpha}.
\end{eqnarray}
Then $E\Phi = E\Psi=0$, $\Lie_E\paa_\alpha = (p_\alpha-1)\paa_\alpha$,
and $\Lie_E(\circ)=\circ$ and $\Lie_E(g)=(2-3)g$ follow
immediately. Thus $(M^{U,\lambda},\circ,e,E,g)$ is a semihomogeneous 
Frobenius manifold with semihomogeneous coordinates $(t_1,...,t_{2n+2})$.

In order to show that this Frobenius manifold is independent of the choice 
of $v^0_\alpha$ except $v_1(0)=\lambda$, 
we continue the argument from the end of the proof of lemma \ref{c2.1},
with the same second choice $\www v_\alpha$ and the same matrix $A$.
Define the isomorphism $\www M\to M^{U,\lambda}$ by 
$\www t_1=t_1,$ $\www t_{2n+2}=t_{2n+2}$, 
$(\www t_{n+i}) = (t_{n+i})\cdot A$.
Then $\www g = g$, $\www\Phi = \Phi$, $\www E = E$,
so one obtains the same Frobenius manifold.
\end{proof}

\begin{remark}\label{c2.3}
Different choices in lemma \ref{c2.1} and theorem \ref{c2.2}
lead to different coordinate systems on $B_0$ and on $M$
and different flat structures. The coordinate changes are 
very complicated when $U_1$ is changed. Trying to prove theorem \ref{c2.2} (b)
by controlling these coordinate changes looks hard.

But when $U_1$ is fixed and only $U_0$ and $U_2=(U_0)^{\perp_S}$ are
changed, the coordinate changes are much simpler.
This is addressed in section \ref{c6.2} which shows that all the 
coordinates and functions $\Psi$ for fixed $U_1$ and varying $U_0$ \& $U_2$
have a nice common origin from projective special geometry.
We now sketch the common origin of the Frobenius algebra at the level of tangent spaces:
consider
\[
\Gr\left( F_0^\bullet \right)= F_0^3 \oplus \left( F_0^2 / F_0^3 \right) \oplus \left( F_0^1 / F_0^2 \right) \oplus \left( F_0^0 / F_0^1 \right) 
\]
The pairing $S$ on $\mathcal{V}$ induces a bilinear form on $\Gr\left( F_0^\bullet \right)$ whose symmetrization $g$ (the construction is similar to $g^U$ in \eqref{1.13}) gives the pairing of the Frobenius algebra. In order to define the multiplication we recall the definition of the Higgs field $C$ and the isomorphism $T_0 B_0 \cong F_0^2 / F_0^3$ resulting from the Calabi-Yau condition together with a choice of nonzero $\lambda_0 \in F^3_0$.

We define the following commutative, associative, unital and graded multiplication $\circ$ on $\Gr\left( F_0^\bullet \right)$:
\begin{equation*}
\begin{array}{rclrlll}
\lambda_0                 &\circ &  V &=& V  &   \forall V \in \Gr\left( F_0^\bullet \right) \\
C_X \lambda_0         & \circ & C_Y\lambda_0   &=& C_XC_Y\lambda_0 & \forall X,Y \in  T_0B_0 \\
C_X \lambda_0         & \circ & W   &=& C_X W   &      \forall X\in T_0B_0,\; W \in F_0^1/F_0^2
\end{array}
\end{equation*}
Other multiplications are zero unless they are required for commutativity.
Associativity (and commutativity) of $\circ$ follows immediately from the fact that the Higgs field gives commuting endomorphisms: for instance, associativity follows from
\[
C_X \left( C_YC_Z \lambda_0 \right) = C_Z \left( C_X C_Y \lambda_0 \right)
\]
Together with $g$, this multiplication gives a Frobenius algebra which is to be compared with the one given in \eqref{2.20}, \eqref{2.21}.
A different choice of $\lambda_0$ simply gives a rescaling of the multiplication.

Given the part $U_1$ of an opposite filtration, one finds
\[
\Gr\left( F_0^\bullet \right) \cong F_0^3 \oplus F_0^2/F_0^3 \oplus U_1
\]
and the multiplication $\circ$ and bilinear form $g$ can be transferred to the right hand side.
It is possible to identify this space with the tangent space of a manifold, in the following way.
Consider the holomorphic vector bundle $U_1 \to B_0$. 
Using the line bundle $\rho:F^3 \to B_0$ we can pull back this bundle to $\rho^*U_1 \to F^3$. This gives the isomorphism
\[
T_{((0,c),v)}  \rho^*U_1 \cong F^3_0 \oplus T_0B_0 \oplus U_1 \cong F_0^3 \oplus F_0^2/F_0^3 \oplus U_1
\]
where $c \in F^3_0, v\in U_1$. 
So we can view $\Gr\left( F_0^\bullet \right)$ as a tangent space to (the total space of) $\rho^*U_1$.
It is true that refining $U_1$ to a full opposite filtration $U_\bullet$ allows one to use the bilinear form $g^U$ together with $\circ$ to define a Frobenius manifold structure on $\rho^*U_1$.
However, from these considerations it does not follow that all these manifolds are isomorphic as $F$-manifolds to one and the same $M$.
This is the subject of section \ref{c3}.
\end{remark}
\begin{lemma}\label{c2.4}
Let $(B_0,0)$ be a germ of a manifold with coordinates $t_2,...,t_{n+1}$
centered at 0, i.e. $t_2(0)=...=t_{n+1}(0)$,
and let $\Psi\in \OO_{B_0}$ be an arbitrary function with $\Psi(0)=0$.
Furthermore, let $\mathcal{V}\to B_0$ be a holomorphic vector bundle with two bases 
$v_1,...,v_{2n+2}$ and $v_1^0,...,v_{2n+2}^0$ of sections which are related
by \eqref{2.3}, \eqref{2.5}, \eqref{2.7} and \eqref{2.9}.
\begin{itemize}
\item[(a)] Let $\nnn$ be the unique flat connection on $\mathcal{V}$ with flat sections
$v_1^0,...,v_{2n+2}^0$. Then \eqref{2.4}, \eqref{2.6}, \eqref{2.8}
and \eqref{2.10} hold.
\item[(b)] Define two filtrations $F^\bullet$ and $U_\bullet$ on $\mathcal{V}$ by \eqref{2.1}
and an antisymmetric pairing $S$ by \eqref{2.2} and
\begin{equation}\label{2.23}
\begin{split}
S(v_\alpha,v_\beta)=0\quad\textup{for }
(\alpha,\beta)\notin&\{(1,2n+2),(2n+2,1)\}\\
&\cup\{(i,i+n),(i+n,i)\ |\ 
i=2,...,n+1\}.
\end{split}
\end{equation}
Then $((B_0,0),\mathcal{V},\nnn,S,F^\bullet)$ is a Hodge like filtration with 
pairing of weight $w=3$ and with CY-condition \eqref{1.7} \& \eqref{1.8},
and $U_\bullet$ is an opposite filtration.
\end{itemize}
\end{lemma}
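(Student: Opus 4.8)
The plan is to treat the two parts in order: the differential relations of~(a) come out by direct differentiation, and then every structural statement of~(b) is read off those relations. The one genuinely delicate point is the $\nnn$-flatness of $S$, which I single out as the main obstacle.

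For part (a), the key observation is that the $v_\alpha^0$ are $\nnn$-flat by construction, so $\nnn_{\paa_i}$ acts on a section simply by applying $\paa_i$ to its coefficients in the frame $(v_\beta^0)$. I would therefore differentiate each of the position equations \eqref{2.3}, \eqref{2.5}, \eqref{2.7}, \eqref{2.9}. Then \eqref{2.10} is immediate from \eqref{2.9} and \eqref{2.8} from \eqref{2.7}. For \eqref{2.4} I differentiate \eqref{2.3} and observe that the outcome is literally the right-hand side of \eqref{2.5}; this uses only the Euler-operator identity $\paa_i\big[(\sum_k t_k\paa_k-2)\Psi\big]=(\sum_k t_k\paa_k-1)(\paa_i\Psi)$ already exploited around \eqref{2.18}. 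For \eqref{2.6} I differentiate \eqref{2.5}, obtaining $\sum_k \paa_i\paa_j\paa_k\Psi\cdot v_{n+k}^0$ together with a $v_{2n+2}^0$-term, and use \eqref{2.7} to absorb the latter into $\sum_k \paa_i\paa_j\paa_k\Psi\cdot v_{n+k}$. This is bookkeeping parallel to the proof of Lemma~\ref{c2.1} and presents no difficulty.

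For the structural content of part (b), I first record the filtrations explicitly. With the degrees $p_1=3,\ p_i=2,\ p_a=1,\ p_{2n+2}=0$ of \eqref{2.11}, conditions \eqref{2.1} read $F^p=\langle v_\alpha : p_\alpha\ge p\rangle$ and $U_p=\langle v_\beta : p_\beta\le p\rangle$. The position equations show $v_\alpha\equiv v_\alpha^0$ modulo sections of strictly smaller degree, so the transition matrix is unipotent triangular for the grading; hence $U_p=\langle v_\beta^0 : p_\beta\le p\rangle$ is spanned by $\nnn$-flat sections and is $\nnn$-flat. From this description the ranks in \eqref{1.7} and Griffiths transversality \eqref{1.1} are read directly off \eqref{2.4}, \eqref{2.6}, \eqref{2.8}, \eqref{2.10}; the CY-condition \eqref{1.8} holds because, for $\lambda_0=v_1(0)$, one has $C_{\paa_i}\lambda_0=[\nnn_{\paa_i}v_1]=[v_i]$ and $[v_2],\dots,[v_{n+1}]$ form a basis of $F^2_0/F^3_0$ (the general generator follows by scaling). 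The splitting \eqref{1.10}, $\mathcal{V}=F^p\oplus U_{p-1}$, holds because $\{p_\alpha\ge p\}$ and $\{p_\beta\le p-1\}$ partition the index set. Finally, the only nonzero entries of $S$ in \eqref{2.2}, \eqref{2.23} pair indices with $p_\alpha+p_\beta=3=w$, so the degree count $p+(4-p)>w$ yields \eqref{1.4} and $p+(2-p)<w$ yields \eqref{1.11}.

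The genuine obstacle is the $\nnn$-flatness of $S$; its antisymmetry and nondegeneracy are immediate from the block structure of \eqref{2.2}, \eqref{2.23}. Since $S(v_\alpha,v_\beta)$ is constant by definition, $\nnn$-flatness amounts to verifying $S(\nnn_{\paa_i}v_\alpha,v_\beta)+S(v_\alpha,\nnn_{\paa_i}v_\beta)=0$ for all $\alpha,\beta$, which I would check case by case using the relations of part~(a). All pairs vanish trivially except two. For $(\alpha,\beta)=(1,n+j)$ the term $S(\nnn_{\paa_i}v_1,v_{n+j})=S(v_i,v_{n+j})=\delta_{ij}$ is cancelled by $S(v_1,\nnn_{\paa_i}v_{n+j})=S(v_1,\delta_{ij}v_{2n+2})=-\delta_{ij}$, which is exactly where the normalization $S(v_1,v_{2n+2})=-1$ enters. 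For $(\alpha,\beta)=(j,k)$ the two terms are $-\paa_i\paa_j\paa_k\Psi$ and $+\paa_i\paa_k\paa_j\Psi$, cancelling by symmetry of the third derivatives of $\Psi$. These two cancellations are the real content of the lemma: they show that the position equations are engineered precisely so that $S$ stays flat, and they are the mirror image of the symmetry $\paa_j\kappa_i=\paa_i\kappa_j$ used in the proof of Lemma~\ref{c2.1}.
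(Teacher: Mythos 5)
Your proof is correct, and its skeleton coincides with the paper's: part (a) by differentiating the position equations \eqref{2.3}, \eqref{2.5}, \eqref{2.7}, \eqref{2.9} in the flat frame, and part (b) by reading Griffiths transversality, the ranks in \eqref{1.7}, the CY-condition \eqref{1.8}, the splittings \eqref{1.10} and the orthogonality conditions \eqref{1.4}, \eqref{1.11} off the degree grading of the indices. The one place where you genuinely diverge is the step you yourself single out, the $\nnn$-flatness of $S$. The paper's argument there is purely algebraic: it substitutes the position equations into the defining values \eqref{2.2}, \eqref{2.23} and concludes that the same constant values hold with $v_\alpha^0$ in place of $v_\alpha$; since $(v_\alpha^0)$ is a $\nnn$-flat frame, a pairing whose matrix is constant in that frame is automatically flat. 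You instead verify the infinitesimal parallelism condition $S(\nnn_{\paa_i}v_\alpha,v_\beta)+S(v_\alpha,\nnn_{\paa_i}v_\beta)=0$ on the frame $(v_\alpha)$ using the derivative relations of part (a); this is legitimate because $\nnn S$ is $\OO_{B_0}$-bilinear, so testing it on a frame suffices. The two routes buy slightly different things: the paper's reduces flatness to a change-of-frame identity requiring no differentiation beyond part (a), whereas yours makes explicit exactly where the two nontrivial inputs enter --- the normalization $S(v_1,v_{2n+2})=-1$ cancelling against $S(v_i,v_{n+j})=\delta_{ij}$, and the symmetry of the third derivatives $\paa_i\paa_j\paa_k\Psi$ --- which are precisely the cancellations hidden inside the paper's frame computation (there they appear as symmetry of second derivatives and the unipotent structure of the transition matrix). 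Both arguments are complete and correct.
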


\begin{proof}
Again we use the convention \eqref{2.11} and write 
$\paa_i=\frac{\paa}{\paa t_i}$. 

(a) \eqref{2.4}, \eqref{2.8} and \eqref{2.10} are obvious,
\eqref{2.6} follows from
\begin{eqnarray*}
\nnn_{\paa_i}v_j &=&\sum_k\paa_i\paa_j\paa_k\Psi\cdot v_{n+k}^0 
+ (\sum_kt_k\paa_k)\paa_i\paa_j\Psi\cdot v_{2n+2}^0\\
&=& \sum_k \paa_i\paa_j\paa_k\Psi\cdot
(v_{n+k}^0+t_k\cdot v_{2n+2}^0) 
=\sum_k\paa_i\paa_j\paa_k\Psi\cdot v_{n+k}.
\end{eqnarray*}

(b) $\nnn$ and $F^\bullet$ satisfy Griffiths transversality \eqref{1.1}
because of \eqref{2.4}, \eqref{2.6}, \eqref{2.8} and \eqref{2.10}.
For the same reason $U_\bullet$ is $\nnn$-flat.
By definition $S$ satisfies \eqref{1.4} and \eqref{1.12}.

The definition of $S$ in \eqref{2.2} and \eqref{2.23} and 
the formulas \eqref{2.3}, \eqref{2.5}, \eqref{2.7} and \eqref{2.9}
show \eqref{2.2} and \eqref{2.23} for $v_\alpha^0$ instead of
$v_\alpha$. Therefore $S$ is $\nnn$-flat.

Finally, the CY-conditions also hold, \eqref{1.7} is built-in,
\eqref{1.8} follows from \eqref{2.4}
\end{proof}

\begin{proposition}\label{c2.5}
Consider the same data as in theorem \ref{c2.2} (a) and the 
Frobenius manifold constructed there in its proof, 
including the additional choice of 
coordinates $(t_1,...,t_{2n+2})$. Consider the group
\begin{eqnarray*}
\textup{Aut}(M,B_0,\circ, e, E)
&:=&\{\varphi:(M,0)\to (M,0)\textup{ biholomorphic}\ |\\
&&\varphi_{|B_0}=\id_{|B_0},\ 
\varphi\textup{ respects multiplication,}\\
&&\textup{ unit field and Euler field}\}
\end{eqnarray*}
of automorphisms of the underlying F-manifold which fix the 
submanifold $B_0$. 
We use the same convention for the indices as in the proof of 
theorem \ref{c2.1} (a):
$$i,j,k\in\{2,...,n+1\}, a,b\in \{n+2,...,2n+1\},
\alpha\in\{1,...,2n+2\}.$$

(a) For any automorphism $\varphi\in \textup{Aut}(M,B_0,\circ,e,E)$
there exist $\beta\in\C^*$ and $\gamma_{ab}\in \OO_{B_0}$ with
\begin{eqnarray}\nonumber 
\varphi_1&=&t_1,\ \varphi_i=t_i,\ \varphi_a=\beta\cdot t_a,\\
\varphi_{2n+2}&=& \beta\cdot t_{2n+2}
+\sum_{a,b}\gamma_{ab}(t_2,...,t_{n+1})\cdot t_at_b
\label{2.24}
\end{eqnarray}
and 
\begin{eqnarray}\label{2.25}
0=(\paa_i\circ \paa_j)(\varphi_{2n+2})
\left[ = (\sum_{k}\paa_i \paa_j \paa_k \Psi\paa_{k+n})(\varphi_{2n+2})\right].
\end{eqnarray}

(b) In the case when all $\paa_i \paa_j \paa_k\Psi=0$ then \eqref{2.25} is empty,
and $\beta$ and the $\gamma_{ab}$ can be chosen freely.

(c) If some $\paa_i \paa_j \paa_k\Psi\neq 0$ then $\beta=1$, but the $\gamma_{ab}$
are only subject to condition \eqref{2.25}.
%
\end{proposition}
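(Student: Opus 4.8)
The plan is to analyze the constraints imposed on $\varphi$ by the requirement that it fix $B_0$ pointwise and respect the unit field, Euler field, and multiplication, exploiting the graded structure $(p_1,\dots,p_{2n+2})=(0,1,\dots,1,2,\dots,2,3)$ established in Theorem \ref{c2.2}(a). First I would record what ``$\varphi_{|B_0}=\id$'' means in coordinates: since $B_0=\{t_1=t_{n+2}=\dots=t_{2n+2}=0\}$ sits inside $M$ with coordinates $t_2,\dots,t_{n+1}$, the hypothesis forces $\varphi_i=t_i$ along $B_0$, and (because $\varphi$ must map the relevant tangent and normal directions compatibly) I expect it forces $\varphi_i=t_i$ identically. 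The condition $\varphi_*e=e$ with $e=\paa_1$ pins down the $t_1$-dependence, and $\varphi_*E=E$ is the crucial grading constraint: writing $E=\sum_\alpha(1-p_\alpha)t_\alpha\paa_\alpha$, compatibility with the $\C^*$-action of weights means each component $\varphi_\alpha$ must be homogeneous of the same weight $1-p_\alpha$ as $t_\alpha$ under the $E$-grading. This homogeneity is what immediately restricts $\varphi_\alpha$ to a polynomial of the allowed shape.

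Next I would carry out the weight bookkeeping component by component. Under the Euler grading, $t_1$ has weight $0$ (an invariant), the $t_i$ have weight $-1$, the $t_a$ have weight $-1$, and $t_{2n+2}$ has weight $-2$; the base coordinates $t_2,\dots,t_{n+1}$ are $E$-invariant in the sense that $E\Psi=0$, so any function of them alone sits in weight $0$. Demanding $\varphi_\alpha$ have the same weight as $t_\alpha$, be holomorphic, and vanish at $0$, I would deduce: $\varphi_1$ is a weight-$0$ expression that must equal $t_1$ after imposing $\varphi_{|B_0}=\id$ and $\varphi_*e=\paa_1$; each $\varphi_i$ of weight $-1$ restricted to fix $B_0$ gives $\varphi_i=t_i$; each $\varphi_a$ of weight $-1$ is an $\OO_{B_0}$-linear combination of the $t_b$'s, and I would use multiplicativity (below) to force it to be the single term $\beta\cdot t_a$ with a common scalar $\beta\in\C^*$; and $\varphi_{2n+2}$, of weight $-2$, is a sum of a linear term in $t_{2n+2}$ with $\OO_{B_0}$-coefficient plus the quadratic terms $\sum_{a,b}\gamma_{ab}(t_2,\dots,t_{n+1})\,t_a t_b$, yielding the shape in \eqref{2.24}.

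The main obstacle, and the heart of part (a), is extracting the multiplicative constraints that both collapse the $\varphi_a$ to the uniform scaling $\beta\cdot t_a$ and produce the differential condition \eqref{2.25}. Here I would use that $\varphi$ respects $\circ$, i.e. $\varphi_*(X\circ Y)=(\varphi_*X)\circ(\varphi_*Y)$, and feed in the explicit products from \eqref{2.21}. The key relations $\paa_i\circ\paa_a=\delta_{i+n,a}\paa_{2n+2}$ and $\paa_i\circ\paa_j=\sum_k\Psi_{ijk}\paa_{n+k}$ are what link the $\varphi_a$'s to each other and to $\varphi_{2n+2}$. Applying $\varphi_*$ to $\paa_i\circ\paa_a=\delta_{i+n,a}\paa_{2n+2}$ and comparing the $\paa_{2n+2}$-components forces the coefficient matrix of the $\varphi_a$ to be a single scalar $\beta$ (this is where the uniformity of $\beta$ across all $a$ comes from, rather than an arbitrary $\Gl(n,\C)$). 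Then applying $\varphi_*$ to $\paa_i\circ\paa_j=\sum_k\Psi_{ijk}\paa_{n+k}$ and pushing the result through the chain rule against the quadratic tail of $\varphi_{2n+2}$ produces precisely $(\paa_i\circ\paa_j)(\varphi_{2n+2})=0$, which is \eqref{2.25}; the bracketed rewriting in \eqref{2.25} is just the substitution of the product formula.

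Finally, parts (b) and (c) are a direct reading of \eqref{2.25}. If all third derivatives $\Psi_{ijk}$ vanish, then $\paa_i\circ\paa_j=0$ for all $i,j$, so \eqref{2.25} holds vacuously; nothing constrains $\beta$ or the $\gamma_{ab}$, giving (b). If some $\Psi_{ijk}\neq0$, I would evaluate \eqref{2.25} on the quadratic tail: since $(\paa_i\circ\paa_j)(\beta t_{2n+2})$ contributes a term proportional to $\beta\cdot\Psi_{ijk}$ while the $\gamma_{ab}t_a t_b$ contributions land in different weight/coordinate strata, matching coefficients forces $\beta=1$ and leaves the $\gamma_{ab}$ free except for the linear PDE system \eqref{2.25}, giving (c). I expect the computation closing off the $\beta=1$ deduction to be the only genuinely delicate point, since it requires tracking which monomials in the $t_a,t_{2n+2}$ survive under the product $\paa_i\circ\paa_j$ and isolating the coefficient that pairs $\beta$ against a nonzero $\Psi_{ijk}$.
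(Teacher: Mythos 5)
Your overall strategy is the same as the paper's: use $\varphi_*e=e$, $\varphi_*E=E$ and $\varphi|_{B_0}=\id$ to pin down the shape of the components $\varphi_\alpha$, then feed the two multiplicativity relations $\varphi_*(\paa_i)\circ\varphi_*(\paa_a)=\varphi_*(\paa_i\circ\paa_a)$ and $\varphi_*(\paa_i)\circ\varphi_*(\paa_j)=\varphi_*(\paa_i\circ\paa_j)$ into the explicit products (2.21). But there is a genuine gap at the heart of part (a): the constancy of $\beta$. After the $e$-, $E$- and $B_0$-conditions, the coefficients are still \emph{functions}: $\varphi_a=\sum_b\beta_{ab}(t_2,\dots,t_{n+1})\,t_b$ and $\varphi_{2n+2}=\beta(t_2,\dots,t_{n+1})\,t_{2n+2}+\sum_{a,b}\gamma_{ab}t_at_b$, which is the paper's (2.26). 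Homogeneity cannot rule this out, precisely because functions of $t_2,\dots,t_{n+1}$ have Euler weight $0$ (your weight bookkeeping is also internally inconsistent: $Et_1=t_1$ and $Et_i=0$, so $t_1$ has weight $1$ and the $t_i$ weight $0$, not $0$ and $-1$). The relation $\paa_i\circ\paa_a=\delta_{i+n,a}\paa_{2n+2}$ yields only $\beta_{ab}=\delta_{ab}\cdot\beta$ with $\beta$ still a function; it does not make $\beta$ a scalar, contrary to what you assert. And the relation $\paa_i\circ\paa_j=\sum_k\Psi_{ijk}\paa_{n+k}$ does not ``produce precisely'' (2.25): comparing $\paa_{2n+2}$-components gives $(\paa_i\circ\paa_j)(\varphi_{2n+2})=t_{i+n}\paa_j(\beta)+t_{j+n}\paa_i(\beta)$ (the paper's (2.31)), while comparing $\paa_{n+k}$-components gives $\Psi_{ijk}\cdot\beta=\Psi_{ijk}$ (the paper's (2.30)). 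Only the subsequent case analysis --- $\beta=1$ from (2.30) when some $\Psi_{ijk}\neq0$, and $\paa_i\beta=0$ from (2.31) when all $\Psi_{ijk}=0$ --- shows $\beta$ is constant and turns (2.31) into (2.25). As written, your derivation of (2.25) silently assumes $\beta$ constant, i.e.\ assumes part of the conclusion.

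Second, your deduction of $\beta=1$ in part (c) rests on a false computation: $(\paa_i\circ\paa_j)(\beta t_{2n+2})=\sum_k\Psi_{ijk}\,\paa_{n+k}(\beta t_{2n+2})=0$, since $\paa_{n+k}$ annihilates both $\beta(t_2,\dots,t_{n+1})$ and $t_{2n+2}$; it is not ``proportional to $\beta\cdot\Psi_{ijk}$''. In fact (2.25) does not involve $\beta$ at all, because $\paa_{n+k}(\varphi_{2n+2})=\sum_b(\gamma_{n+k,b}+\gamma_{b,n+k})t_b$, so no evaluation of (2.25) can ever constrain $\beta$; it constrains only the $\gamma_{ab}$. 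The constraint $\beta=1$ comes solely from the $\paa_{n+k}$-component identity $\Psi_{ijk}\beta=\Psi_{ijk}$, i.e.\ from comparing $\varphi_*(\paa_{n+k})=\beta\,\paa_{n+k}+\paa_{n+k}(\varphi_{2n+2})\,\paa_{2n+2}$ against the untransformed $\paa_{n+k}$ --- an equation your proposal never writes down. To repair the proof you need to record both consequences (2.30) and (2.31) of the second multiplicativity relation and run the two-case argument; with that in place, parts (a), (b), (c) all follow as in the paper.
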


\begin{proof}
Consider an automorphism $\varphi:(M,0)\to (M,0)$. 
The three conditions $\varphi_*(e)=e$, $\varphi_*(E)=E$
and $\varphi_{|B_0}=\id_{|B_0}$ are equivalent to the following:
\begin{eqnarray}\nonumber
\varphi_1&=&t_1,\quad \varphi_i=t_i,\\
\varphi_a&=&\sum_b \beta_{ab}(t_2,...,t_{n+1})\cdot t_b\nonumber\\
&&\hspace*{1cm}
\textup{ for some }\beta_{ab}\in \OO_{B_0}\textup{ with }
\det(\beta_{ab})\in \OO^*_{B_0},\nonumber\\
\varphi_{2n+2}&=&\sum_{a,b}\gamma_{ab}(t_2,...,t_{n+1})\cdot t_at_b
+ \beta(t_2,...,t_{n+1})\cdot t_{2n+2}\nonumber\\
&&\hspace*{1cm}
\textup{ for some }\gamma_{ab}\in \OO_{B_0},\beta\in\OO^*_{B_0}.
\label{2.26}
\end{eqnarray}
Then the coordinate vector fields and their images under $\varphi_*$
satisfy
\begin{eqnarray}\nonumber
\varphi_*(\paa_1)&=&\paa_1,\quad 
\varphi_*(\paa_{2n+2})=\paa_{2n+2}(\varphi_{2n+2})\cdot\paa_{2n+2}
=\beta\cdot \paa_{2n+2},\\
\varphi_*(\paa_a)&=&\sum_b\beta_{ba}\cdot \paa_b 
+ \paa_a(\varphi_{2n+2})\cdot \paa_{2n+2},\nonumber\\
\varphi_*(\paa_i)&=& \paa_i + \sum_a\paa_i(\varphi_a)\cdot \paa_a
+\paa_i(\varphi_{2n+2})\cdot \paa_{2n+2}.
\label{2.27}
\end{eqnarray}
The additional condition that $\varphi$ respects the multiplication
reduces in view of \eqref{2.21} and \eqref{2.27} to the conditions
\begin{eqnarray}\label{2.28}
\varphi_*(\paa_i)\circ\varphi_*(\paa_a)=\varphi_*(\paa_i\circ\paa_a)
\ \textup{ and }\
\varphi_*(\paa_i)\circ\varphi_*(\paa_j)=\varphi_*(\paa_i\circ\paa_j).
\end{eqnarray}
The first one is equivalent to
\begin{eqnarray*}
&&\delta_{i+n,a}\cdot\beta\cdot\paa_{2n+2}
= \delta_{i+n,a}\cdot\varphi_*(\delta_{2n+2}) 
= \varphi_*(\paa_i\circ \paa_a)\\
&=& \varphi_*(\paa_i)\circ\varphi_*(\paa_a) 
=\sum_b\beta_{ba}\cdot\delta_{i+n,b}\cdot \paa_{2n+2}
=\beta_{i+n,a}\cdot \paa_{2n+2}.
\end{eqnarray*}
This is equivalent to $\beta_{ab}=\delta_{ab}\cdot \beta$ and to
\begin{eqnarray}\label{2.29}
\varphi_a=\beta\cdot t_a.
\end{eqnarray}
Taking this into account, the second equation in \eqref{2.28} becomes
\begin{eqnarray*}
&& \sum_k\paa_i \paa_j \paa_k\Psi\cdot \left(\beta\cdot \paa_{k+n}+ 
\paa_{k+n}(\varphi_{2n+2})\cdot\paa_{2n+2}\right) \\
&=& \sum_k\paa_i \paa_j \paa_k\Psi\cdot\varphi_*(\paa_{k+n})
= \varphi_*(\paa_i\circ\paa_j) 
= \varphi_*(\paa_i)\circ\varphi_*(\paa_j) \\
&=& \sum_k\paa_i \paa_j \paa_k\Psi\cdot \paa_{k+n} + 
\sum_a\paa_i\circ \paa_j(\beta)\cdot t_a\paa_a 
+ \sum_a \paa_i(\beta)\cdot t_a\paa_a\circ \paa_j \\
&=& \sum_k\paa_i \paa_j \paa_k\Psi\cdot\paa_{k+n} + 
\left[t_{i+n}\cdot\paa_j(\beta) + t_{j+n}\cdot\paa_i(\beta)\right]
\cdot \paa_{2n+2}.
\end{eqnarray*}
This is equivalent to 
\begin{eqnarray}\label{2.30}
\paa_i \paa_j \paa_k\Psi\cdot \beta =\paa_i \paa_j \paa_k\Psi\qquad \textup{for all }i,j,k
\end{eqnarray}
and 
\begin{eqnarray}\label{2.31}
(\paa_i\circ\paa_j)(\varphi_{2n+2}) 
&&\left[ =\sum_k \paa_i \paa_j \paa_k\Psi\paa_{k+n}(\varphi_{2n+2})\right]
\nonumber \\
&&=t_{i+n}\cdot\paa_j(\beta)+t_{j+n}\cdot\paa_i(\beta).
\end{eqnarray}

{\bf 1st case, all $\paa_i \paa_j \paa_k\Psi=0$:}
then \eqref{2.30} is empty and \eqref{2.31} becomes
$\paa_i(\beta)=0$. In this case $\beta$ is an arbitrary constant in $\C^*$
and $\gamma_{ab}$ are arbitrary.

{\bf 2nd case, some $\paa_i \paa_j \paa_k\Psi\neq 0$:}
then \eqref{2.30} says $\beta =1$. Now \eqref{2.31} becomes
$(\paa_i\circ\paa_j)(\varphi_{2n+2})=0$.
\end{proof}

\section{(TEP)-structures}\label{c3}
\setcounter{equation}{0}

\subsection{Definitions}\label{c3.1}

\noindent
For the proof of theorem \ref{c2.2} (b) we need a datum which is 
between the Frobenius manifold and its F-manifold, namely
the (TEP)-structure on $(\pi^*TM)_{|\C\times M}$ where
$\pi:\pmmm\to M$ is the projection.
We will show that this structure does not depend on 
$(U_\bullet,\lambda)$. 
Then theorem \ref{c2.2} (b) will follow easily.

A {\it (TEP)-structure} of weight $w\in \Z$ consists of data
$(H\to\C\times M,\nnn,S)$. Here $M$ is a complex manifold,
$H\to\C\times M$ is a holomorphic vector bundle,
$\nnn$ is a flat connection on $H_{|\C^*\times M}$ with a pole
of Poincar\'e rank 1 along $\{0\}\times M$,
and $P$ is a $\nnn$-flat $(-1)^w$-symmetric nondegenerate pairing
\begin{eqnarray*}
P: H_{(z,t)}\times H_{(-z,t)}\to \C
\quad \textup{ for }\quad (z,t)\in \C^*\times M
\end{eqnarray*}
which extends with $j:(z,t)\mapsto (-z,t)$ to a nondegenerate pairing
\begin{eqnarray}\label{3.1}
P:\OO(H)\otimes j^*\OO(H)\to z^w\OO_{\C\times M}.
\end{eqnarray}

A {\it (TLEP)-structure} of weight $w\in\Z$ is an extension
of the bundle $H\to\C\times M$ of a (TEP)-structure 
to a holomorphic vector bundle
$\whh H\to\pmmm$ such that the pole along $\immm$ is logarithmic
and such that $P$ extends to an everywhere nondegenerate pairing 
from $\OO(\whh H)\otimes j^*\OO(\whh H)$ to 
$z^w\OO_{\pmmm}$.

A (trTLEP)-structure is a (TLEP)-structure 
such that $\whh H$ is a family of trivial bundles on $\P^1$.

A (TEP)-structure induces a Higgs field
$$C=[z\nnn]:\OO(H_{|\nmmm})\to \OO(H_{|\nmmm})\otimes\oomm^1_M,$$
an endomorphism
$$U=[z^2\nnn_{\paa_z}]:\OO(H_{|\nmmm})\to \OO(H_{|\nmmm})$$
with $[C,U]=0$ 
and a symmetric nondegenerate pairing
$$g=[z^{-w}P]:\OO(H_{|\nmmm})\times\OO(H_{|\nmmm})\to \OO_M$$
with $C^*=C$ and $U^*=U$.

A (trTLEP)-structure is equivalent to a differential 
geometric structure on $H_{|\nmmm}$ containing $C$ and $g$ and 
more data, which is called {\it Frobenius type structure} in 
\cite[ch. 4]{HERT-MANI:2004}, the equivalence is stated in \cite[VI 7]{SABB:2002}, \cite[\S 5.2]{HERT:2003} and \cite[\S 4.2]{HERT-MANI:2004}.

There is a 1-1-correspondence between extensions of (TEP)-structures
to (TLEP)-structures and monodromy invariant filtrations of the 
space
$H^\infty:=\{\textup{global flat multivalued sections in }
H_{|\csmmm}\}$ ($M$ is small and contractible), cf.\ 
\cite[III.1.4]{SABB:2002} and \cite[\S 8.2]{HERT:2002}.
So, obtaining extensions of (TEP)-structures to (TLEP)-structures is easy.
There exist examples (already known by Birkhoff) of (TEP)-structures
such that none of these extensions are (trTLEP)-structures.
But the (TEP)-structures of interest for us have nice extensions
to (trTLEP)-structures.

\subsection{Two examples}\label{c3.2} For the constructions in this paper,  the following
two examples of (TEP)-structures play an important role:

\medskip
 
(i) If $M$ is a Frobenius manifold  with 
$\Lie_E(g)=(2-d)\cdot g$, $d\in\C$, and $\pi:\pmmm\to M$ is the projection,
then $\pi^*T_M$ is canonically equipped with a 
(trTLEP)-structure of (any) weight $w\in\Z$ 
(Dubrovin, Manin, e.g. \cite[ch. 4]{HERT-MANI:2004}):
\begin{eqnarray}\label{3.2}P &:=& z^w\cdot(\id,j)^*g,\\ \label{3.3}
\nnn &:=& \pi^*\nnn^g + \frac{1}{z}C + 
\left(-\frac{1}{z}E\circ -\nnn^g E+\frac{2-d+w}{2}\id\right)
\frac{\ddd z}{z},\\ \nonumber
&& \textup{where }C\textup{ is the Higgs field on }TM\textup{ with }
C_X=X\circ.
\end{eqnarray}
(Compared to \cite[ch. 4]{HERT-MANI:2004}, here we changed the sign in $C_X=X\circ$
and used $-z$ instead of $z$ in \eqref{3.3}, in order to be compatible
with section \ref{c2}. The signs there are chosen to make the 
comparison with projective special geometry in section \ref{c6} smoother.)

\medskip

(ii) Let $(M,\mathcal{V},\nnn^\mathcal{V},S,F^\bullet,w)$ be a variation of Hodge like
filtrations with pairing ($M$ is small and contractible).
Let $\pi_\C:\cmmm\to M$ be the projection and 
$\pi_\C^*\nnn^\mathcal{V}$ be the flat connection on $\pi_\C^*\mathcal{V}$ whose flat
sections are the pull backs of $\nnn^\mathcal{V}$-flat sections in $\mathcal{V}$.
Define a bundle $H\to\cmmm$ with 
$H_{|\csmmm} = \pi_\C^*\mathcal{V}_{|\csmmm}$ by
\begin{eqnarray}\label{3.4}
\OO(H):=\sum_{p\in\Z}z^{w-p}\cdot \OO(\pi_\C^*F^p),
\end{eqnarray}
a flat connection $\nnn$ on $H_{|\csmmm}$ by
\begin{eqnarray}\label{3.5}
\nnn:=\pi_\C^*\nnn^\mathcal{V}
\end{eqnarray}
and a pairing 
$P:(\pi_\C^*\mathcal{V})_{(z,t)}\times (\pi_\C^*\mathcal{V})_{(-z,t)}\to \C$ for $(z,t)\in\csmmm$ by 
\begin{eqnarray}\label{3.6}
P(\pi_\C^*a,\pi_\C^*b):= \frac{1}{(2\pi i)^w}\cdot S(a,b).
\end{eqnarray}
{\bf Claim:} {\it Then $(H\to\cmmm,\nnn,P)$ is a (TEP)-structure
of weight $w$.}

On the one hand, this follows by unwinding the construction behind
corollary 7.14 (b) in \cite{HERT:2003}
(an extra factor $\frac{1}{(2\pi i)^w}$ in \eqref{3.6} makes the 
definitions here compatible with \cite{HERT:2003}).

On the other hand, it can be seen directly as follows.
\eqref{3.4}, \eqref{3.5} and the Griffiths transversality
\eqref{1.1} show
$z\nnn_{\paa_z}\OO(H)\subset\OO(H)$ and $z\nnn_X\OO(H)\subset\OO(H)$ for $X\in T_M$.
This gives the pole of Poincar\'e rank 1 along $\nmmm$
(even $z^2\nnn_{\paa_z}\OO(H)\subset \OO(H)$ would be sufficient).

The conditions \eqref{1.4}, \eqref{3.4}, \eqref{3.6} and the 
nondegenerateness of $S$ show that $P$ maps
$\OO(H)\otimes\OO(H)$ to $z^w\OO_\cmmm$ and that this map
is nondegenerate.
Obviously, $P$ is $(-1)^w$-symmetric and $\nnn$-flat.

\subsection{F-manifolds from (TEP)-structures}\label{c3.3}
There is a construction of Frobenius manifolds from 
meromorphic connections which goes back to the construction
of Frobenius manifolds in singularity theory by M. Saito \cite{SAIT:1989} and 
K. Saito. It is formalized in 
\cite[Th\'eor\`eme VII.3.6]{SABB:2002}\cite{BARA:2002}\cite{BARA:2001} and 
\cite[theorems 4.2 and 4.5]{HERT-MANI:2004}.
In \cite{HERT-MANI:2004} the initial data are a (trTLEP)-structure
with a distinguished section and an isomorphy condition.
The following result gives the construction of a weaker
datum, an F-manifold, from a weaker initial datum, 
a (TEP)-structure with an isomorphy condition.
The proof relies on \cite[4.1]{HERT:2003}.

\begin{theorem}
Let $(H\to \C\times M,\nnn,P)$ be a (TEP)-structure
(actually, the pairing $P$ will not be used) with 
Higgs field $C=[z\nnn]$ and endomorphism 
$U=[z^2\nnn_{\paa_z}]$ on $\OO(H_{|\nmmm})$.
Then $\OO(H_{|\nmmm})$ is a $T_M$-module.
Suppose that the following isomorphy condition holds:
\begin{eqnarray}\label{3.10}
\OO(H_{|\nmmm}) \textup{ is a free }T_M\textup{-module
of rank }1.
\end{eqnarray}
Then there is a unique multiplication $\circ$ on $T_M$
with $C_{X\circ Y}=C_XC_Y$ and a unique unit field $e$.
The multiplication is commutative and associative.
The unit field satisfies $C_e=\id$. There is also a unique vector field
$E$ with $C_E=-U$. The tuple $(M,\circ,e,E)$ is an F-manifold
with Euler field
\end{theorem}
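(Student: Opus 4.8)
The plan is to build everything from the Higgs field $C=[z\nnn]$ and the endomorphism $U=[z^2\nnn_{\paa_z}]$ on $K:=\OO(H_{|\nmmm})$, using a cyclic generator supplied by the isomorphy condition. First I would record, independently of \eqref{3.10}, the two commutation relations that the Poincar\'e rank $1$ pole forces on $K$: namely $[C_X,C_Y]=0$ and $[C_X,U]=0$ for all $X,Y\in T_M$. Both follow by expanding the flatness of $\nnn$ in a local frame. Writing $\nnn_{\paa_i}=\paa_i+\frac1z A_i$ and $\nnn_{\paa_z}=\paa_z+\frac1{z^2}B$ for a coordinate frame $\paa_i$ on $M$, with $A_i=\sum_{k\geq 0}z^kA_{i,k}$, $B=\sum_{k\geq 0}z^kB_k$, $C_{\paa_i}=A_{i,0}$ and $U=B_0$, the vanishing of $[\nnn_{\paa_i},\nnn_{\paa_j}]$ and of $[\nnn_{\paa_i},\nnn_{\paa_z}]$ gives at leading order in $z$ exactly $[C_{\paa_i},C_{\paa_j}]=0$ and $[C_{\paa_i},U]=0$. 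Since the $C_X$ are $\OO_M$-linear and mutually commute, they define an action of $\sym_{\OO_M}T_M$ on $K$; this is the asserted $T_M$-module structure.

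Next, assuming \eqref{3.10}, I would fix a local generator $\zeta$, i.e.\ a section such that $\Phi:T_M\to K$, $X\mapsto C_X\zeta$, is an isomorphism of $\OO_M$-modules. Everything is then defined by transport of structure: set $e:=\Phi^{-1}(\zeta)$ and, for $X,Y\in T_M$, set $X\circ Y:=\Phi^{-1}(C_XC_Y\zeta)$ and $E:=-\Phi^{-1}(U\zeta)$. The commutation relations upgrade the defining identities from ``applied to $\zeta$'' to genuine operator identities: for any $w=C_Z\zeta$ one has $C_ew=C_eC_Z\zeta=C_ZC_e\zeta=C_Z\zeta=w$, so $C_e=\id$; likewise $C_{X\circ Y}C_Z\zeta=C_ZC_{X\circ Y}\zeta=C_ZC_XC_Y\zeta=C_XC_YC_Z\zeta$ gives $C_{X\circ Y}=C_XC_Y$, and $C_EC_Z\zeta=C_ZC_E\zeta=-C_ZU\zeta=-UC_Z\zeta$ gives $C_E=-U$. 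Commutativity, associativity and the unit axiom for $\circ$ then follow from $C_XC_Y=C_YC_X$, from $C_{(X\circ Y)\circ Z}=C_XC_YC_Z=C_{X\circ(Y\circ Z)}$ and from $C_{e\circ X}=C_X$, together with the injectivity of $X\mapsto C_X$ (which holds because this map followed by application to $\zeta$ is the isomorphism $\Phi$). Uniqueness of $\circ$, $e$ and $E$ is immediate: any solution of the defining operator identities, applied to $\zeta$, is forced to equal $\Phi^{-1}$ of the corresponding element.

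It remains to prove that $(M,\circ,e,E)$ is an $F$-manifold, i.e.\ the Hertling--Manin identity \eqref{1.16}, and that $E$ is an Euler field. This is the heart of the matter and where I expect the real work, relying on \cite[4.1]{HERT:2003}. The point is that \eqref{1.16} is a genuine integrability condition that does \emph{not} follow from commutativity and associativity alone; the missing input is the next order of the flatness expansion. The $z^{-1}$-coefficient of $[\nnn_{\paa_i},\nnn_{\paa_j}]=0$ reads
\begin{eqnarray*}
\paa_i C_{\paa_j}-\paa_j C_{\paa_i}+[C_{\paa_i},A_{j,1}]-[C_{\paa_j},A_{i,1}]=0,
\end{eqnarray*}
the potentiality of the Higgs field $C$ with respect to the residue connection with forms $A_{i,1}$. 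I would feed this, together with $C_{X\circ Y}=C_XC_Y$ and $C_e=\id$, into the criterion of \cite[4.1]{HERT:2003} (equivalently the characterisation of \cite{HERT-MANI:1999}) to obtain \eqref{1.16}. For the Euler field, the $z^{1}$-coefficient of $[\nnn_{\paa_i},\nnn_{\paa_z}]=0$ gives
\begin{eqnarray*}
\paa_i U+C_{\paa_i}+[C_{\paa_i},B_1]+[A_{i,1},U]=0,
\end{eqnarray*}
which, combined with $C_E=-U$ and $[C_X,U]=0$, controls $\Lie_E(\circ)$ and yields $\Lie_E(\circ)=\circ$, so that $E$ is an Euler field (of weight one). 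The main obstacle is therefore not the algebra of $\circ$, $e$ and $E$, which is formal, but the passage from the order-$z^{-1}$ potentiality relation to \eqref{1.16}; this is exactly the content of the cited result, and I would either invoke it directly or reprove it by differentiating $C_{X\circ Y}=C_XC_Y$ and using potentiality to cancel the non-tensorial terms.
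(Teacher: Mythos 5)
Your first two steps are correct and coincide with the paper's own argument: the commutation relations $[C_X,C_Y]=0$ and $[C_X,U]=0$ from the leading coefficients of flatness, the definition of $\circ$, $e$, $E$ via a cyclic generator, the upgrade to the operator identities $C_{X\circ Y}=C_XC_Y$, $C_e=\id$, $C_E=-U$, and the uniqueness, commutativity and associativity statements are exactly as in the paper (which follows \cite[lemma 4.1]{HERT:2003} for this part).

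The gap is in your final step, and it is precisely the point where the paper has to do real work. The relation you extract from the $z^{-1}$-coefficient of $[\nnn_{\paa_i},\nnn_{\paa_j}]=0$ is potentiality of $C$ with respect to the connection $D$ whose Christoffel matrices in your chosen frame are the $A_{i,1}$. But this $D$ is neither canonical nor flat. Under a holomorphic change of frame $G(z,t)=G_0+zG_1+\cdots$ the matrix $A_{i,1}$ picks up, besides the usual gauge term $G_0^{-1}\paa_i G_0$, the extra term $G_0^{-1}[C_{\paa_i},G_1G_0^{-1}]G_0$, so $D$ depends on the frame; and the $z^0$-coefficient of the same flatness equation shows that the curvature of $D$ equals
\[
\paa_iA_{j,1}-\paa_jA_{i,1}+[A_{i,1},A_{j,1}]=-[C_{\paa_i},A_{j,2}]+[C_{\paa_j},A_{i,2}],
\]
which has no reason to vanish. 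The criteria you plan to invoke --- \cite[lemma 4.3]{HERT:2003}, or the Hertling--Manin statement that potentiality \emph{together with flatness} implies \eqref{1.16} --- are stated for a \emph{flat} connection $\nnn^r$ with $\nnn^r(C)=0$ (and, for the Euler field, $\nnn^r(U)-[C,V]+C=0$). With a curved, frame-dependent $D$ they simply do not apply, and your fallback of ``differentiating $C_{X\circ Y}=C_XC_Y$ and cancelling the non-tensorial terms'' leaves exactly these uncontrolled curvature terms; the same objection hits your treatment of $\Lie_E(\circ)=\circ$.

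This is why the paper does not argue at $z=0$: it first modifies and extends the (TEP)-structure over $\pmmm$ using \cite[lemma 2.7]{HERT-MANI:2004} --- a Birkhoff-type step, which in general forces an auxiliary logarithmic pole along $\{1\}\times M$ --- so that $\whh H$ becomes a family of trivial bundles on $\P^1$. Only then is there a genuinely flat connection available, namely the residual connection $\nnn^{res}$ at $\immm$ transported to $H_{|\nmmm}$ via \eqref{3.11}; flatness of $\nnn$ then yields $\nnn^{res}(C)=0$ and $\nnn^{res}(U)-[C,V]+C=0$ in the form \eqref{3.12}, and \cite[lemma 4.3]{HERT:2003} applies. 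To repair your proposal you must either supply this extension step, or prove independently that potentiality with respect to some possibly non-flat connection suffices for \eqref{1.16}; neither is in your text, and the second is not what the cited results assert.
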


\begin{proof}
The first part of the proof follows \cite[lemma 4.1]{HERT:2003}.
Locally a section $\xi$ in $H_{|\nmmm}$ is chosen such that
$C_\bullet \xi :TM\to H_{|\nmmm}$ is an isomorphism.
The multiplication $\circ$ and the vector fields $e$ and 
$E$ are defined by
\begin{eqnarray*}
C_{X\circ Y}\xi = C_XC_Y\xi,\quad 
C_e\xi = \xi,\quad C_E\xi = -U\xi.
\end{eqnarray*}
The multiplication is commutative and associative, and
$e$ is a unit field.

Because of 
\begin{eqnarray*}
C_{X\circ Y}C_Z\xi = C_XC_YC_Z\xi,\quad 
C_eC_Z\xi = C_Z\xi,\quad C_EC_Z\xi = -UC_Z\xi
\end{eqnarray*}
the multiplication and the vector fields $e$ and $E$
are independent of the choice of $\xi$ and satisfy
\begin{eqnarray*}
C_{X\circ Y} = C_XC_Y,\quad 
C_e=\id,\quad C_E = -U.
\end{eqnarray*}
The proof that they give an F-manifold with Euler field
will use \cite[lemma 4.3]{HERT:2003}. In order to apply it, it
would be nice to extend the (TEP)-structure to a 
(trTLEP)-structure. That is not always possible,
but by \cite[lemma 2.7]{HERT-MANI:2004} one can change and extend
it (locally in $M$) to the following weaker structure:
A holomorphic vector bundle $\whh H\to\pmmm$ such that
$\whh H_{|(\C-\{1\})\times M} = H_{|(\C-\{1\})\times M}$, 
such that the connection $\nnn$ has logarithmic poles
along $\{1\}\times M$ and $\nmmm$ and such that $\whh H$
is a family of trivial bundles on $\P^1$
(here $M$ is supposed to be small).
Because of the last condition
\begin{eqnarray}\label{3.11}
\OO(\whh H_{|\nmmm})\cong \OO(\whh H_{|\immm})\cong
\pi_*\OO(\whh H)
\end{eqnarray}
and $C$ and $U$ on $\whh H_{|\nmmm}$ as well as the residual
connection $\nnn^{res}$ on $\whh H_{|\immm}$ are shifted 
to the isomorphic sheaves.
There are two further endomorphisms $V$ and $W$ 
(with $V+W=-$ residue endomorphism on $H_{|\immm}$)
such that for fiberwise global sections 
$\sigma\in \pi_*\OO(\whh H)$
\begin{eqnarray}\label{3.12}
\nnn\sigma = \left(\nnn^{res}+\frac{1}{z}C +
(\frac{1}{z}U+V+\frac{z}{z-1}W)\frac{\ddd z}{z}\right)
\sigma.
\end{eqnarray}
The flatness of $\nnn$ yields $\nnn^{res}(C)=0$ and 
$\nnn^{res}(U)-[C,V]+C=0$. 
Therefore lemma 4.3 in \cite{HERT:2003} applies and shows that
$(M,\circ,e,E)$ is an F-manifold with Euler field.
\end{proof}

\subsection{The classifying space $\check D_{PHS}$}\label{c3.4}
For the rest of this section, a variation of Hodge like
filtrations $((B_0,0),\mathcal{V},\nnn,S,F^\bullet)$ of weight 
$w=3$ with pairing and CY-condition 
\eqref{1.7} \& \eqref{1.8} is fixed.
$B_0$ is a (sufficiently small) representative of a 
germ $(B_0,0)$ of a manifold of dimension $n$. For $b\in B_0$
the filtration is denoted $F^\bullet_b$. By abuse of notation
we also denote its $\nnn$-flat shift to the fiber
$\mathcal{V}_0$ by $F^\bullet_b$.

There is a classifying space $\check D_{PHS}$ for all
Hodge like filtrations with the same discrete data as
$F^\bullet_0$,
\begin{equation}\label{3.13}
\begin{split}
\check D_{PHS} &:= \big\{\textup{filtrations }F^\bullet
\textup{ on }\mathcal{V}_0\ |\ S(F^p,F^{4-p})=0,\\
&\hspace{0.8Cm}0=F^4\subset F^3\subset F^2\subset F^1\subset F^0=\mathcal{V}_0,\\
& \hspace{1.3cm}\dim F^3=1=\dim F^0/F^1, \\
&\hspace{1.8cm}\dim F^2/F^3=n=\dim F^1/F^2\big\}.
\end{split}
\end{equation}
It goes back to work of Griffiths and Schmid
(see also \cite{BRYA-GRIF:1983}). It is a complex homogeneous space
and a projective manifold. More concretely, it is a 
bundle over the lagrangian Grassmannian
\begin{eqnarray}\label{3.14}
\check D_{lag} =\{F^2\subset \mathcal{V}_0\ |\ \dim F^2=n+1,\ 
S(F^2,F^2)=0\}
\end{eqnarray}
with fibers $\P(F^2)\cong\P^n$. The base $\check D_{lag}$
has dimension $n(n+1)/2$, the fibers contain the
possible choices of $F^3\subset F^2$ and 
$F^1=(F^3)^{\perp_S}$. The natural period map
\begin{eqnarray}\label{3.15}
\Pi:B_0\to \check D_{PHS},\quad b\mapsto F^\bullet_b,
\end{eqnarray}
is horizontal. Because of the CY-condition it is 
an embedding. It determines the variation of 
Hodge like filtrations.

\subsection{The classifying space $\check D_{BL}$}\label{c3.5}
There is a classifying space $\check D_{BL}$ for certain
(TEP)-structures with a natural projection 
$\pi_{BL}:\check D_{BL}\to \check D_{PHS}$. 
In a more general setting such spaces have been constructed in 
\cite{HERT:1999} and taken up again in \cite{HERT-SEVE:2008}\cite{HERT-SEVE2:2008}.
Here we restrict to the special case which we need.
Before defining and discussing $\check D_{BL}$, some notations
have to be established.

$\mathcal{V}_0$ is a $2n+2$ dimensional complex vector space with 
antisymmetric and nondegenerate pairing $S$.
The vector bundle $H':=\mathcal{V}_0\times \C^*$ comes equipped with 
the trivial flat connection $\nnn$ and a pairing
\begin{eqnarray}\label{3.16}
P:H_z'\times H_{-z}'&\to&\C\quad \textup{ for }z\in \C^*\\
(a,b)&\mapsto&S(a,b),
\quad \textup{ here }a,b\in H'_z=\mathcal{V}_0=H'_{-z}.\nonumber
\end{eqnarray}
It is $\nnn$-flat, antisymmetric and nondegenerate.
The space of global flat sections in $H'$ is denoted
$C^0$. It is identified with $\mathcal{V}_0$. 
For $\alpha\in\Z$ and $a\in \mathcal{V}_0$ the section 
$(z\mapsto z^\alpha\cdot a(z))$ is denoted $z^\alpha a$,
the space of such sections is denoted 
$C^\alpha=z^\alpha\cdot C^0$.

The space $V^\alpha:=\C\{z\}\cdot C^\alpha$ is the germ at 0
of the Deligne extension of $H'\to\C^*$ to a vector bundle
on $\C$ with logarithmic pole at 0 with $\alpha$ as the 
only eigenvalue of the residue endomorphism
$[\nnn_{z\paa_z}]$. 
Together the spaces $V^\alpha$, $\alpha\in\Z$, form the 
Kashiwara-Malgrange $V$-filtration. 
Of course $\Gr_V^\alpha \cong C^\alpha$ canonically.

Any (TEP)-structure $(H\to\C,\nnn,P)$ with 
$H_{|\C^*}=H'$ is determined by the germ $\HH_0:=\OO(H)_0$
at 0. We are interested in the regular singular (TEP)-structures,
i.e. those with $\HH_0\subset \sum_\alpha V^\alpha$.
The spectrum of such a (TEP)-structure is the tuple
$(\alpha_1,...,\alpha_{2n+2})\in \Z^{2n+2}$ with 
$\alpha\leq ...\leq \alpha_{2n+2}$ and
\begin{eqnarray}\label{3.17}
\sharp(i\ |\ \alpha_i=\alpha) 
= \dim \Gr_V^\alpha \HH_0/\Gr_V^\alpha z\HH_0.
\end{eqnarray}
The (TEP)-structure induces a decreasing filtration $F^\bullet(H)$
on $\mathcal{V}_0$ by
\begin{eqnarray}\label{3.18}
F^p(H)=F^p(\HH_0):= z^{p-3}\Gr_V^{3-p}\HH_0\subset C^0=\mathcal{V}_0.
\end{eqnarray}
The classifying space $\check D_{BL}$ of (TEP)-structures
relevant for us is
\begin{equation}\label{3.19}
\begin{split}
\check D_{BL}=& \big\{\textup{regular singular (TEP)-structures}
(H,\nnn,P)\\
&~\textup{of weight $3$ with }H_{|\C^*}=H' \textup{ and spectrum } \\
&~(\alpha_1,...,\alpha_{2n+2})=(0,1,...,1,2,...,2,3)\big\}\\
\end{split}
\end{equation}
with $1$ and $2$ each $n$ times.
\begin{theorem}
$\check D_{BL}$ is an algebraic manifold and a bundle
on $\check D_{PHS}$ via
\begin{eqnarray}\label{3.20}
\pi_{BL}:\check D_{BL}\to \check D_{PHS},\quad
H\mapsto F^\bullet(H).
\end{eqnarray}
The fibers are isomorphic to $\C^{n+1}$ as affine algebraic
manifolds and carry a good $\C^*$-action with weights 
$(1,...,1,2)$. The corresponding zero section
$\check D_{PHS}\hookrightarrow \check D_{BL}$ is given
by the (TEP)-structures defined as in \eqref{3.4}.
\end{theorem}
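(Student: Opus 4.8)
The plan is to describe $\check D_{BL}$ explicitly as a space of lattices in the $V$-filtration, and to show that once a point $F^\bullet \in \check D_{PHS}$ is fixed, the remaining freedom in choosing a (TEP)-structure lying over it is an affine space. First I would unwind the definitions: a regular singular (TEP)-structure with $H_{|\C^*} = H'$ and the prescribed spectrum is determined by its germ $\HH_0 \subset \sum_\alpha V^\alpha$, and the spectrum condition \eqref{3.17} together with $z V^\alpha = V^{\alpha+1}$ forces $\HH_0$ to contain $V^3$ and to be contained in $V^0$. Concretely, writing $F^p = F^p(H)$ as in \eqref{3.18}, one checks that the conditions on $\HH_0$ translate into
\begin{eqnarray*}
\HH_0 = F^3 z^0 \oplus \left(\textup{lift of } F^2/F^3\right) z^1 \oplus \left(\textup{lift of } F^1/F^2\right)z^2 \oplus F^0/F^1 \cdot z^3 + V^4,
\end{eqnarray*}
so that $\HH_0$ is a $\C\{z\}$-module freely generated, modulo $V^4 = z V^3$, by lifts of the graded pieces into successive $C^\alpha$'s.

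The key step is to identify the choices. Fixing $F^\bullet(H) = F^\bullet$ fixes the images of the generators in $\Gr_V$, so what remains free is the choice of lifts to the actual lattice $\HH_0$. I would organize this by the weight-$3$ generator $v_1$ spanning $F^3$: the lattice $\HH_0$ is freely determined by how the generators of $F^2/F^3$ and of $F^1/F^2$ are lifted into $\HH_0$ relative to a reference choice, namely the one coming from \eqref{3.4}. A lift of a class in $\Gr_V^1$ (contributing to $F^2/F^3$) can be modified by adding elements of $V^2 \cap \HH_0 / (\cdots)$, and similarly for $\Gr_V^2$; crucially the self-duality constraint imposed by $P$ (the pairing must still land in $z^3 \OO_\C$) cuts down these affine choices. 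I expect that after imposing $P$-compatibility — which is the analogue of the opposite-filtration condition \eqref{1.11} at the level of lattices — the parameter space for the lifts becomes an affine space of dimension $n+1$: one parameter for each of the $n$ generators of $F^2/F^3$ being pushed into the weight-$2$ part, plus one parameter for the weight-$3$ piece. This matches the stated fiber $\C^{n+1}$ and identifies the $\C^*$-action as the one rescaling $z$, which acts with weight $1$ on each of the $n$ directions and weight $2$ on the last, explaining the weights $(1,\dots,1,2)$.

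The main obstacle will be handling the pairing $P$ correctly: I must verify that the $P$-compatibility condition \eqref{3.1}, requiring $P(\HH_0, \HH_0) \subset z^3 \OO_\C$, restricts the lifts to a linear-affine subspace of the naive lift space rather than imposing nonlinear or inconsistent constraints. The cleanest way to do this is to fix an $S$-adapted flat basis $v_1,\dots,v_{2n+2}$ of $\mathcal{V}_0$ as in \eqref{2.2} and write the generators of $\HH_0$ as $z^{p-3}(v_\alpha + \textup{corrections in lower } V\textup{-weight})$; then $P$-flatness and the normalization $S(v_1,v_{2n+2})=-1$, $S(v_k,v_{k+n})=1$ pin down the corrections to the weight-$2$ and weight-$3$ terms as free affine parameters, with the weight-$2$ correction governing the $\C^*$-weight-$1$ directions and the $t_{2n+2}$-type correction the weight-$2$ direction. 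Finally I would verify algebraicity by noting that $\check D_{BL}$ is cut out inside the relevant jet space of lattices by polynomial (indeed affine-linear in the fiber) equations, so the projection $\pi_{BL}$ is an algebraic affine bundle, and that the reference lifts from \eqref{3.4} furnish the distinguished zero section, which is $\C^*$-fixed and precisely the locus where all correction parameters vanish.
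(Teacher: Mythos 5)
Your proposal takes essentially the same route as the paper's proof: for a fixed $F^\bullet\in\check D_{PHS}$ one chooses an $S$-adapted flat basis compatible with an opposite filtration, writes the $\C\{z\}$-generators of $\HH_0$ as $s_\alpha=z^{3-p_\alpha}v_\alpha$ plus lower $V$-order corrections, and shows that regularity ($z\nnn_{z\paa_z}\HH_0\subset\HH_0$) together with $P(\HH_0,\HH_0)\subset z^3\C\{z\}$ cuts the corrections down to an affine space $\C^{n+1}$, on which the rescaling $z\mapsto rz$ acts with weights $(1,\dots,1,2)$ and whose origin is the lattice \eqref{3.4}.

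The one step you omit is the paper's preliminary lemma that $\pi_{BL}$ is well defined, i.e.\ that for an \emph{arbitrary} $H\in\check D_{BL}$ the filtration $F^\bullet(H)$ of \eqref{3.18} satisfies $S(F^p(H),F^{4-p}(H))=0$ and hence lies in $\check D_{PHS}$; your fiberwise analysis starts from a given point of $\check D_{PHS}$ and so cannot by itself show that these fibers exhaust $\check D_{BL}$. The paper proves this by choosing sections $\sigma_1\in\HH_0\cap(z^{3-p}a_1+V^{4-p})$ and $\sigma_2\in\HH_0\cap(z^{p-1}a_2+V^{p})$ lifting classes $a_1\in F^p(H)$, $a_2\in F^{4-p}(H)$ and observing that the vanishing of the $z^2$-coefficient of $P(\sigma_1,\sigma_2)\in z^3\C\{z\}$ gives exactly $S(a_1,a_2)=0$; this short check should be added before your description can be assembled into the asserted bundle over $\check D_{PHS}$.
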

\begin{proof}
This theorem is a special case of \cite[theorem 5.6]{HERT:1999},
but here the proof simplifies.
In the following we present the proof, as it provides useful
explicit control on $\check D_{BL}$.
\begin{lemma} 
$F^\bullet(H)\in \check D_{PHS}$ if $H\in\check D_{BL}$.
\end{lemma}
\begin{proof}
$F^\bullet(H)$ is decreasing because
\begin{eqnarray*}
F^{p+1}(H)&=&z^{p+1-3}\Gr_V^{3-(p+1)}\HH_0\\
&=&z^{p-3}\Gr_V^{3-p}z\HH_0
\subset z^{p-3}\Gr_V^{3-p}\HH_0=F^p(H).
\end{eqnarray*}
Because of the spectral numbers
\begin{eqnarray*}
(\dim F^p(H)\ |\ p=3,2,1,0)
=(1,n+1,2n+1,2n+2).
\end{eqnarray*}
If $a_1\in F^p(H)$ and $a_2\in F^{4-p}(H)$ then there are 
sections
\begin{eqnarray*}
\sigma_1\in \HH_0\cap(z^{3-p}a_1+V^{4-p})
\textup{ and }
\sigma_2\in \HH_0\cap(z^{3-(4-p)}a_2+V^{4-(4-p)}).
\end{eqnarray*}
The $z^2$-coefficient of $P(\sigma_1,\sigma_2)\in 
z^3\C\{z\}$ vanishes. This shows $S(a_1,a_2)=0$.
Therefore $S(F^p(H),F^{4-p}(H))=0$ and 
$F^\bullet(H)\in \check D_{PHS}$
\end{proof}

Now the fiber $\pi_{BL}^{-1}(F^\bullet)$ for an arbitrary
$F^\bullet$ shall be determined. The (TEP)-structures
in this fiber will be described by certain distinguished
sections in them. For that we make the same choices
as in lemma \ref{c2.1}, a filtration $U_\bullet$ which 
is opposite to $F^\bullet$ and 
a basis $v_1,...,v_{2n+2}$ of $\mathcal{V}_0$ which satisfies 
\eqref{2.1} and \eqref{2.2}.
We use again the convention \eqref{2.11} for indices
$i,j,k,a,b,\alpha$. We define sections
\begin{eqnarray}\label{3.21} 
\left.\begin{matrix}s_1&=&v_1\in C^0,& s_i&=&zv_i\in C^1,\\
s_a&=&z^2v_a\in C^2,& s_{2n+2}&=&z^3v_{2n+2}\in C^3,\end{matrix}\right\}
\end{eqnarray}
and we define $(p_1,p_i,p_a,p_{2n+2})=(3,2,1,0)$ so that
$v_\alpha\in F^{p_\alpha}\cap U_{p_\alpha}$ and 
$s_\alpha\in z^{3-p_\alpha}\cdot F^{p_\alpha}\cap U_{p_\alpha}$.
The following picture illustrates this and the
next lemma.

\setlength{\unitlength}{0.8mm}

\noindent
\begin{picture}(140,80)
\thinlines

\put(10,20){\vector(1,0){120}}


\put(28,20){\framebox(2.1,50){}}
\put(58,20){\framebox(2.1,50){}}
\put(88,20){\framebox(2.1,50){}}
\put(118,20){\framebox(2.1,50){}}

\put(28,30){\line(1,0){2}}
\put(58,30){\line(1,0){2}}
\put(88,30){\line(1,0){2}}
\put(118,30){\line(1,0){2}}

\put(28,45){\line(1,0){2}}
\put(58,45){\line(1,0){2}}
\put(88,45){\line(1,0){2}}
\put(118,45){\line(1,0){2}}


\put(28,60){\line(1,0){2}}
\put(58,60){\line(1,0){2}}
\put(88,60){\line(1,0){2}}
\put(118,60){\line(1,0){2}}

\put(31,30){\makebox(0,0)[tl]{${\rm Gr}_V^{0}
{\mathcal H}_0$}}
\put(61,45){\makebox(0,0)[tl]{${\rm Gr}_V^{1}
{\mathcal H}_0$}}
\put(61,30){\makebox(0,0)[tl]{${\rm Gr}_V^{1}
z{\mathcal H}_0$}}
\put(91,60){\makebox(0,0)[tl]{${\rm Gr}_V^{2}
{\mathcal H}_0$}}
\put(121,70){\makebox(0,0)[tl]{${\rm Gr}_V^{3}
{\mathcal H}_0$}}

\multiput(28.7,20)(0.7,0){2}{\line(0,1){10}}
\multiput(58.7,20)(0.7,0){2}{\line(0,1){25}}
\multiput(58.35,20)(0.7,0){3}{\line(0,1){10}}
\multiput(88.7,20)(0.7,0){2}{\line(0,1){40}}
\multiput(88.35,20)(0.7,0){3}{\line(0,1){25}}
\multiput(118.7,20)(0.7,0){2}{\line(0,1){50}}
\multiput(118.35,20)(0.7,0){3}{\line(0,1){40}}

\put(29,16){\makebox(0,0){$0$}}
\put(59,16){\makebox(0,0){$1$}}
\put(89,16){\makebox(0,0){$2$}}
\put(119,16){\makebox(0,0){$3$}}

\put(29,74){\makebox(0,0){$C^{0}$}}
\put(59,74){\makebox(0,0){$C^{1}$}}
\put(89,74){\makebox(0,0){$C^{2}$}}
\put(119,74){\makebox(0,0){$C^{3}$}}

\put(29,19){\line(0,1){1}}
\put(59,19){\line(0,1){1}}
\put(89,19){\line(0,1){1}}
\put(119,19){\line(0,1){1}}

\put(23,5){\line(1,0){3}}
\put(23,5){\line(0,1){13}}
\put(24,6){\makebox(0,0)[bl]{$V^{0}$}}

\put(53,5){\line(1,0){3}}
\put(53,5){\line(0,1){13}}
\put(54,6){\makebox(0,0)[bl]{$V^{1}$}}

\put(83,5){\line(1,0){3}}
\put(83,5){\line(0,1){13}}
\put(84,6){\makebox(0,0)[bl]{$V^{2}$}}

\put(113,5){\line(1,0){3}}
\put(113,5){\line(0,1){13}}
\put(114,6){\makebox(0,0)[bl]{$V^{3}$}}

\put(27,25){\makebox(0,0)[mr]{$s_1$}}
\put(57,37){\makebox(0,0)[mr]{$s_i$}}
\put(87,52){\makebox(0,0)[mr]{$s_a$}}
\put(117,65){\makebox(0,0)[mr]{$s_{2n+2}$}}

\put(27,30){\makebox(0,0)[br]{$U_2$}}
\put(27,45){\makebox(0,0)[br]{$U_1$}}
\put(27,60){\makebox(0,0)[br]{$U_0$}}
\put(33,45){\makebox(0,0)[tl]{$F^2$}}
\put(33,60){\makebox(0,0)[tl]{$F^1$}}
\put(33,70){\makebox(0,0)[tl]{$F^0$}}

\thicklines
\put(69,63){\vector(1,0){10}}
\put(74,64){\makebox(0,0)[b]{$z$}}

\end{picture}

\begin{lemma}
(a) For any $H\in \pi_{BL}^{-1}(F^\bullet)$ there exist 
unique sections $\sigma_\alpha\in\HH_0$ with 
$\sigma_\alpha-s_\alpha\in\sum_{\beta>3-p_\alpha}z^\beta\cdot
U_{2-\beta}$.
They form a $\C\{z\}$-basis of $\HH_0$.
Explicitly, they take the form
\begin{eqnarray}\label{3.22}
\sigma_1&=& s_1+\sum_a y_a\cdot z^{-1}\cdot s_a
+y_{2n+2}\cdot z^{-1}\cdot s_{2n+2},\\
\sigma_i&=& s_i+y_{n+i}\cdot z^{-1}\cdot s_{2n+2},
\nonumber\\
\sigma_a&=& s_a,\nonumber\\
\sigma_{2n+2}&=& s_{2n+2},\nonumber
\end{eqnarray}
with some $y_a\in\C,\ y_{2n+2}\in\C.$

(b) The other way round, for any $y_a\in\C$ and 
$y_{2n+2}\in C$, these sections generate over $\C\{z\}$
the germ $\HH_0$ of a (TEP)-structure in 
$\pi_{BL}^{-1}(F^\bullet)$.

(c) Therefore $\pi_{BL}^{-1}(F^\bullet)\cong \C^{n+1}$
as an affine algebraic manifold, and 
$y_{n+2},...,y_{2n+1},y_{2n+2}$ are coordinates on it.
\end{lemma}

\begin{proof}
(a) Because of the spectral numbers 
$\HH_0=(\HH_0\cap (C^0+C^1+C^2))\oplus V^3$.
Because of $F^\bullet (H)=F^\bullet$ there exist
sections in $\HH_0\cap (s_\alpha+V^{4-p_\alpha})$.
Existence and uniqueness of the sections $\sigma_\alpha$
is now an easy argument in linear algebra. 
It is also clear that they form a $\C\{z\}$-basis 
of $\HH_0$. A priori they take the form
\begin{eqnarray}\label{3.23}
\sigma_1&=& s_1+\sum_a y_a\cdot z^{-1}\cdot s_a
+ x_{2n+2}\cdot z^{-2}\cdot s_{2n+2}\\
&&\hspace*{3cm} +\ y_{2n+2}\cdot z^{-1}\cdot s_{2n+2},\nonumber \\
\sigma_i&=& s_i+x_{n+i}\cdot z^{-1}\cdot s_{2n+2},\nonumber\\
\sigma_a&=& s_a,\nonumber\\
\sigma_{2n+2}&=& s_{2n+2},\nonumber
\end{eqnarray}
with $y_a, x_{2n+2}, y_{2n+2}, x_a\in\C.$
The germ $\HH_0$ satisfies 
\begin{eqnarray}\label{3.24}
&& z\nnn_{z\paa_z} \HH_0\subset \HH_,\\
&& P:\HH_0\times \HH_0\to z^3\C\{z\}
\quad\textup{ nondegenerate}.\label{3.25}
\end{eqnarray}
On the other hand, the sections $\sigma_\alpha$
satisfy
\begin{eqnarray}\label{3.26}
z\nnn_\zdz \sigma_1 &=&
\sum_ay_a\cdot \sigma_a + x_{2n+2}\cdot z^{-1}\cdot
\sigma_{2n+2} + 2y_{2n+2}\cdot \sigma_{2n+2},\\
z\nnn_\zdz \sigma_i &=& z\cdot \sigma_i 
+ x_{2n+2}\cdot \sigma_{2n+2}, \label{3.27},\\
z\nnn_\zdz \sigma_a &=& 2z\cdot \sigma_a,\label{3.27b}\\ 
z\nnn_\zdz \sigma_{2n+2} &=& 3z\cdot \sigma_{2n+2},
\label{3.28},
\end{eqnarray}
\begin{equation}
P(\sigma_\alpha,\sigma_\beta)
=
\begin{pmatrix} 2zx_{2n+2} & z^2(y_{n+i}-x_{n+i}) & 
 0 & z^3 \\
 z^2(x_{n+i}-y_{n+i}) & 0 & z^3 & 0 \\
 0 & z^3 & 0 & 0 \\
 z^3 & 0 & 0 & 0 \end{pmatrix}.\label{3.29}
 \end{equation}
 with $\alpha\in\{1,i,a,2n+2\}$ and $\beta\in\{1,j,b,2n+2\}$.
Both \eqref{3.26} and \eqref{3.29} show $x_{2n+2}=0,$
\eqref{3.29} shows also $x_a=y_a$.
This proves part (a).

(b) The sections $\sigma_\alpha$ generate over 
$\C\{z\}$ the germ $\HH_0$ of the sections of a 
vector bundle $H\to\C$ which extends $H'\to\C^*$.
Because of \eqref{3.26} - \eqref{3.29} $\HH_0$
satisfies \eqref{3.24}\&\eqref{3.25}. 
Therefore $(H,\nnn,P)$ is a (TEP)-structure.
It is in $\pi_{BL}^{-1}(F^\bullet)$.

(c) It is now also clear.
\end{proof}

\bigskip
There is a natural $\C^*$-action on $\check D_{BL}$
which respects the fibers of $\pi_{BL}$.
It is defined (coordinate independently) as follows.
For any $r\in\C^*$ define 
$\pi_r:\C\to\C, z\mapsto r\cdot z$.
Then $(H,\nnn,P)\in \pi_{BL}^{-1}(F^\bullet)$
is mapped by $r\in \C^*$ via the $\C^*$-action
to $\pi_r^*(H,\nnn,P)\in\pi_{BL}^{-1}(F^\bullet)$.

This action works as follows on the sections and 
coordinates in the last lemma.
If $a\in C^0$ and $\alpha\in\Z$ then 
$\pi_r^*(z^\alpha\cdot a)=r^\alpha\cdot z^\alpha\cdot a$,
so
\begin{eqnarray*}
\pi_r^*\sigma_1&=& s_1+\sum_a r_\cdot y_a\cdot z^{-1}\cdot s_a
+r^2\cdot y_{2n+2}\cdot z^{-1}\cdot s_{2n+2},\\
\pi_r^*\sigma_i&=& r\cdot 
\left(s_i+r\cdot y_{n+i}\cdot z^{-1}\cdot s_{2n+2}\right),\\
\pi_r^*\sigma_a&=& r^2\cdot \sigma_a,\\
\pi_r^*\sigma_{2n+2}&=& r^3\cdot \sigma_{2n+2},
\end{eqnarray*}
and the $\C^*$-action on $\pi_{BL}^{-1}(F^\bullet)$
is given in the coordinates $(y_a,y_{2n+2})$ by 
$r.(y_a,y_{2n+2})=(r\cdot y_a,r^2\cdot y_{2n+2})$.
This finishes the proof of the theorem.
\end{proof}

\begin{remark}
(i) Sections like the $\sigma_\alpha$ above were used first
in \cite[ch. 3]{SAIT:1989}.

(ii) The vector bundle $\mathcal{V}_0\times \check D_{PHS}$ carries
the trivial flat connection $\nnn^{\mathcal{V}_0\times D}$ and 
the tautological filtration $F^\bullet$. The filtration is 
a family of Hodge like filtrations, but not a variation,
because the Griffiths transversality is violated.
Nevertheless, a filtration $U_\bullet$ which is opposite
to a reference filtration $F^\bullet_*\in \check D_{PHS}$
and then also to all filtrations nearby,
induces a decomposition 
$\nnn^{\mathcal{V}_0\times D}=\nnn^U+C^U$ into a flat connection
$\nnn^U:\OO(F^p\cap U_p)\to \OO(F^p\cap U_p)\otimes
\Omega^1_{(\textup{nbhd of }F^\bullet_*)}$ and a tensor
$C^U:\OO(F^p\cap U_p)\to \OO(U_{p-1})\otimes
\Omega^1_{(\textup{nbhd of }F^\bullet_*)}$.

A basis $v_1(*),...,v_{2n+2}(*)$ of vectors with 
\eqref{2.1} and \eqref{2.2} for $F^\bullet_*$
extends to a $\nnn^{\mathcal{V}_0\times D}$-flat basis of sections
of $\mathcal{V}_0\times \check D_{PHS}$ with \eqref{2.1} and
\eqref{2.2}.
The formulas \eqref{3.21} and \eqref{3.22} extend to 
these sections and yield a trivialization of the bundle
$\pi_{BL}:\check D_{BL}\to \check D_{PHS}$ on 
$(\textup{nbhd of }F^\bullet_*)\subset \check D_{PHS}$,
with fiber coordinates $y_a,y_{2n+2}$.

(iii) The vector field on $\check D_{BL}$ which generates
the canonical $\C^*$-action is denoted $E_{BL}$.
It is tangent to the fibers of $\pi_{BL}$. 
In local coordinates as in (ii) it is 
$E_{BL}=\sum_a y_a\frac{\paa}{\paa y_a}+
2y_{2n+2}\frac{\paa}{\paa y_{2n+2}}$.
The zero section 
$\check D_{PHS}\hookrightarrow \check D_{BL}$ consists
of the (TEP)-structures with 
$\HH_0=\sum_p \C\{z\}\cdot z^{3-p}\cdot F^p$,
$F^\bullet\in \check D_{PHS}$ (as in \eqref{3.4}).

(iv) Any (TEP)-structure in $\check D_{BL}$ is
determined by $L:=\HH_0\cap (C^0+C^1+C^2)$.
The $z^2$-coefficient of $P$ restricts to a symplectic
form $P^{(2)}$ on $C^0+C^1+C^2$.
The multiplication by $z$ restricts to a nilpotent 
endomorphism $\mu_z:C^0+C^1+C^2\to C^1+C^2$ with 
$C^0\stackrel{\mu_z}{\to}C^1\stackrel{\mu_z}{\to}
C^2\stackrel{\mu_z}{\to}0.$
We leave it to the reader to show that the classifying
space $\check D_{BL}$ can be identified with the following
classifying space of certain lagrangian subspaces,
\begin{equation}\label{3.30}
\begin{split}
\www D_{BL} =& ~\big\{L\subset C^0+C^1+C^2\ |\ 
\mu_z(L)\subset L, \mu_z(\nnn_\zdz L)\subset L,\\
&\hspace{0.3cm}P^{(2)}(L,L)=0,\dim L=3n+3,\nonumber\\
&\hspace{0.3cm}\dim L\cap (C^1+C^2)=3n+2,\dim L\cap C^2=2n+1\big\}.\hspace*{1cm}
\end{split}
\end{equation}
\end{remark}

\subsection{The canonical (TEP)-structure 
with isomorphy condition}\label{c3.6}

As in section \ref{c3.4} a variation of Hodge like filtrations
$((B_0,0),\mathcal{V},\nnn^\mathcal{V},S,F^\bullet)$ of weight $w=3$ with pairing 
and CY-condition \eqref{1.7} \& \eqref{1.8} is fixed.
The base space $B_0$ is identified with its image 
$\Pi(B_0)\subset \check D_{PHS}$ under the period map
$\Pi:B_0\to \check D_{PHS}$ in \eqref{3.15}.
Define
\begin{eqnarray}\label{3.31}
B_2:= \pi_{BL}^{-1}(\Pi(B_0)) \quad \textup{and}\quad 
M:=\C\times B_2.
\end{eqnarray}
The coordinate on the factor $\C$ in $\C\times B_2$ 
is denoted $y_1$. 
The tautological family of (TEP)-structures on
$\check D_{BL}$ restricts to a family of (TEP)-structures
on $B_2$. We extend it to a family 
$(H\to\C\times B_2,\nnn,P)$ of (TEP)-structures on $M$
by twisting all sections with $e^{y_1/z}$.

\begin{theorem}With these definitions:
\begin{itemize}
\item[(a)] This is a (TEP)-structure on $M$ with isomorphy 
condition \eqref{3.10}. Theorem \ref{c3.1} applies and 
gives $M$ a canonical F-manifold structure.
The unit field is $e=\frac{\paa}{\paa y_1}$, the Euler
field is $E=y_1\frac{\paa}{\paa y_1}-(E_{BL})_{|B_2}$
($E_{BL}$ is defined in remark \ref{c3.5} (iii)).
\item[(b)] For any of the Frobenius manifolds in Theorem \ref{c2.2},
the underlying manifold $M^{U,\lambda}$ is 
canonically isomorphic to $M$. The isomorphism
respects the F-manifold structure and the Euler field.
\end{itemize}
\end{theorem}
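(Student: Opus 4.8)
The plan is to handle the two parts in sequence, deducing (b) from the identification of (TEP)-structures announced at the start of Section~\ref{c3.1}. For part (a) I would first verify the three (TEP)-axioms directly for the twisted family. Writing $\sigma$ for a local section of the tautological family over $B_2$ and $\tilde\sigma=e^{y_1/z}\sigma$ for its twist, one computes $\nnn_{\paa y_1}\tilde\sigma=\frac{1}{z}\tilde\sigma$, $z^2\nnn_{\paa z}\tilde\sigma=e^{y_1/z}(z^2\nnn_{\paa z}\sigma-y_1\sigma)$ and $z\nnn_X\tilde\sigma=e^{y_1/z}\,z\nnn_X\sigma$ for $X\in T_{B_2}$; hence the pole along $\{0\}\times M$ still has Poincar\'e rank $1$, the new Higgs field in the $y_1$-direction is the identity, and $U$ is replaced by $U-y_1\,\id$. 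Since $e^{y_1/z}$ is exchanged with $e^{-y_1/z}$ under $j$, the pairing $P$ is unchanged and still nondegenerate with values in $z^3\OO$, so flatness and the $P$-axiom survive the twist. For the isomorphy condition \eqref{3.10} I would exhibit the generator $\xi=e^{y_1/z}\sigma_1$: using the explicit sections \eqref{3.22} together with the structure equations \eqref{2.4}--\eqref{2.10} one gets $C_{\paa y_1}\xi=\sigma_1$, $C_{\paa t_i}\xi=\sigma_i$, $C_{\paa y_a}\xi=\sigma_a$ and $C_{\paa y_{2n+2}}\xi=\sigma_{2n+2}$, so $C_\bullet\xi:T_M\to\OO(H_{|\{0\}\times M})$ carries the coordinate frame to the basis $(\sigma_\alpha)$ and is an isomorphism.

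The construction theorem of Section~\ref{c3.3} then applies. The unit field is $e=\paa y_1$ because $C_{\paa y_1}=\id$, and the Euler field is read off from $C_E\xi=-U\xi$: since the twisted $U$ satisfies $U\xi=\sum_a y_a\sigma_a+2y_{2n+2}\sigma_{2n+2}-y_1\sigma_1$, matching coefficients against the frame above forces $E=y_1\paa y_1-\sum_a y_a\paa y_a-2y_{2n+2}\paa y_{2n+2}=y_1\paa y_1-(E_{BL})_{|B_2}$, as claimed.

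For part (b) the plan is to identify the canonical F-manifold produced on $M$ in (a) with the Frobenius manifold $M^{U,\lambda}$ of Theorem~\ref{c2.2}. I would define $\phi:M^{U,\lambda}\to M$ by $t_1\mapsto y_1$, the identity on the $B_0$-coordinates $t_2,\dots,t_{n+1}$, and $t_a\mapsto y_a$, $t_{2n+2}\mapsto y_{2n+2}$ on the fibre of $\pi_{BL}$, and then check that $\phi$ pulls back the multiplication, unit and Euler field reconstructed on $M$ to those of \eqref{2.21}, \eqref{2.22}. This is a direct computation with $C_\bullet\xi$: by \eqref{2.6} one finds $C_{\paa t_i}\sigma_j=\sum_k\paa_i\paa_j\paa_k\Psi\cdot\sigma_{n+k}$, hence $\paa t_i\circ\paa t_j=\sum_k\paa_i\paa_j\paa_k\Psi\cdot\paa y_{n+k}$; by \eqref{2.8} one finds $C_{\paa t_i}\sigma_a=\delta_{i+n,a}\sigma_{2n+2}$, hence $\paa t_i\circ\paa y_a=\delta_{i+n,a}\paa y_{2n+2}$; and all remaining products vanish. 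Under $\phi$ these are exactly \eqref{2.21}, while $e$ and $E$ agree by the formulas obtained in (a). Canonicity then follows because $M=\C\times B_2$ and its twisted tautological (TEP)-structure, hence its F-manifold, are built from the variation alone with no choices, so every $M^{U,\lambda}$ is identified through $\phi$ with one and the same $M$; in the language of Section~\ref{c3.1} this is the assertion that the (TEP)-structure $\pi^*TM^{U,\lambda}$ of \eqref{3.3} is, up to canonical isomorphism, the choice-independent structure on $M$, from which independence of the underlying F-manifold is immediate.

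The step I expect to be the main obstacle is the clean matching of the two frames underlying part (b): the reconstructed multiplication on $M$ lives in the frame $(\sigma_\alpha)$ adapted to $F^\bullet\cap U_\bullet$ and to the $V$-filtration, whereas $\circ$ on $M^{U,\lambda}$ lives in the $\nnn^g$-flat coordinate frame $(\paa t_\alpha)$, and one must verify that the identification $\phi$ (in particular the fibre identification $t_a\leftrightarrow y_a$) is forced and independent of the auxiliary basis $v_\alpha$ beyond $v_1(0)=\lambda$. This is precisely where the independence statements at the end of the proof of Lemma~\ref{c2.1} and the $\C^*$-equivariance of $\check D_{BL}$ must be invoked, and where the bookkeeping of the twist $e^{y_1/z}$ — responsible simultaneously for promoting $B_2$ to $M$, for the unit field via $C_{\paa y_1}=\id$, and for the shift $U\mapsto U-y_1\,\id$ that produces the Euler field — has to be kept consistent throughout.
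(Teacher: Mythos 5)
Your proposal is correct and follows essentially the same route as the paper: twisting the tautological family on $B_2$ by $e^{y_1/z}$, checking the (TEP)-axioms and the isomorphy condition via the generator $\xi=[e^{y_1/z}\sigma_1]$, reading off $e$ and $E$ from $C_{\paa/\paa y_1}=\id$ and $C_E\xi=-U\xi$, and proving (b) by the coordinate identification $t_\alpha\leftrightarrow(y_1,t_i,y_a,y_{2n+2})$ together with the comparison of Higgs fields under $C_\bullet\xi$ against \eqref{2.21}. The "main obstacle" you flag (independence of $\phi$ from the auxiliary basis $v_\alpha$) is resolved in the paper exactly as you indicate, by the transformation statements at the end of the proof of Lemma \ref{c2.1} combined with the corresponding behaviour of the fiber coordinates in \eqref{3.22}.
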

\begin{proof}
It will be proved in several steps. For the rest of the
section an opposite filtration $U_\bullet$ and a 
vector $\lambda\in F^3_0-\{0\}$ as in theorem \ref{c2.2}
are chosen. Furthermore, sections $v_1,...,v_{2n+2}$ on $\mathcal{V}$
as in lemma \ref{c2.1} and with $v_1(0)=\lambda$ are chosen.
Lemma \ref{c2.1} yields coordinates $t_2,...,t_{n+1}$
on $B_0$ and a prepotential $\Psi\in \OO_{B_0}$.
Then the formulas \eqref{3.21} and \eqref{3.22} in 
section \ref{c3.5} provide sections $\sigma_\alpha$
which generate the tautological family of (TEP)-structures
on $B_2\subset \check D_{BL}$.

\begin{lemma}
(a) For $i,j,k\in\{2,...,n+1\}$ and $a\in\{n+2,...,2n+1\}$
\begin{eqnarray}\nonumber
z\nnn\sigma_1 &=& 
\sum_i\sigma_i\ddd t_i + \sum_a \sigma_a\ddd y_a 
+ \sigma_{2n+2}\ddd y_{2n+2} \\
&&+\left(\sum_a y_a\sigma_a+2y_{2n+2}\sigma_{2n+2}\right)
\frac{\ddd z}{z},\label{3.32} \\
z\nnn\sigma_i &=&
\sum_j\left(\sum_k\paa_i\paa_j\paa_k\Psi
\cdot\sigma_{n+k}\right)\cdot \ddd t_j \nonumber\\
&&+\sigma_{2n+2}\cdot \ddd y_{n+i} 
+y_{n+i}\cdot\sigma_{2n+2}\cdot \frac{\ddd z}{z}
+\sigma_i\ddd z,\label{ 3.33} \\
z\nnn\sigma_a &=& \sigma_{2n+2}\cdot \ddd t_{a-n} 
+ 2\sigma_a\cdot \ddd z,\label{3.34}\\
z\nnn\sigma_{2n+2} &=& 3\sigma_{2n+2}\cdot \ddd z.
\label{3.35}
\end{eqnarray}

(b) The family of tautological (TEP)-structures has a 
pole of Poincar\'e rank 1 along $\nmmm$ and is therefore
a (TEP)-structure on $B_2$. Its bundle is denoted 
$H^{B_2} \to \C\times B_2$.

(c) The sections $\sigma_\alpha$ define an extension
to a (trTLEP)-structure on $B_0$.
\end{lemma}
\begin{proof}
(a) These formulas follow from \eqref{3.21}, from the formulas
in lemma \ref{c2.1}, from \eqref{3.26} - \eqref{3.28} 
and from derivating the sections $\sigma_\alpha$ with 
$\nnn_{\frac{\paa}{\paa y_a}}$ and $\nnn_{\frac{\paa}{\paa y_{2n+2}}}$.

(b) Obvious. 

(c) This follows from (a) and \eqref{3.29}.
\end{proof}

\begin{lemma}
The bundle $H\to\C\times M$ whose sheaf
is $\OO(H)=e^{y_1/z}\cdot pr_2^*\OO(H^{B_2})$
(where $pr_2:\C\times B_2\to B_2$ is the projection)
is a (TEP)-structure with 
\begin{eqnarray}\label{3.36}
z\nnn (e^{y_1/z}\sigma_\alpha) 
= e^{y_1/z}\cdot z\nnn(\sigma_\alpha) 
+ e^{y_1/z}\cdot \sigma_\alpha\ddd y_1 
-y_1\cdot e^{y_1/z}\cdot\frac{\ddd z}{z}.
\end{eqnarray}
It satisfies the isomorphy condition \eqref{3.10}.
The sections $e^{y_1/z}\cdot\sigma_\alpha$
define an extension to a (trTLEP)-structure.
\end{lemma}

\begin{proof}
\eqref{3.36} shows that the pole along $\nmmm$ is of
Poincar\'e rank 1. The pairing $P$ satisfies
\begin{eqnarray*}
P(e^{y_1/z}\cdot \sigma_\alpha,e^{y_1/(-z)}\cdot\sigma_\beta)
= P(\sigma_\alpha,\sigma_\beta)\in z^3\cdot\C,
\end{eqnarray*}
so it is the pairing of a (TEP)-structure.
By \eqref{3.32} - \eqref{3.36} the sections 
$e^{y_1/z}\cdot\sigma_\alpha$ define an extension to a
(trTLEP)-structure. The Higgs field of the (TEP)-structure
satisfies the isomorphy condition \eqref{3.10}
because of \eqref{3.32} and \eqref{3.36}.
\end{proof}

Now theorem \ref{c3.1} applies and gives a canonical 
F-manifold structure. \eqref{3.36} shows 
$C_{\frac{\paa}{\paa y_1}}=\id$, therefore
$e=\frac{\paa}{\paa y_1}$.
In the following calculation, $[.]$ denotes 
the restriction to $H_{|\nmmm}$,
\begin{eqnarray*}
&&C_{y_1\paa_1-\sum_ay_a\paa_a-2y_{2n+2}\paa_{2n+2}}
[e^{y_1/z}\cdot \sigma_1]\\
&=& y_1[e^{y_1/z}\cdot \sigma_1] 
- \sum_a y_a[e^{y_1/z}\cdot\sigma_a]
- 2y_{2n+2}[e^{y_1/z}\cdot\sigma_{2n+2}]\\
&=& -U[e^{y_1/z}\cdot\sigma_1] 
:= -[z\nnn_\zdz e^{y_1/z}\cdot\sigma_1].
\end{eqnarray*}
Because $[e^{y_1/z}\cdot \sigma_1]$ generates
$\OO(H_{|\nmmm})$ as a $T_M$-module, 
this is sufficient to see 
$C_{y_1\paa_1-\sum_ay_a\paa_a-2y_{2n+2}\paa_{2n+2}}
=-U$ and $y_1\frac{\paa}{\paa y_1}-(E_{BL})_{|B_2}=E$.
Part (a) of theorem \ref{c3.6} is proved.

\bigskip
It rests to prove part (b).
The choice of the sections $v_1,...,v_{2n+2}$ yields 
coordinates $(t_2,...,t_{n+1})$ on $B_0$,
coordinates $(t_1,...,t_{2n+2})$ on $M^{U,\lambda}$
and coordinates $(y_1,t_2,...,t_{n+1}y_{n+2},...,y_{2n+2})$
on $M$. 

Of course, the most natural isomorphism between $M^{U,\lambda}$
and $M$ is by identifying these coordinates.
At the end of the proofs of lemma \ref{c2.1} and theorem 
\ref{2.2} it was discussed how the coordinates 
$(t_1,...,t_{2n+2})$ change if $(v_1,...,v_{2n+2})$ are
changed, but $U_\bullet$ and $\lambda$ are fixed.

One sees easily from \eqref{3.22} that the coordinates
$(y_1,t_2,...,t_{n+1},y_{n+2},...,y_{2n+2})$ change
in the same way. Therefore the isomorphism 
$M^{U,\lambda}\cong M$ above is canonical.
Obviously it respects unit field and Euler field.

It also respects the multiplication. To see this,
one chooses the section $\xi:=[e^{y_1/z}\cdot \sigma_1]$
in $H|_{\nmmm}$ and observes that the isomorphism
$C_\bullet\xi:TM\to H|_{\nmmm}$ maps $e$ to $\xi$,
$\paa_i$ to $[e^{y_1/z}\cdot \sigma_i]$,
$\frac{\paa}{\paa y_a}$ to $[e^{y_1/z}\cdot \sigma_a]$
and $\frac{\paa}{\paa y_{2n+2}}$ to 
$[e^{y_1/z}\cdot \sigma_{2n+2}]$.
The Higgs field of the multiplication on $TM$ is mapped
to the Higgs field $C$ on $H|_{\nmmm}$. 
One can extract the Higgs field $C$ from 
\eqref{3.32} - \eqref{3.36}. Comparison with \eqref{2.21}
shows that the multiplications coincide.
This proves part (b) of theorem \ref{c3.6}.
\end{proof}

\begin{remark}
(i) The isomorphism $C_\bullet\xi :TM\to H_{|\nmmm}$ above
with $\xi:=[e^{y_1/z}\cdot \sigma_1]$ 
lifts to an isomorphism from the (trTLEP)-structure
on $\pi^*TM$ in section \ref{3.2} (i) to the 
(trTLEP)-structure on $M$ in the last lemma,
with global sections $[e^{y_1/z}\cdot \sigma_\alpha]$.

(ii) In the beginning of section \ref{c3.3} a standard 
construction of Frobenius manifolds from meromorphic connections 
was mentioned. It can be applied to the Frobenius manifolds in 
theorem \ref{c2.2} and theorem \ref{c3.6}. There it uses the 
(trTLEP)-structure with isomorphy condition 
constructed in the last lemma 
and the isomorphism $C_\bullet\xi :TM\to H|_{\nmmm}$
with $\xi$ as above.
\end{remark}

\section{Projective special (K\"ahler) geometry}\label{c5}
\setcounter{equation}{0}

\noindent
This section presents some aspects of projective special geometry
in a form which will make the comparison with Frobenius manifolds
easy.
It does not offer new results, and it neglects some aspects,
for example the role of the pairing and an induced hermitian
metric.
Because of that we put the ``K\"ahler'' in brackets.
Projective special (K\"ahler) geometry has a purely holomorphic part,
the special coordinates, which are related to Frobenius manifolds,
and a part involving the real structure, which is not related to 
Frobenius manifolds, but to $tt^*$-geometry \cite{HERT:2003}.
We will touch the latter part only in the last part 
\ref{c5.4} of this section.
More complete accounts, different aspects and motivation are provided
in \cite{FREE:1999,CORT:1998,ALEX-CORT-DEVC:2002,BERS-CECO-OOGU-VAFA:1994}.

\subsection{The setting and two period maps}\label{c5.1}
Let $(B_0,\mathcal{V},\nnn,S,F^\bullet)$ be a variation of Hodge like filtrations
with pairing of weight $w=3$ which satisfies the CY-condition
\eqref{1.7} \& \eqref{1.8},
with $n=\dim B_0$.
As before, $B_0$ is a small neighborhood of a base point $0\in B_0$.
When necessary, the size of $B_0$ will be decreased, so essentially
the germ $(B_0,0)$ is considered.

The most important manifold in this section is 
$B=F^3-\{\textup{zero section}\}$, together with the natural projection
$p:B\to B_0$. The fibers $F^3_b-\{0\}\cong\C^*$ come equipped
with a $\C^*$-action from the vector space structure,
the corresponding vector field on $B$ is denoted
$\varepsilon$.
Points in $B$ are denoted $(\delta,b)$ where $b\in B_0$ and 
$\delta\in F^3_b-\{0\}$.
The pull back with $p$ yields on $B$ a variation of Hodge like
filtrations with pairing $(p^*\mathcal{V},p^*\nnn,p^*S,p^*F^\bullet)$ of weight $3$.
The bundle $p^*\mathcal{V}$ carries the tautological generating section 
$\sigma_{taut}$ with $\sigma_{taut}(\delta,b)=\delta$. It satisfies
\begin{eqnarray}\label{5.1}
(p^*\nnn)_\varepsilon \sigma_{taut} =  \sigma_{taut}.
\end{eqnarray}
There are two natural and related period maps:
\begin{eqnarray}\label{5.2}
P_1:B&\to& \mathcal{V}_0,\\
(\delta,b)&\mapsto& \nnn\textup{-flat shift of }\delta\in F^3_b\subset \mathcal{V}_b
\textup{ to }\mathcal{V}_0,\nonumber\\
P_2:TB &\to& p^*F^2,\label{5.3}\\
X&\mapsto& (p^*\nnn)_X\sigma_{taut}.\nonumber
\end{eqnarray}
Here $TB=T^{1,0}B$ is the holomorphic tangent bundle.
Only in the last section \ref{c5.4} 
also $T^{0,1}B$, $T^\C B=T^{1,0}B\oplus T^{0,1}M$
and $T^\R B$ will be used.

Because $B_0$ is small and $B$ is a $\C^*$-bundle on $B_0$,
the flat connection $\nnn$ induces the trivialization 
$\tau_1:\mathcal{V}\stackrel{\cong}{\to} \mathcal{V}_0\times B_0$ of the vector bundle $\mathcal{V}$, 
and $p^*\nnn$ induces the trivialization 
$\tau_2:p^*\mathcal{V}\stackrel{\cong}{\to} \mathcal{V}_0\times B$
of the vector bundle $p^*\mathcal{V}$.
\begin{lemma} The period maps $P_1$ and $P_2$ satisfy the following properties:
\begin{itemize}
\item[(a)] $P_1$ and $P_2$ are related by
\begin{eqnarray}\label{5.4}
\tau_2\circ P_2 = (P_1)_*:TB\to
P_1^*T\mathcal{V}_0 = P_1^*(\mathcal{V}_0\times \mathcal{V}_0) = \mathcal{V}_0\times B.
\end{eqnarray}
\item[(b)] $P_1$ is an embedding.
\item[(c)] $P_2$ is an embedding and thus (locally in $B$) an isomorphism
of vector bundles.
\end{itemize}
\end{lemma}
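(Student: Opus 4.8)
The plan is to prove the three statements in the logically convenient order (a)$\,\Rightarrow\,$(c)$\,\Rightarrow\,$(b) rather than as listed: part (a) is an unwinding of the definition of $\tau_2$, part (c) is the heart of the matter and uses the Calabi--Yau condition, and part (b) then follows essentially for free. For (a), recall that $\tau_2$ is the trivialization induced by the flat connection $p^*\nnn$, so it sends $p^*\nnn$-flat sections to constant sections and therefore conjugates $p^*\nnn$ to the trivial derivative $d$ on $\mathcal{V}_0$-valued functions. Read in this trivialization the tautological section becomes precisely the period map: $\tau_2(\sigma_{taut})(\delta,b)$ is the $\nnn$-flat shift of $\delta$ to $\mathcal{V}_0$, i.e.\ $P_1(\delta,b)$. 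Hence for $X\in TB$,
\[
\tau_2\bigl((p^*\nnn)_X\sigma_{taut}\bigr)=d_X\bigl(\tau_2\sigma_{taut}\bigr)=X(P_1)=(P_1)_*X,
\]
under the canonical identification $P_1^*T\mathcal{V}_0=\mathcal{V}_0\times B$, which is exactly \eqref{5.4}.

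For (c), Griffiths transversality \eqref{1.1} shows that $P_2(X)=(p^*\nnn)_X\sigma_{taut}$ lies in $p^*F^2$, so $P_2$ is a bundle map $TB\to p^*F^2$ over $B$ between bundles of equal rank $n+1$; it suffices to prove that $P_2$ is fibrewise injective. Fix $(\delta,b)$ and take $X$ with $P_2(X)=0$. Modulo $p^*F^3$ the class of $P_2(X)$ equals $C_{p_*X}\delta\in F^2_b/F^3_b$, where $C$ is the Higgs field \eqref{1.6} and $p_*X\in T_bB_0$; the vertical Euler direction $\varepsilon$ contributes nothing modulo $F^3$ precisely because $(p^*\nnn)_\varepsilon\sigma_{taut}=\sigma_{taut}$ by \eqref{5.1}. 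The Calabi--Yau condition \eqref{1.8} makes $C_\bullet\delta:T_bB_0\to F^2_b/F^3_b$ an isomorphism, so $C_{p_*X}\delta=0$ forces $p_*X=0$, i.e.\ $X=c\,\varepsilon$ for some $c\in\C$; but then $P_2(X)=c\,\sigma_{taut}=c\,\delta$, which vanishes only for $c=0$ since $\delta\neq0$. Thus $X=0$, $P_2$ is fibrewise injective, and by equality of ranks it is an isomorphism of vector bundles.

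For (b), combining (a) and (c) gives $(P_1)_*=\tau_2\circ P_2$ injective at every point, so $P_1$ is an immersion. For global injectivity, observe that $P_1$ is $\C^*$-equivariant (the flat shift is linear) and avoids $0$, hence descends to $\oooo{P_1}:B_0\to\P(\mathcal{V}_0)$, $b\mapsto[F^3_b]$, which is the composition of the period map $\Pi:B_0\to\check D_{PHS}$ from section \ref{c3.4} with the projection $F^\bullet\mapsto[F^3]$. Its differential is again governed by $C_\bullet\delta$ modulo the line $\C\delta$ and is therefore injective by \eqref{1.8}, so $\oooo{P_1}$ is an immersion and, on a sufficiently small representative, injective. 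If $P_1(\delta,b)=P_1(\delta',b')$ then $[F^3_b]=[F^3_{b'}]$ yields $b=b'$, and injectivity of the flat shift on the fibre line yields $\delta=\delta'$. Hence $P_1$ is an injective immersion of a small representative, i.e.\ an embedding.

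The main obstacle is part (c): the entire lemma rests on using the Calabi--Yau condition \eqref{1.8} to exclude a nonzero kernel of $P_2$, together with the bookkeeping that cleanly separates the vertical Euler direction (where $\sigma_{taut}$ reproduces itself via \eqref{5.1}) from the horizontal directions (governed by the Higgs field). Once (c) is established, part (a) makes the immersion in (b) automatic, and the remaining global injectivity is handled by passing to the projectivized period map $\oooo{P_1}$.
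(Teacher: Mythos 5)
Your proof is correct, and its core computation coincides with the paper's: injectivity of the differential rests on splitting a tangent vector into the vertical Euler direction, controlled by \eqref{5.1}, and a horizontal part, controlled by the CY-condition \eqref{1.8}. But you organize the globalization genuinely differently. The paper proves (b) first: it notes that $P_1$ restricted to the central fiber $p^{-1}(0)$ is the tautological embedding $F^3_0-\{0\}\hookrightarrow \mathcal{V}_0$, verifies that $(P_1)_*=P_2$ is an isomorphism only at points $(\delta,0)$ over the base point, and lets the smallness of $B_0$ propagate both the embedding property (b) and the bundle isomorphism (c) from the central fiber outward. You instead prove (c) pointwise at every $(\delta,b)\in B$ --- which tacitly uses the remark in section \ref{c1.1} that \eqref{1.8} extends from $0$ to all points of $B_0$; you should cite that --- and then deduce (b) from (a) and (c), handling the global injectivity of $P_1$ via $\C^*$-equivariance and the projectivized map $\oooo{P_1}:B_0\to\P(\mathcal{V}_0)$. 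Your route buys a bundle isomorphism in (c) globally over $B$ and avoids the openness argument hidden in the paper's ``because $B_0$ is small''; the paper's route is shorter because the central fiber carries the embedding for free. The one step both treatments gloss over equally is the passage from injective immersion to embedding; in your setup it can be closed by noting that $\C^*$-equivariance identifies $P_1(B)$ with the cone, minus the origin, over the embedded image of $\oooo{P_1}$, so that $P_1$ is a bijective immersion onto a locally closed submanifold and hence a biholomorphism onto its image.
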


\begin{proof}
(a) It follows from the definitions.

(b) The restriction of $P_1$ to $p^{-1}(0)=F^3_0-\{0\}\subset B$
is the tautological embedding $F^3_0-\{0\}\to F^3_0\subset \mathcal{V}_0$.
Because of this and because $B_0$ is small, for $P_1$ being an
embedding it is sufficient to show that its differential $(P_1)_*$
is injective at points $(\delta,0)\in p^{-1}(0)$. At such points
$(P_1)_*=P_2: T_{(\delta,0)}B\to F^2_0\subset \mathcal{V}_0$.
This map $P_2: T_{(\delta,0)}B\to F^2_0$ is an isomorphism because of
\eqref{5.1} and the CY-condition \eqref{1.7}\&\eqref{1.8}.

(c) Because $B_0$ is small and $B$ is a $\C^*$-bundle on $B_0$,
also this follows from the fact that the map 
$P_2: T_{(\delta,0)}B\to F^2_0$ is an isomorphism for any
$(\delta,0)\in p^{-1}(0)$.
\end{proof}

\subsection{Flat structure}\label{c5.2}
The same situation as in section \ref{c5.1} is considered.
Now additionally a $\nnn$-flat subbundle $U_1\subset \mathcal{V}$ of rank $n+1$
and with $S(U_1,U_1)=0$ is chosen. 
It is called {\it opposite subbundle} if
$F^2+U_1=\mathcal{V}$, equivalent: $F^2+U_1=F^2\oplus U_1$, also equivalent:
$F^2\cap U_1=\{\textup{zero section}\}$.
Because $B_0$ is small, these conditions are also equivalent to their
restrictions to the zero fiber $\mathcal{V}_0$.

\begin{lemma}
(a) The following three conditions are equivalent:
\begin{itemize}
\item[($i$)]
$U_1$ is an opposite subbundle.
\item[($ii$)]
The composition $pr_1\circ P_1:B\to \mathcal{V}_0\to \mathcal{V}_0/(U_1)_0$ is an embedding,
so locally an isomorphism.
Here $pr_1:\mathcal{V}_0\to \mathcal{V}_0/(U_1)_0$ is the projection.
\item[($iii$)]
The composition $pr_2:p^*F^2\to p^*\mathcal{V}\to p^*\mathcal{V}/p^*U_1$ of embedding and 
projection is an isomorphism of vector bundles.
Then also $pr_2\circ P_2:TB\to p^*\mathcal{V}/p^*U_1$ is an isomophism of 
vector bundles.
\end{itemize}

(b) Suppose that (i)--(iii) hold. The vector space structure on 
$\mathcal{V}_0/(U_1)_0$ induces a flat structure on $B$ with flat
and torsion free connection $\nnn^{U_1}$.
The flat connection $p^*\nnn$ on $p^*\mathcal{V}$ induces a flat connection
on the quotient bundle $p^*\mathcal{V}/p^*U_1$ because $p^*U_1$ is a flat subbundle,
and that connection induces via $(pr_2\circ P_2)^*$ a flat connection
$\nnn'$ on $TB$. Then $\nnn'=\nnn^{U_1}$.
%
\end{lemma}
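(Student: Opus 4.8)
The plan is to reduce both parts to the single identity \eqref{5.4}, namely $\tau_2\circ P_2=(P_1)_*$, together with the $\nnn$-flatness of $U_1$; after that, part (a) is linear algebra and part (b) is a matching of flat frames. Throughout I write $\phi:=pr_1\circ P_1$ for the map appearing in $(ii)$, which by $(ii)$ is an embedding and hence locally an isomorphism onto $\mathcal{V}_0/(U_1)_0$.

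For part (a) I would first dispatch $(i)\Leftrightarrow(iii)$, which is purely bundle-theoretic. Since $\rk p^*F^2=\rk p^*U_1=n+1$ and $\rk p^*\mathcal{V}=2n+2$, the map $pr_2\colon p^*F^2\to p^*\mathcal{V}/p^*U_1$ is an isomorphism of bundles iff $p^*F^2\cap p^*U_1=\{0\}$, and as pulling back along $p$ preserves this fibrewise intersection, this is exactly $F^2\cap U_1=\{0\}$, i.e. $(i)$. The supplementary claim that $pr_2\circ P_2$ is then an isomorphism is immediate, since $P_2\colon TB\to p^*F^2$ is already an isomorphism by the preceding lemma. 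For $(i)\Leftrightarrow(ii)$ I would compute the differential of $\phi$: linearity of $pr_1$ gives $\phi_*=pr_1\circ(P_1)_*$, and by \eqref{5.4} the image of $(P_1)_*$ at $(\delta,b)$ is $\tau_2\bigl((p^*F^2)_{(\delta,b)}\bigr)$, the $\nnn$-flat shift of $F^2_b$ into $\mathcal{V}_0$. Because $U_1$ is $\nnn$-flat its shift is constantly $(U_1)_0$, so $\phi_*$ is injective at $(\delta,b)$ precisely when $F^2_b\cap(U_1)_b=\{0\}$; thus $\phi$ is an immersion near the central fibre iff $(i)$ holds at $0$, which gives both implications at the level of immersions. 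To upgrade ``immersion'' to ``embedding'' in the direction $(i)\Rightarrow(ii)$ I would argue exactly as for $P_1$ in the preceding lemma: on $p^{-1}(0)=F^3_0-\{0\}$ the map $\phi$ is the restriction of the injective linear map $pr_1|_{F^3_0}$ (injective since $F^3_0\subset F^2_0$ and $F^2_0\cap(U_1)_0=\{0\}$), and as $B_0$ is taken small, an immersion embedding the central fibre is an embedding.

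For part (b) the key observation is that both $\nnn^{U_1}$ and $\nnn'$ are the transport, along the same isomorphism $\phi_*$, of the canonical constant connection on the trivial bundle $\bigl(\mathcal{V}_0/(U_1)_0\bigr)\times B$. Because $U_1$ is $\nnn$-flat, under $\tau_2$ the subbundle $p^*U_1$ becomes the constant bundle $(U_1)_0\times B$, so $\tau_2$ descends to a trivialization $\bar\tau_2\colon p^*\mathcal{V}/p^*U_1\xrightarrow{\cong}\bigl(\mathcal{V}_0/(U_1)_0\bigr)\times B$ carrying the induced connection $\bar\nnn$ to the trivial one; moreover the projection $\mathrm{proj}\colon p^*\mathcal{V}\to p^*\mathcal{V}/p^*U_1$ corresponds to the fibrewise $pr_1$, i.e. $\bar\tau_2\circ\mathrm{proj}=pr_1\circ\tau_2$. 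Since $P_2$ lands in $p^*F^2$ we get, using \eqref{5.4} and linearity of $pr_1$, $\bar\tau_2\circ(pr_2\circ P_2)=pr_1\circ(\tau_2\circ P_2)=pr_1\circ(P_1)_*=\phi_*$. Hence under $\bar\tau_2$ the isomorphism $pr_2\circ P_2$ is exactly $\phi_*$, so $\nnn'$ is the transport of the trivial connection along $\phi_*$. On the other hand $\nnn^{U_1}$, being the flat structure pulled back from the vector space $\mathcal{V}_0/(U_1)_0$ via $\phi$, has for its flat vector fields precisely the $\phi_*$-preimages of the constant vector fields, so it too is the transport of the trivial connection along $\phi_*$. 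Therefore $\nnn'=\nnn^{U_1}$, and flatness and torsion-freeness of $\nnn^{U_1}$ are automatic, inherited from the affine structure of $\mathcal{V}_0/(U_1)_0$.

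The computations are routine; the only place demanding care is the bookkeeping of the three trivializations $\tau_2$, $\bar\tau_2$ and $\phi_*$, in particular the verification that $p^*U_1$ is genuinely constant under $\tau_2$ (which is exactly where $\nnn$-flatness of $U_1$ enters). I expect no real obstacle: once \eqref{5.4} is combined with the flatness of $U_1$, the equality $\nnn'=\nnn^{U_1}$ is forced, since both connections are characterized by the single flat frame obtained as the $\phi_*$-preimage of the constant vector fields on $\mathcal{V}_0/(U_1)_0$.
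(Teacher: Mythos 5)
Your proof is correct and takes essentially the same approach as the paper's: the identity $\bar\tau_2\circ(pr_2\circ P_2)=(pr_1\circ P_1)_*$ that you establish is exactly the paper's key equation \eqref{5.5} (your $\bar\tau_2$ is its $\tau_3$), and both arguments reduce (i)$\Leftrightarrow$(ii) to the behaviour of $pr_1\circ P_1$ along the central fibre $p^{-1}(0)$ and then read off part (b) from that identity. Your frame-matching argument for $\nnn'=\nnn^{U_1}$ merely spells out what the paper compresses into ``(b) follows from \eqref{5.5}.''
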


\begin{proof}
(a) (i)$\iff$(iii) is trivial.
The flat connection on $p^*\mathcal{V}/p^*U_1$ which is induced from $p^*\nnn$
on $p^*\mathcal{V}$, yields the trivialization
$\tau_3:p^*\mathcal{V}/p^*U_1\stackrel{\cong}{\to} \mathcal{V}_0/(U_1)_0\times B$
of the vector bundle $p^*\mathcal{V}/p^*U_1$.
Then 
\begin{equation}\label{5.5}
\tau_3\circ (pr_2\circ P_2) = (pr_1\circ P_1)_* 
\end{equation}
as maps from $TB$ to $(pr_1\circ P_1)^* T(\mathcal{V}_0/(U_1)_0) = \mathcal{V}_0/(U_1)_0\times B$.

(ii) is equivalent to two conditions:
First, that the restriction of $pr_1\circ P_1$ to $p^{-1}(0)$,
which is just the map
$$pr_1\circ P_1:F^3_0-\{0\}\hookrightarrow F^3_0\hookrightarrow \mathcal{V}_0
\to \mathcal{V}_0/(U_1)_0,$$
is an embedding, and second that the differential $(pr_1\circ P_1)_*$
at points of $p^{-1}(0)$ is an isomorphism.
The first condition is equivalent to $F^3_0\cap (U_1)_0 =\{0\}$
which is part of (i), and the second condition is equivalent to
(iii) and thus to (i), because of \eqref{5.5}.

(b) This follows from \eqref{5.5}.
\end{proof}

\subsection{Special coordinates}\label{c5.3}
The same situation as in \ref{c5.1} is considered.
The flat structure $\nnn^{U_1}$ on $B$ from an opposite subbundle
can be enriched by an additional choice, which leads to certain
flat coordinates, the {\it special coordinates}.

Now $a_1,...,a_{n+1},b_1,...,b_{n+1}$ are $\nnn$-flat sections of $\mathcal{V}$
which form a symplectic basis everywhere.
Then $U_1=\langle b_1,...,b_{n+1}\rangle$ and
$V_1=\langle a_1,...,a_{n+1}\rangle$ are $\nnn$-flat subbundles of rank
$n+1$ and with $S(U_1,U_1)=0=S(V_1,V_1)$.

By abuse of notation we write also $a_i$ for $p^*a_i$ and $b_i$ for
$p^*b_i$. There are unique functions $z_i,w_i\in \OO_{B}$, $i=1,...,n+1$,
with 
\begin{eqnarray}\label{5.7}
\sigma_{taut} =\sum_{i=1}^{n+1}z_i\cdot a_i + \sum_{i=1}^{n+1}w_i\cdot b_i.
\end{eqnarray}

\begin{lemma}The following properties hold true:
\begin{itemize}
\item[(a)] $\varepsilon(z_i)=z_i, \varepsilon(w_i)=w_i$.
If $z_1,...,z_{n+1}$ are coordinates on $B$, they are $\nnn^{U_1}$-flat.
Then $\varepsilon =\sum_{i=1}^{n+1}z_i\paa_{z_i}$. Furthermore,
\begin{eqnarray}\label{5.8}
z_1,...,z_{n+1}\textup{ are coordinates on }B
&\iff & U_1\textup{ is an opposite subbundle,}\\
w_1,...,w_{n+1}\textup{ are coordinates on }B\label{5.9}
&\iff & V_1\textup{ is an opposite subbundle}.
\end{eqnarray}
If $U_1$ is an opposite subbundle then $z_1,...,z_{n+1}$ are called
special coordinates. If additionally $V_1$ is an opposite subbundle
then $w_1,...,w_{n+1}$ are called adjoint special coordinates.
\item[(b)] Suppose that $U_1$ is an opposite subbundle. Then there is a 
unique function
$\Psi^{U_1,V_1}\in \OO_{B}$ with
\begin{eqnarray}\label{5.10}
\frac{\paa \Psi^{U_1,V_1}}{\paa z_i}&=&w_i\quad \textup{ for }i=1,...,n+1,\\
\textup{and }\quad \varepsilon(\Psi^{U_1,V_1}) &=&2\cdot \Psi^{U_1,V_1}.
\label{5.11}
\end{eqnarray}
It depends only on $U_1$ and $V_1$, not on the symplectic basis.
It is called a prepotential.
\item[(c)] Suppose that $U_1$ is an opposite subbundle and that
$V_1^0\in \mathcal{L}(\mathcal{V},U_1)$ where
\[
\begin{split}
\mathcal{L}(\mathcal{V},U_1) :=&\big\{V_1\subset \mathcal{V}~|~\nnn\textup{-flat subbundle of rank }n+1,\\
&\hspace{3cm}S(V_1,V_1)=0, \mathcal{V}=U_1\oplus V_1\big\}.
\end{split}
\]
$\C[z_1,...,z_{n+1}]_2$ denotes the polynomials homogeneous of degree 2.
Then $\Psi^{U_1,V_1}\in \Psi^{U_1,V_1^0}+\C[z_1,...,z_{n+1}]_2$, and the map
\begin{eqnarray}\nonumber
\mathcal{L}(\VV,U_1)\to \Psi^{U_1,V_1^0}+\C[z_1,...,z_{n+1}]_2,\quad V_1\mapsto \Psi^{U_1,V_1},
\end{eqnarray}
is a bijection.
\item[(d)] The class $\Psi^{U_1,V_1}+\C[z_1,...,z_{n+1}]_2$ of prepotentials
in (c) is characterized by the third derivatives 
$XYZ\Psi^{U_1,V_1}$ where $X,Y,Z\in \bigoplus_{i=1}^{n+1}\C\cdot\paa_{z_i}$
are flat vector fields, and these third derivatives are given by
\begin{eqnarray}\label{5.13}
-S(\sigma_{taut},\nnn_X\nnn_Y\nnn_Z\sigma_{taut})
=XYZ\Psi^{U_1,V_1}.
\end{eqnarray}
This is a coordinate free characterization of this class of prepotentials.
The class of prepotentials depends only on the flat structure
$\nnn^{U_1}$ on $B$.
\end{itemize} 
\end{lemma}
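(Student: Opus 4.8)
The plan is to read everything off the expansion \eqref{5.7} of $\sigma_{taut}$ together with the Lagrangian property $S(F^2,F^2)=0$ (this is \eqref{1.4} at $p=2$), noting that by Griffiths transversality and \eqref{5.3} both $\sigma_{taut}$ and every $(p^*\nnn)_X\sigma_{taut}$ lie in $p^*F^2$, so $S$ annihilates any pair of them. I normalise the symplectic basis by $S(a_i,b_j)=\delta_{ij}$, $S(a_i,a_j)=S(b_i,b_j)=0$. For (a): since $\varepsilon$ is tangent to the fibres of $p$ and the $a_i,b_i$ are pulled back from $B_0$, they are $(p^*\nnn)_\varepsilon$-flat; differentiating \eqref{5.7} and comparing with \eqref{5.1} gives $\varepsilon(z_i)=z_i$, $\varepsilon(w_i)=w_i$, whence $\varepsilon=\sum_i z_i\paa_{z_i}$ once the $z_i$ are coordinates. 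The key point is that flatness of the $a_i,b_i$ makes $P_1(\delta,b)=\sum_i z_i\,a_i(0)+\sum_i w_i\,b_i(0)$, so $pr_1\circ P_1=\sum_i z_i\,\overline{a_i(0)}$ is the linear-coordinate expression on $\mathcal{V}_0/(U_1)_0$; hence the $z_i$ are the $\nnn^{U_1}$-flat coordinates induced from that vector-space structure, and \eqref{5.8} is exactly the equivalence (i)$\Leftrightarrow$(ii) of the lemma in Section \ref{c5.2}. Statement \eqref{5.9} is the same argument with $U_1=\langle b_i\rangle$ and $V_1=\langle a_i\rangle$ exchanged.

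For (b): I would first obtain integrability. From $P_2(\paa_{z_i})=a_i+\sum_k(\paa_{z_i}w_k)b_k$, the identity $S(P_2(\paa_{z_i}),P_2(\paa_{z_j}))=0$ collapses to $\paa_{z_j}w_i-\paa_{z_i}w_j=0$, so on the small base a primitive exists. The correct $\varepsilon$-homogeneous one is $\Psi^{U_1,V_1}=\tfrac12\sum_i z_iw_i$: the companion identity $S(\sigma_{taut},P_2(\paa_{z_j}))=0$ reads $\sum_i z_i\paa_{z_j}w_i=w_j$, which gives $\paa_{z_j}(\tfrac12\sum_i z_iw_i)=w_j$, while the weights from (a) give $\varepsilon(\Psi)=2\Psi$. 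Uniqueness is immediate, since two primitives differ by a constant that $\varepsilon(\cdot)=2(\cdot)$ kills. Basis-independence is the short check that a symplectic change fixing $U_1$ and $V_1$ acts on $(a_i,b_i)$ by $(A,(A^{tr})^{-1})$, transforming $(z_i)$ and $(w_i)$ by matrices inverse-transpose to one another and so leaving $\sum_i z_iw_i$ invariant.

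For (c) and (d): I would parametrise $\mathcal{L}(\mathcal{V},U_1)$ by symmetric matrices, writing each Lagrangian complement as a graph $a_i'=a_i+\sum_j C_{ij}b_j$ over $V_1^0=\langle a_i\rangle$, where $S(V_1,V_1)=0$ forces $C=C^{tr}$ and $(a_i',b_i)$ stays symplectic. Re-expanding \eqref{5.7} leaves $z_i'=z_i$ and yields $w_j'=w_j-\sum_i C_{ij}z_i$, so by (b) one gets $\Psi^{U_1,V_1}=\Psi^{U_1,V_1^0}-\tfrac12\sum_{i,j}C_{ij}z_iz_j$; since $C\mapsto-\tfrac12\sum_{i,j}C_{ij}z_iz_j$ is a linear isomorphism onto $\C[z_1,\dots,z_{n+1}]_2$, part (c) follows. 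For (d), two degree-$2$ homogeneous functions with equal third $z$-derivatives differ by a polynomial of degree $\le 2$ that is itself $\varepsilon$-homogeneous of degree $2$, hence lies in $\C[z_1,\dots,z_{n+1}]_2$, which gives the characterisation. Then \eqref{5.13} is checked by differentiating \eqref{5.7} three times, $\nnn_X\nnn_Y\nnn_Z\sigma_{taut}=\sum_l(\paa_{z_i}\paa_{z_j}\paa_{z_k}\paa_{z_l}\Psi)\,b_l$, pairing with $\sigma_{taut}$ via $S(\sigma_{taut},b_l)=z_l$, and using that $\paa_{z_i}\paa_{z_j}\paa_{z_k}\Psi$ is $\varepsilon$-homogeneous of degree $-1$, so that $\sum_l z_l\paa_{z_l}(\paa_{z_i}\paa_{z_j}\paa_{z_k}\Psi)=-\paa_{z_i}\paa_{z_j}\paa_{z_k}\Psi$. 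Coordinate-freeness and dependence only on $\nnn^{U_1}$ follow because the right-hand side of \eqref{5.13} uses only $S$, $\nnn$, $\sigma_{taut}$ and $\nnn^{U_1}$-flat fields, while $\C[z_1,\dots,z_{n+1}]_2$ is intrinsic to the linear structure of $\mathcal{V}_0/(U_1)_0$.

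I expect the main obstacle to be organising (b): both the integrability condition and the selection of the unique $\varepsilon$-homogeneous primitive $\tfrac12\sum_i z_iw_i$ rest on using $S(F^2,F^2)=0$ in two distinct guises, namely $S(P_2,P_2)=0$ for symmetry of $\paa_{z_i}w_j$ and $S(\sigma_{taut},P_2)=0$ for the Euler identity, and keeping the symplectic sign conventions consistent across (b)--(d) is where slips are easiest.
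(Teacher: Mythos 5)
Your proposal is correct, and its skeleton coincides with the paper's proof: part (a) via the explicit form of $pr_1\circ P_1$ and the lemma of section \ref{c5.2}, integrability of $w_i$ from $S(F^2,F^2)=0$ applied to $\nnn_{\paa_{z_i}}\sigma_{taut}$, the symmetric-matrix parametrization of $\mathcal{L}(\mathcal{V},U_1)$ for (c), and the same symplectic normalization throughout. There are two spots where your execution genuinely differs, both fine. In (b), the paper produces the homogeneous primitive as $\tfrac12\varepsilon(\Psi)$ for an arbitrary primitive $\Psi$, using the commutator identity $\paa_{z_i}\varepsilon(\Psi)=\paa_{z_i}\Psi+\varepsilon(w_i)=2w_i$; you instead write down the closed form $\tfrac12\sum_i z_iw_i$ and verify it with a second application of the Lagrangian condition, namely $S(\sigma_{taut},\nnn_{\paa_{z_j}}\sigma_{taut})=0$, i.e.\ $\sum_i z_i\paa_{z_j}w_i=w_j$. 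These are the same function (since $\tfrac12\varepsilon(\Psi)=\tfrac12\sum_i z_i w_i$), but your route has the advantage of exhibiting the classical special-geometry formula $\Psi=\tfrac12\sum_i z_iw_i$ explicitly, at the cost of one extra use of $S(F^2,F^2)=0$. In (d), the paper obtains \eqref{5.13} by differentiating the vanishing pairing $0=S(\sigma_{taut},\nnn_{\paa_{z_j}}\nnn_{\paa_{z_k}}\sigma_{taut})$ (which follows from $S(F^3,F^1)=0$) and applying Leibniz, so that the left side of \eqref{5.13} becomes $S(\nnn_{\paa_{z_i}}\sigma_{taut},\nnn_{\paa_{z_j}}\nnn_{\paa_{z_k}}\sigma_{taut})=\paa_{z_i}\paa_{z_j}\paa_{z_k}\Psi$; you instead compute $\nnn_X\nnn_Y\nnn_Z\sigma_{taut}=\sum_l(\paa_{z_i}\paa_{z_j}\paa_{z_k}\paa_{z_l}\Psi)b_l$ directly, pair with $\sigma_{taut}$, and use that $\paa_{z_i}\paa_{z_j}\paa_{z_k}\Psi$ is $\varepsilon$-homogeneous of degree $-1$. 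Your calculation is correct but leans on the homogeneity \eqref{5.11} established in (b), whereas the paper's Leibniz trick does not need it; on the other hand you make explicit the easy argument (omitted in the paper) that equal third derivatives plus $\varepsilon$-homogeneity pin down the prepotential exactly modulo $\C[z_1,\dots,z_{n+1}]_2$, which is what "characterized" in (d) requires.
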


\begin{proof}
(a) \eqref{5.1} gives $\varepsilon(z_i)=z_i$, $\varepsilon(w_i)=w_i$.
The map $pr_1\circ P_1$ from section \ref{c5.2} is now explicitly
\begin{eqnarray}\label{5.14} 
pr_1\circ P_1 :B\to \mathcal{V}_0/(U_1)_0,\quad
(\delta,b)\mapsto \sum_{i=1}^{n+1}z_i\cdot [a_i].
\end{eqnarray}
It is an embedding iff $z_1,...,z_{n+1}$ are coordinates on $B$.
Lemma \ref{c5.2} applies and gives \eqref{5.8}.

If $z_1,...,z_{n+1}$ are coordinates, they are $\nnn^{U_1}$-flat because
of \eqref{5.14}.
In that case $\varepsilon = \sum_{i=1}^{n+1}\varepsilon(z_i)\paa_{z_i}
=\sum_{i=1}^{n+1}z_i\paa_{z_i}$.
\eqref{5.9} is analogous to \eqref{5.8}.

(b) $\nnn_{\paa_{z_i}}\sigma_{taut} 
= a_i+\sum_{j=1}^{n+1}\frac{\paa w_j}{\paa z_i}\cdot b_j$ is a section
in $p^*F^2$, and $S(F^2,F^2)=0$, so 
\begin{eqnarray}\nonumber
0=S(\nnn_{\paa_{z_i}}\sigma_{taut},\nnn_{\paa_{z_j}}\sigma_{taut})
=\frac{\paa w_i}{\paa z_j}S(a_i,b_i)
+\frac{\paa w_j}{\paa z_i}S(b_j,a_j)
=\frac{\paa w_i}{\paa z_j}-\frac{\paa w_j}{\paa z_i}.
\end{eqnarray}
There exists a function $\Psi\in \OO_{B}$ with 
$\frac{\paa \Psi}{\paa z_i}=w_i$. It is unique up to addition of a constant.
It is claimed that there is exactly one function
$\Psi^{U_1,V_1}$ in this class with 
$\varepsilon(\Psi^{U_1,V_1})=2\cdot \Psi^{U_1,V_1}$.
Obviously there exists at most one  such function.
For the existence observe
\begin{eqnarray}\nonumber
\paa_{z_i}\varepsilon(\Psi) = [\paa_{z_i},\varepsilon](\Psi)+
\varepsilon\paa_{z_i}(\Psi) = \paa_{z_i}(\Psi)+\varepsilon(w_i)
=2w_i.
\end{eqnarray}
Therefore $\frac{1}{2}\varepsilon(\Psi)$ is also in the class.
Because of $\frac{1}{2}\varepsilon(\Psi)=\Psi+constant$,
$\varepsilon(\frac{1}{2}\varepsilon(\Psi))=\varepsilon(\Psi)$,
so $\frac{1}{2}\varepsilon(\Psi)$ is the desired function
$\Psi^{U_1,V_1}$.

For the independence of the symplectic basis, consider a symplectic
base change which fixes $U_1$ and $V_1$,
\begin{eqnarray}\nonumber
(a_1',...,a_{n+1}')&=&(a_1,...,a_{n+1})\cdot A,\\
(b_1',...,b_{n+1}')&=&(b_1,...,b_{n+1})\cdot (A^{tr})^{-1}
\quad \textup{ with }A\in GL(n+1,\C).\nonumber
\end{eqnarray}
Then $(z_1',...,z_{n+1}')=(z_1,...,z_{n+1})\cdot (A^{tr})^{-1}$,
$(w_1',...,w_{n+1}')=(w_1,...,w_{n+1})\cdot A$,
$(\paa_{z_1'},...,\paa_{z_{n+1}'}) 
=(\paa_{z_1},...,\paa_{z_{n+1}})\cdot A$, and thus
$(\paa_{z_1'},...,\paa_{z_{n+1}'})(\Psi)=(w_1',...,w_{n+1}')$,
so $\Psi'=\Psi$.
Therefore $\Psi^{U_1,V_1}$ depends only on $U_1$ and $V_1$, not on the 
symplectic basis.

(c) Suppose that $a_1,...a_{n+1},b_1,...,b_{n+1}$, 
$U_1=\langle b_1,...,b_{n+1}\rangle$ and 
$V_1=\langle a_1,...,a_{n+1}\rangle$ are given, with $U_1$ an opposite
subbundle.
For any $V_1'\in \mathcal{L}(\VV,U_1)$ there are unique $a_1',...,a_{n+1}'\in V_1'$
such that $a_1',...,a_{n+1}',b_1,...,b_{n+1}$ are a symplectic basis
and 
$$(a_1',...,a_{n+1}')=(a_1,...,a_{n+1}) + (b_1,...,b_{n+1})\cdot A.$$
Then $A=A^{tr}$.

The corresponding map
$$\{A\in M((n+1)\times (n+1),\C) |\ A=A^{tr}\}\to\VV$$
is a bijection. This and the following formulas give the claimed
1-1 correspondence,
\begin{eqnarray}\nonumber
\sigma_{taut}&=& \sum_{i=1}^{n+1}z_ia_i+\sum_{i=1}^{n+1}w_ib_i
=\sum_{i=1}^{n+1}z_ia_i'
+\sum_{i=1}^{n+1}(w_i-\sum_{j=1}^{n+1}A_{ji}z_j)b_i,\\
w_i'&=& w_i-\sum_{j=1}^{n+1}A_{ji}z_j,\nonumber\\
\Psi^{U_1,V_1'}&=& \Psi^{U_1,V_1}-\frac{1}{2}\sum_{i,j}A_{ij}z_iz_j.
\nonumber
\end{eqnarray}
(d) Derivation of 
$0=S(\sigma_{taut},\nnn_{\paa_{z_j}}\nnn_{\paa_{z_k}}\sigma_{taut})$ 
by $\paa_{z_i}$ gives
\[
\begin{split}
-S(\sigma_{taut},
\nnn_{\paa_{z_i}}\nnn_{\paa_{z_j}}\nnn_{\paa_{z_k}}\sigma_{taut})
&=S(\nnn_{\paa_{z_i}}\sigma_{taut},
\nnn_{\paa_{z_j}}\nnn_{\paa_{z_k}}\sigma_{taut})\\
&= S\left(a_i+\sum_{m=1}^{n+1}(\paa_{z_i}\paa_{z_m}\Psi)\cdot b_m,
\sum_{l=1}^{n+1}(\paa_{z_j}\paa_{z_k}\paa_{z_l}\Psi)\cdot b_l\right)\nonumber\\
&=\paa_{z_i}\paa_{z_j}\paa_{z_k}\Psi.
\end{split}
\]
This completes the proof.
\end{proof}

\subsection{Data involving the real structure}\label{c5.4}
Now let $(B_0,\mathcal{V},\nnn,\mathcal{V}_\R,S,F^\bullet)$ be a VPHS of weight $w=3$
which satisfies the CY-condition \eqref{1.7} \& \eqref{1.8}.
In the sections \ref{c5.1} to \ref{c5.3} we concentrated on one 
purely holomorphic aspect of projective special geometry,
the flat structure and special coordinates after choosing 
$U_1$ and $a_1,...,a_{n+1},b_1,...,b_{n+1}$.

For the sake of completeness here we discuss another datum, a connection
$\nnn^{psg}$ on $T^\C B$ which involves the real structure.
A third aspect, a hermitian pairing from the polarization will not be
discussed here.
In the following $TB=T^{1,0}B$, $T^\C B=T^{1,0}B\oplus T^{0,1}B$
and $T^\R B$ will be used.

The period map $P_2$ and the real structure $\mathcal{V}_\R$ induce an extended
period map
\begin{eqnarray}\label{5.16}
P_3:T^\C  &\to& p^*\mathcal{V},\\
P_3=P_2:T^{1,0}B &\to& p^*F^2, \nonumber\\
P_3=''\overline{P_2}'':T^{0,1}B &\to& \overline{p^*F^2},\quad
X\mapsto \overline{P_2(\overline X)}.\nonumber
\end{eqnarray}

\begin{lemma} For this period map we have:
\begin{itemize}
\item[(a)] $P_3$ is an isomorphism of $\C$-vector bundles.
It respects the real structures, i.e. it maps $T^\R B$ to $p^*\mathcal{V}_\R$.
\item[(b)] Let $\nnn^{psg}$ be the connection on $T^\C B$ induced
by $p^*\nnn$ via $P_3$. It is flat and thus gives $T^\C B$ the 
structure of a holomorphic vector bundle.
Of course, the subbundles $P_3^*(p^*F^3)$, $P_3^*(p^*F^2)=T^{1,0}B$
and $P_3^*(p^*F^1)$ of $T^\C B$ are holomorphic subbundles
with respect to this holomorphic structure.
The connection $\nnn^{psg}$ is torsion free.
\end{itemize}
\end{lemma}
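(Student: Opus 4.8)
The plan for (a) is to read off everything from the Hodge decomposition \eqref{1.2}, which for $w=3$ and $p=2$ reads $\mathcal{V}=F^2\oplus\overline{F^2}$, hence $p^*\mathcal{V}=p^*F^2\oplus\overline{p^*F^2}$. Under $T^\C B=T^{1,0}B\oplus T^{0,1}B$ the map $P_3$ from \eqref{5.16} is by construction the direct sum $P_3=P_2\oplus\overline{P_2}$. Part (c) of the first lemma of Section~\ref{c5.1} gives that $P_2\colon T^{1,0}B\to p^*F^2$ is an isomorphism; conjugating, $\overline{P_2}\colon T^{0,1}B\to\overline{p^*F^2}$ is one as well, so $P_3$ is an isomorphism of $\C$-vector bundles. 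For the real structure I would write a real tangent vector as $\xi=X+\overline X$ with $X=\xi^{1,0}\in T^{1,0}B$; then $P_3(\xi)=P_2(X)+\overline{P_2(X)}$ is fixed by the conjugation of $p^*\mathcal{V}$ defined by $p^*\mathcal{V}_\R$, hence lies in $p^*\mathcal{V}_\R$. Thus $P_3(T^\R B)\subset p^*\mathcal{V}_\R$, and surjectivity follows by equality of ranks.

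For (b), the connection $\nnn^{psg}$ is the transfer of $p^*\nnn$ along $P_3$, i.e. $\nnn^{psg}_Z s=P_3^{-1}\big((p^*\nnn)_Z(P_3 s)\big)$. Flatness is then immediate, since the curvature transforms by conjugation with $P_3$ and $p^*\nnn$ is flat, being pulled back from the flat $\nnn$. Taking $(0,1)$-parts gives $(\nnn^{psg})^{0,1}=P_3^{-1}\circ\overline\partial_{p^*\mathcal{V}}\circ P_3$, so $P_3$ is holomorphic from $(T^\C B,\nnn^{psg})$ to $(p^*\mathcal{V},\overline\partial)$. As $F^3\subset F^2\subset F^1$ are holomorphic subbundles of $\mathcal{V}$, the pull-backs $p^*F^p$ are holomorphic subbundles of $p^*\mathcal{V}$, so $P_3^{-1}(p^*F^p)$ are holomorphic subbundles of $(T^\C B,\nnn^{psg})$; in particular $P_3^{-1}(p^*F^2)=T^{1,0}B$, as asserted.

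The genuinely substantial point is that $\nnn^{psg}$ is torsion free. Applying $P_3$ to $T(X,Y)=\nnn^{psg}_XY-\nnn^{psg}_YX-[X,Y]$ reduces this to
\[
(p^*\nnn)_X(P_3Y)-(p^*\nnn)_Y(P_3X)=P_3([X,Y])\qquad(X,Y\in T^\C B),
\]
which by $\C$-bilinearity I would check on the three bidegree cases. The two pure cases are easy: if $X,Y$ are both of type $(1,0)$ then $P_3=P_2=(p^*\nnn)_\bullet\sigma_{taut}$ by \eqref{5.3}, $[X,Y]$ is again of type $(1,0)$, and flatness of $p^*\nnn$ turns the left-hand side into $(p^*\nnn)_{[X,Y]}\sigma_{taut}=P_3([X,Y])$; the type $(0,1)$ case is the complex conjugate statement, using that conjugation commutes with $p^*\nnn$ (as $\mathcal{V}_\R$ is $\nnn$-flat) together with the identity $P_3Y=(p^*\nnn)_Y\overline{\sigma_{taut}}$ valid on $T^{0,1}B$.

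The mixed case $X\in T^{1,0}B$, $Y\in T^{0,1}B$ is the crux and the step I expect to be the main obstacle, because $[X,Y]$ now has both a $(1,0)$- and a $(0,1)$-component and the two terms on the left involve the \emph{different} sections $\sigma_{taut}$ and $\overline{\sigma_{taut}}$. Here I would invoke that $\sigma_{taut}$, being a section of the holomorphic bundle $p^*F^3$, is holomorphic, so $(p^*\nnn)_Y\sigma_{taut}=0$ and, by conjugation, $(p^*\nnn)_X\overline{\sigma_{taut}}=0$. Flatness then collapses
\[
(p^*\nnn)_X(P_3Y)=(p^*\nnn)_X(p^*\nnn)_Y\overline{\sigma_{taut}}=(p^*\nnn)_{[X,Y]}\overline{\sigma_{taut}},
\]
while
\[
(p^*\nnn)_Y(P_3X)=(p^*\nnn)_Y(p^*\nnn)_X\sigma_{taut}=-(p^*\nnn)_{[X,Y]}\sigma_{taut},
\]
so the left-hand side equals $(p^*\nnn)_{[X,Y]}\overline{\sigma_{taut}}+(p^*\nnn)_{[X,Y]}\sigma_{taut}$. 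Writing $[X,Y]=Z_1+Z_0$ with $Z_1$ of type $(1,0)$ and $Z_0$ of type $(0,1)$, the vanishings $(p^*\nnn)_{Z_0}\sigma_{taut}=0$ and $(p^*\nnn)_{Z_1}\overline{\sigma_{taut}}=0$ give $(p^*\nnn)_{[X,Y]}\sigma_{taut}=P_2(Z_1)=P_3(Z_1)$ and $(p^*\nnn)_{[X,Y]}\overline{\sigma_{taut}}=\overline{P_2(\overline{Z_0})}=P_3(Z_0)$, whence the sum is $P_3(Z_1)+P_3(Z_0)=P_3([X,Y])$, as required. With all three cases settled the torsion vanishes identically, completing (b).
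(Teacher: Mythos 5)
Your proof is correct, and it supplies more than the paper itself does: the paper's entire ``proof'' of this lemma is to declare part (a) obvious after the period-map lemma of section \ref{c5.1} and to leave the torsion-freeness in part (b) to the reader as classical. Your part (a) is exactly the argument the paper gestures at: the weight-three Hodge decomposition $\mathcal{V}=F^2\oplus\overline{F^2}$ splits $P_3$ as $P_2\oplus\overline{P_2}$, part (c) of that lemma makes each summand an isomorphism, and the identity $P_3(X+\overline{X})=P_2(X)+\overline{P_2(X)}$ handles the real structure. In part (b), flatness by conjugation with the isomorphism $P_3$ is immediate, and your torsion computation is sound: the reduction to $(p^*\nnn)_X(P_3Y)-(p^*\nnn)_Y(P_3X)=P_3([X,Y])$, checked by bidegree, works because the pure cases follow from flatness of $p^*\nnn$ together with integrability of $T^{1,0}B$, while the mixed case rests on the vanishings $(p^*\nnn)_Y\sigma_{taut}=0$ for $Y\in T^{0,1}B$ and $(p^*\nnn)_X\overline{\sigma_{taut}}=0$ for $X\in T^{1,0}B$. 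The one step you use tacitly in several places --- that $p^*\nnn$, a priori a holomorphic flat connection, extends to a smooth flat connection whose $(0,1)$-part is the Dolbeault operator of $p^*\mathcal{V}$ (so holomorphy of $\sigma_{taut}$ really does give $(p^*\nnn)_Y\sigma_{taut}=0$), combined with reality of the connection coming from the $\nnn$-flatness of $\mathcal{V}_\R$ (so conjugation commutes with $p^*\nnn$) --- deserves one explicit sentence, since it is also what makes $\nnn^{psg}$ a connection on the smooth bundle $T^\C B$ in the first place. With that made explicit, your three-case analysis is a complete proof of the statement the paper omits.
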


\begin{proof}
Part (a) is obvious after lemma 4.1 (c), 
in part (b) only the torsion freeness of $\nnn^{psg}$ is nontrivial.
As it is classical and we will not use it, we leave the proof to the reader.
\end{proof}

\begin{remark}
Let $J:T^\R B\to T^\R B$ with $J^2=-\id$ give the complex structure
on $B$. The condition that $T^{1,0} B\subset T^\C B$ is a holomorphic
subbundle with respect to the holomorphic structure on $T^\C B$ from
$\nnn^{psg}$ is equivalent to
\begin{eqnarray}\label{5.17}
\left(\nnn^{psg}_X J\right)(Y) = \left(\nnn^{psg}_Y J\right)(X)
\quad\textup{ for } X,Y\in T^\C_{B}
\end{eqnarray}
\cite[Lemma 3.6]{HERT:2003}.
The condition \eqref{5.17} is often used as defining condition for 
affine special geometry.
Thus affine special geometry on a manifold $M$ means that there is a 
torsion free and flat connection which together with the (Hodge)
decomposition $T^\C M=T^{1,0}M\oplus T^{0,1}M$ and the real subbundle
$T^\R M$ yields a variation of Hodge structures of weight 1 on the complex
tangent bundle $T^\C M$ \cite[Proposition 3.7]{HERT:2003}.
Of course, in the present situation this holds,
projective special geometry includes affine special geometry.
The Hitchin system on the other hand exhibits the opposite behaviour: we will show that
the natural affine special geometry refines to a projective one.
\end{remark}

\section{Comparison}\label{c6}
\setcounter{equation}{0}

\noindent 
Let $(B_0,\mathcal{V},\nnn,S,F^\bullet)$ be a variation of Hodge like filtrations
with pairing of weight 3 which satisfies the CY-condition 
\eqref{1.7} \& \eqref{1.8}, with $n=\dim B_0$.
As always, $B_0$ is supposed to be small, a germ of a manifold
at a base point $0\in B_0$.

In section \ref{c2} \& \ref{c3} we discussed a manifold $M\supset B_0$
of dimension $2n+2$ and Frobenius manifold structures on it
depending on a choice $(U_\bullet, \lambda_0)$,
where $U_\bullet$ is an opposite filtration and $\lambda_0\in F^3_0-\{0\}$.

In section \ref{c5} we discussed a manifold $B$ of dimension $n+1$ 
which is a $\C^*$-bundle on $B_0$, and a holomorphic aspect of 
projective special geometry, a flat structure (and special coordinates)
depending on a choice of an opposite subbundle $U_1$.

Now the constructions and data will be compared.

\subsection{Choice of $U_0$ and $U_2$}\label{c6.1}
In the first lemma 
we start with $B$ and a choice of the subbundles $U_0$ and $U_2$
of an opposite filtration $U_\bullet$. 
In the second lemma $U_1$ will be added.

\begin{lemma}
Let $U_0$ and $U_2$ be flat subbundles of $\mathcal{V}$ with 
\begin{eqnarray}\label{6.1}
U_0=(U_2)^{\perp_S},\ U_2=(U_0)^{\perp_S},\ \rank U_0=1,\ 
\rank U_2=2n+1.
\end{eqnarray}
\begin{itemize}
\item[(a)] Then $F^3+U_2=\mathcal{V} \iff F^1+U_0=\mathcal{V}$.
\item[(b)] Suppose that $F^3+U_2=\mathcal{V}$. The flat connection on the quotient bundle
$\mathcal{V}/U_2$ and the isomorphism $F^3\hookrightarrow \mathcal{V}\to \mathcal{V}/U_2$ yield a 
flat connection on $F^3$ and a trivialization 
$\tau_4:F^3\to F^3_0\times B_0$.
This restricts to a trivialization
\begin{eqnarray}\label{6.2}
P^{U_2}: B\to (F^3_0-\{0\})\times B_0
\end{eqnarray}
of the $\C^*$-bundle $B=F^3-\{\textup{zero section}\}$.
\item[(c)] The additional choice $\lambda_0\in F^3_0-\{0\}$ distinguishes a hypersurface
$(P^{U_2})^{-1}(\{\lambda_0\}\times B_0)\cong B_0$ in $B$.
\end{itemize}
\end{lemma}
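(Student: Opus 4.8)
The plan is to treat (a) as pure symplectic linear algebra over the small base $B_0$, and then to obtain (b) and (c) almost for free from the splitting it produces, together with flatness and contractibility of $B_0$.

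For part (a), the first step is to identify the two relevant $S$-orthogonal complements. Since $S$ is nondegenerate, $\rank A + \rank A^{\perp_S} = 2n+2$ for every subbundle $A$, and $(A+C)^{\perp_S} = A^{\perp_S}\cap C^{\perp_S}$. The polarization condition \eqref{1.4} with $w=3$ gives $S(F^3,F^1)=0$, hence $F^1\subseteq (F^3)^{\perp_S}$; comparing ranks ($\rank F^1 = 2n+1 = \rank (F^3)^{\perp_S}$) forces $(F^3)^{\perp_S}=F^1$. By hypothesis \eqref{6.1} we already have $(U_2)^{\perp_S}=U_0$. Taking $S$-orthogonal complements then yields
\[
(F^3+U_2)^{\perp_S} = (F^3)^{\perp_S}\cap (U_2)^{\perp_S} = F^1\cap U_0 ,
\]
so $F^3+U_2=\mathcal{V}$ is equivalent to $F^1\cap U_0=\{0\}$. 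Since $\rank F^1 + \rank U_0 = (2n+1)+1 = 2n+2 = \rank\mathcal{V}$, the condition $F^1\cap U_0=\{0\}$ is in turn equivalent to $F^1+U_0=\mathcal{V}$, which proves (a). (By the same rank count, $F^3+U_2=\mathcal{V}$ is itself equivalent to $F^3\cap U_2=\{0\}$.)

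For part (b), assuming $F^3+U_2=\mathcal{V}$, the rank count gives a direct-sum decomposition $\mathcal{V}=F^3\oplus U_2$, so the composition $F^3\hookrightarrow\mathcal{V}\to\mathcal{V}/U_2$ is an isomorphism of line bundles. Because $U_2$ is $\nnn$-flat, $\nnn$ descends to a flat connection on the quotient $\mathcal{V}/U_2$; transporting it through this isomorphism endows $F^3$ with a flat connection. As $B_0$ is small and hence contractible, the resulting flat line bundle is trivial, which produces the trivialization $\tau_4:F^3\to F^3_0\times B_0$ whose restriction to the complement of the zero section is the map $P^{U_2}$ of \eqref{6.2}.

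Part (c) is then immediate: fixing $\lambda_0\in F^3_0-\{0\}$, the slice $\{\lambda_0\}\times B_0$ is the graph of a constant section, so its preimage $(P^{U_2})^{-1}(\{\lambda_0\}\times B_0)$ is a holomorphic section of the $\C^*$-bundle $p:B\to B_0$, hence a codimension-one submanifold of $B$ isomorphic to $B_0$. There is no genuine obstacle in this lemma; the only point requiring care is the identification $(F^3)^{\perp_S}=F^1$ and keeping the rank bookkeeping straight, so that each ``intersection is zero'' statement can be upgraded to the corresponding ``sum is everything'' statement and vice versa.
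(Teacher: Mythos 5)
Your proof is correct and takes essentially the same approach as the paper: part (a) is the identical $S$-duality-plus-rank-count argument (the paper dualizes the intersection, $(F^3\cap U_2)^{\perp_S}=(F^3)^{\perp_S}+(U_2)^{\perp_S}=F^1+U_0$, while you dualize the sum, $(F^3+U_2)^{\perp_S}=F^1\cap U_0$ --- two faces of the same computation, both resting on $(F^3)^{\perp_S}=F^1$ and rank bookkeeping). Parts (b) and (c), which the paper dismisses as clear, you fill in correctly.
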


\begin{proof}
For (a) remark $(F^3\cap U_2)^{\perp_S}=(F^3)^{\perp_S}+U_2^{\perp_S}=F^1+U_0$ 
and $F^3+U_2=\mathcal{V}\iff F^3\cap U_2=\{\textup{zero section}\}$.
(b) and (c) are clear.
\end{proof}

\begin{lemma} Let $(B_0,\mathcal{V},\nnn,S,F^\bullet)$ be a variation of Hodge like filtrations
with pairing of weight 3 satisfying the CY-condition. 
\begin{itemize}
\item[(a)] The choice of $U_0$ and $U_2$ with \eqref{6.1} and 
$F^3+U_2=\mathcal{V}$  and the choice of an opposite subbundle $U_1$ are together
just the choice of an opposite filtration $U_\bullet$.
\item[(b)] Suppose that such a choice is made.
Then the hypersurfaces
$(P^{U_2})^{-1}(\{\lambda_0\}\times B_0)\subset B$, $\lambda_0\in F^3_0-\{0\}$,
are $\nnn^{U_1}$-flat hyperplanes of $B$,
and they all induce the same flat structure on $B_0$.
\end{itemize}
\end{lemma}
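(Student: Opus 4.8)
I would handle the two parts separately, both through the special-coordinate picture of \S\ref{c5.3}. For part (a), I first make the notion of opposite filtration concrete for $w=3$: by \eqref{1.10} and \eqref{1.11} it amounts to a flag $U_0\subset U_1\subset U_2$ of $\nnn$-flat subbundles of ranks $1,n+1,2n+1$ with the transversalities $\mathcal{V}=F^1\oplus U_0=F^2\oplus U_1=F^3\oplus U_2$ and the isotropies $S(U_0,U_2)=0=S(U_1,U_1)$. The claim is that these conditions split exactly into the stated package for $(U_0,U_2)$ and the requirement that $U_1$ be an opposite subbundle, linked only by the nesting. I would record the two redundancies that make this worth stating: from $S(U_0,U_2)=0$ together with $\rank U_0=1$ and $\rank U_2=2n+1$ one gets $U_0=(U_2)^{\perp_S}$ and $U_2=(U_0)^{\perp_S}$, so the perpendicularities in \eqref{6.1} are automatic; and by part (a) of the preceding lemma the transversality $F^3\oplus U_2=\mathcal{V}$ is equivalent to $F^1\oplus U_0=\mathcal{V}$. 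The nesting $U_0\subseteq U_1\subseteq U_2$ is then the only genuinely linking datum, and since $U_1$ is Lagrangian ($U_1=(U_1)^{\perp_S}$) while $U_0=(U_2)^{\perp_S}$, it reduces to the single inclusion $U_0\subseteq U_1$.

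For part (b) the plan is to pin down the hypersurfaces in special coordinates. Using the nesting supplied by (a), I would choose a $\nnn$-flat symplectic basis $a_1,\dots,a_{n+1},b_1,\dots,b_{n+1}$ adapted to $U_\bullet$ with $U_1=\langle b_1,\dots,b_{n+1}\rangle$ and $U_0=\langle b_1\rangle$; then necessarily $U_2=(U_0)^{\perp_S}=\langle a_2,\dots,a_{n+1},b_1,\dots,b_{n+1}\rangle$. Writing $\sigma_{taut}=\sum_i z_i a_i+\sum_i w_i b_i$ as in \eqref{5.7}, reduction modulo $U_2$ annihilates every term except $z_1 a_1$, so the image of $\sigma_{taut}$ in $\mathcal{V}/U_2$ is $z_1[a_1]$. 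Unwinding the trivialization $P^{U_2}$ of \eqref{6.2}, this gives $P^{U_2}(\delta,b)=(z_1(\delta,b)\cdot\delta_0^*,\,b)$ with $\delta_0^*\in F^3_0$ the generator satisfying $[\delta_0^*]=[a_1]$ in $\mathcal{V}_0/(U_2)_0$; hence for $\lambda_0=c\,\delta_0^*$ the hypersurface $(P^{U_2})^{-1}(\{\lambda_0\}\times B_0)$ is exactly the level set $\{z_1=c\}$ in $B$.

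Flatness and independence then follow quickly. Since $U_1$ is an opposite subbundle, \eqref{5.8} and part (a) of the lemma in \S\ref{c5.3} make $z_1,\dots,z_{n+1}$ into $\nnn^{U_1}$-flat coordinates on $B$, so each $\{z_1=c\}$ is a $\nnn^{U_1}$-flat hyperplane carrying the flat coordinates $z_2,\dots,z_{n+1}$; as $P^{U_2}$ is a trivialization over $B_0$, the projection $p$ restricts to an isomorphism of this hyperplane with $B_0$ and transports those to flat coordinates there. For the $\lambda_0$-independence I would invoke the fibrewise $\C^*$-action generated by $\varepsilon$: by \eqref{5.1} one has $\varepsilon(z_i)=z_i$, so its flow sends $z_i\mapsto r z_i$ with a common factor $r$; scaling by $r=c'/c$ then maps $\{z_1=c\}$ onto $\{z_1=c'\}$, commutes with $p$ (and so induces $\id_{B_0}$ on the identifications with $B_0$), and multiplies each transported coordinate by the constant $r$. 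A uniform rescaling of flat coordinates leaves the flat connection unchanged, so all the $\lambda_0$ yield one and the same flat structure on $B_0$.

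The linear algebra in (a) and the bookkeeping in (b) are routine; the one step I expect to require care---and the sole place where the output of (a) is really used---is the identification $(P^{U_2})^{-1}(\{\lambda_0\}\times B_0)=\{z_1=c\}$. It rests on aligning the two auxiliary structures by choosing the basis so that the flat line $U_0\subset U_1$ is a single $b_1$, which is what forces $a_1$ to be the unique coordinate direction detecting $\mathcal{V}/U_2$ and makes $P^{U_2}$ factor through the single special coordinate $z_1$. Without the nesting $U_0\subset U_1\subset U_2$ this alignment breaks and $P^{U_2}$ need not be a level set of any one $z_i$.
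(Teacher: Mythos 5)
Your proof is correct and is essentially the paper's argument written out in coordinates: the paper proves (b) in one line by noting that the embedding $pr_1\circ P_1:B\to\mathcal{V}_0/(U_1)_0$ of lemma \ref{c5.2} carries the hypersurfaces $(P^{U_2})^{-1}(\{\lambda_0\}\times B_0)$ to the parallel affine hyperplanes $[\lambda_0]+(U_2)_0/(U_1)_0$, and your identification of these hypersurfaces with the level sets $\{z_1=c\}$ in an adapted symplectic basis is exactly that statement read through \eqref{5.14}. Your explicit $\C^*$-rescaling argument for the $\lambda_0$-independence, and the careful unwinding in part (a) (which the paper dismisses as trivial), merely make explicit what the paper leaves implicit.
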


\begin{proof}
(a) is trivial. (b) By the embedding 
$pr_1\circ P_1:B\to \mathcal{V}_0/(U_1)_0$ in lemma \ref{c5.2} (a)(ii),
the fibration of $B$ by hyperplanes 
$(P^{U_2})^{-1}(\{\lambda_0\}\times B_0)$, $\lambda_0\in F^{3}_0-\{0\}$,
is mapped to the fibration of $\mathcal{V}_0/(U_1)_0$ by the affine
hyperplanes $[\lambda_0]+(U_2)_0/(U_1)_0$.
\end{proof}

\subsection{Flat structures and (pre)potentials}\label{c6.2}

In the proof of theorem \ref{c2.2} (a), the choice $(U_\bullet,\lambda_0)$
with $U_\bullet$ an opposite filtration and $\lambda_0\in F^3_0-\{0\}$
led to a Frobenius manifold structure on $M\supset B_0$
with potential $\Phi=\Psi+...$ as in \eqref{2.19} and 
$\Psi\in \OO_{B_0}$. 
The additional choice of $v_1^0,...,v_{2n+2}^0$ lead to flat coordinates
$t_1,...,t_{2n+2}$ with $t_2,...,t_{n+1}$ flat coordinates on $B_0\subset M$.

In lemma \ref{c5.3} the choice of an opposite subbundle $U_1$ and another
subbundle $V_1$ led to a prepotential $\Psi^{U_1,V_1}\in \OO_{B}$
and a flat structure on $B$. 
The additional choice of a symplectic basis $a_1,...,a_{n+1},b_1,...,b_{n+1}$
with $U_1=\langle b_1,...,b_{n+1}\rangle$ 
and $V_1=\langle a_1,...,a_{n+1}\rangle$
led to flat special coordinates $z_1,...,z_{n+1}$ on $B$.
These data will be compared now.

\begin{theorem}
Choose $(U_\bullet,\lambda_0)$ as above. Choose $v_1^0,...,v_{2n+2}^0$ as in 
lemma \ref{c2.1}, with $v^0_1=\lambda_0$. Choose $a_i=v_i^0$ ($i=1,...,n+1$)
and $b_i=v_{n+i}^0$ ($i=2,...,n+1$) and $b_1=-v_{2n+2}^0$.
Then $a_1,...,a_{n+1},b_1,...,b_{n+1}$ are a symplectic basis, and 
$V_1=F^2_0$.
\begin{itemize}
\item[(a)] Then
\begin{eqnarray}\label{6.3}
(P^{U_2})^{-1}(\{\lambda_0\}\times B_0)=\{z_1=1\}\subset B,
\end{eqnarray}
and $B_0$ is embedded into $B$ as this hyperplane.
The flat structure on $B_0$ from the Frobenius manifold coincides
with the flat structure which $B_0$ inherits from $B$ by this
embedding.
\item[(b)] The following equalities hold true:
\begin{eqnarray}\label{6.4}
t_i &=& z_i|_{\{z_1=1\}}\qquad\textup{ for }i=2,...,n+1,\\
\Psi &=& \Psi^{U_1,F^2_0}|_{\{z_1=1\}}\label{6.5}
\end{eqnarray}
\item[(c)] The potential $\Phi$ of the Frobenius manifold can be changed
by adding any element of $\C[t_1,...,t_{2n+2}]_{\leq 2}$
(where the index means degree $\leq 2$) without changing the 
Frobenius manifold.
\end{itemize}
All the prepotentials in the class $\Psi^{U_1,F^2_0}+\C[z_1,...,z_{n+1}]_2$
from lemma \ref{c5.3} (d) give via \eqref{6.5}
and \eqref{2.19} ($\Phi=\Psi+...$)  all the Frobenius manifold potentials
in the class $\Phi+\C[t_2,...,t_{n+1}]_{\leq 2}$.
\end{theorem}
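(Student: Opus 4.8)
The plan is to build a dictionary between the $\nnn$-flat basis $v_\alpha^0$ of lemma \ref{c2.1} and the symplectic basis $a_i,b_i$, and then to read off everything from the expansion \eqref{2.3} of $v_1$. First I would check the preliminary assertions. Under $a_i=v_i^0$, $b_i=v_{n+i}^0$ ($i\geq 2$), $b_1=-v_{2n+2}^0$, the normalization \eqref{2.2}, \eqref{2.23} becomes $S(a_i,b_j)=\delta_{ij}$, $S(a_i,a_j)=S(b_i,b_j)=0$ (using $\nnn$-flatness of $S$ and $v_\alpha^0(0)=v_\alpha(0)$; note $S(a_1,b_1)=-S(v_1^0,v_{2n+2}^0)=1$), so these form a symplectic basis with $V_1=\langle a_1,\dots,a_{n+1}\rangle=F_0^2$ and $U_1=\langle b_1,\dots,b_{n+1}\rangle$. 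Since $U_2=\bigoplus_{p\leq 2}F^p\cap U_p$ is $\nnn$-flat with fibre $\langle v_2(0),\dots,v_{2n+2}(0)\rangle$ at $0$, its flat span is $\langle v_2^0,\dots,v_{2n+2}^0\rangle$, so $\mathcal{V}/U_2$ is generated by $[v_1^0]=[\lambda_0]$. Expanding $\sigma_{taut}=\delta$ via \eqref{5.7}, only the $a_1=v_1^0$ term survives modulo $U_2$, giving $[\delta]=z_1[\lambda_0]$; under the identification $\mathcal{V}/U_2\cong F_0^3$ of lemma \ref{c6.1} this means $P^{U_2}(\delta,b)=(z_1\lambda_0,b)$. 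Hence $(P^{U_2})^{-1}(\{\lambda_0\}\times B_0)=\{z_1=1\}$, which is a section of $p$ since $z_1$ is nowhere zero on $B$ and $\varepsilon(z_1)=z_1$ is homogeneous of degree one on each fibre.

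For parts (a) and \eqref{6.4} I would restrict $\sigma_{taut}$ to the section $s:B_0\to B$, $b\mapsto(v_1(b),b)$; it lands in $\{z_1=1\}$ by the previous computation, hence is the claimed embedding, and along it $\sigma_{taut}=v_1$. Substituting the dictionary into \eqref{2.3} yields
\[
v_1 = a_1 + \sum_{i=2}^{n+1} t_i a_i + \sum_{i=2}^{n+1}(\partial_i\Psi)\,b_i - \Big(\sum_k t_k\partial_k - 2\Big)\Psi\cdot b_1 .
\]
Comparing with \eqref{5.7} gives $z_1=1$, $z_i|_{\{z_1=1\}}=t_i$ (this is \eqref{6.4}), $w_i|_{\{z_1=1\}}=\partial_i\Psi$ for $i\geq 2$, and $w_1|_{\{z_1=1\}}=-(\sum_k t_k\partial_k-2)\Psi$. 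Because the special coordinates $z_i$ are $\nnn^{U_1}$-flat by lemma \ref{c5.3}(a), the flat structure that the hyperplane $\{z_1=1\}\cong B_0$ inherits is exactly the one given by $t_2,\dots,t_{n+1}$, which proves (a).

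The substantive step is \eqref{6.5}, and I expect it to be the main obstacle, since it is where the three normalizations $\Psi(0)=0$, $\varepsilon(\Psi^{U_1,F_0^2})=2\Psi^{U_1,F_0^2}$ and $v_1^0=\lambda_0$ must conspire. Write $\tilde\Psi:=\Psi^{U_1,F_0^2}|_{\{z_1=1\}}$ as a function of $t_2,\dots,t_{n+1}$. The relation $\partial_{z_i}\Psi^{U_1,F_0^2}=w_i$ from lemma \ref{c5.3}(b), restricted to $\{z_1=1\}$, gives $\partial_{t_i}\tilde\Psi=w_i|_{\{z_1=1\}}=\partial_i\Psi$ for $i\geq 2$, so $\tilde\Psi-\Psi$ is constant. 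To fix the constant I use homogeneity: over the base point $0$ the tautological section is a multiple of $\lambda_0=a_1$, whence $z_2=\dots=z_{n+1}=0$ and all $w_i$ vanish there; then $\partial_{z_1}\Psi^{U_1,F_0^2}=w_1=0$ together with $\Psi^{U_1,F_0^2}(z_1,0,\dots,0)=z_1^2\tilde\Psi(0)$ (from $\varepsilon$-homogeneity) forces $\tilde\Psi(0)=0=\Psi(0)$, so $\tilde\Psi=\Psi$.

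Finally, part (c) is immediate: by \eqref{1.17} the multiplication depends only on the third derivatives of $\Phi$, while the metric and flat coordinates are fixed by \eqref{2.20} independently of $\Phi$; adding any $q\in\C[t_1,\dots,t_{2n+2}]_{\leq 2}$ annihilates no third derivative, so $(M,\circ,e,E,g)$ is unchanged. For the concluding sentence I would combine lemma \ref{c5.3}(c), which says $\Psi^{U_1,V_1}-\Psi^{U_1,F_0^2}$ runs over all of $\C[z_1,\dots,z_{n+1}]_2$ as $V_1$ runs over $\mathcal{L}(\mathcal{V},U_1)$, with the dehomogenization bijection $\C[z_1,\dots,z_{n+1}]_2\to\C[t_2,\dots,t_{n+1}]_{\leq 2}$, $q\mapsto q|_{z_1=1}$ (both spaces have dimension $\binom{n+2}{2}$, with inverse given by homogenization in $z_1$). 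Through \eqref{6.5} and \eqref{2.19}, whose additional terms do not involve the choice of $V_1$, this carries the class of prepotentials bijectively onto $\Phi+\C[t_2,\dots,t_{n+1}]_{\leq 2}$, yielding all potentials of the fixed Frobenius manifold in that class.
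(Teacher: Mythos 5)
Your proposal is correct and follows essentially the same route as the paper's proof: comparing the expansions \eqref{2.3} and \eqref{5.7} on the hyperplane $\{z_1=1\}$ to get $t_i=z_i|_{\{z_1=1\}}$ and $\partial_i\Psi=w_i|_{\{z_1=1\}}$, then fixing the additive constant in \eqref{6.5} via the homogeneity $\varepsilon(\Psi^{U_1,F^2_0})=2\Psi^{U_1,F^2_0}$. The only differences are cosmetic: you derive the vanishing of $w_1$ over the base point directly from $\sigma_{taut}=z_1a_1$ on the fibre $p^{-1}(0)$ (where the paper evaluates $w_1|_{\{z_1=1\}}=-(\sum_k t_k\partial_k-2)\Psi$ at $t=0$), and you spell out the preliminary identification of $(P^{U_2})^{-1}(\{\lambda_0\}\times B_0)$ and the dehomogenization bijection for the concluding sentence, which the paper leaves implicit.
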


\begin{proof}
Compare \eqref{2.3} and \eqref{5.7},
\begin{eqnarray}\label{6.6}
v_1&=& v_1^0+\sum_{2=1}^{n+1}t_i\cdot v_i^0 
+ \sum_{2=1}^{n+1}\paa_i \Psi\cdot v_{n+i}^0
+((\sum_{k=2}^{n+1}t_k\paa_k-2)\Psi)\cdot v_{2n+2}^0,\\
\sigma_{taut}&=& z_1\cdot v_1^0 + \sum_{i=2}^{n+1}z_i\cdot v_i^0
+\sum_{i=2}^{n+1}w_i\cdot v_{n+i}^0 - w_1\cdot v_{2n+2}^0.
\label{6.7}
\end{eqnarray}
On the hyperplane $\{z_1=1\}$ the section $\sigma_{taut}$
restricts to $v_1$, with 
\begin{eqnarray*}
t_i&=&z_{i|\{z_1=1\}},\\ 
\paa_i\Psi&=&w_{i|\{z_1=1\}}\quad \textup{ for }\quad i=2,...,n+1,\\ 
(\sum_{k=2}^{n+1}t_k\paa_k-2)\Psi&=&-w_{1|\{z_1=1\}}.
\end{eqnarray*}
The equations $t_i=z_{i|\{z_1=1\}}$ show part (a) and \eqref{6.4}.
The equations $\paa_i\Psi=w_{i|\{z_1=1\}}\textup{ for }i=2,...,n+1$ give
\begin{eqnarray}\nonumber
\frac{\paa}{\paa t_i}((\Psi^{U_1,F^2_0})_{|\{z_1=1\}}) = 
(\frac{\paa}{\paa z_i}\Psi^{U_1,F^2_0})_{|\{z_1=1\}} = w_{i|\{z_1=1\}}
=\frac{\paa}{\paa t_i}\Psi.
\end{eqnarray}
This shows $(\Psi^{U_1,F^2_0})_{|\{z_1=1\}}=\Psi+\textup{constant}$.
In order to see that this constant is 0, we use
$(\sum_{k=2}^{n+1}t_k\paa_k-2)\Psi=-w_{1|\{z_1=1\}}$ and $v_1(0)=v_1^0$,
which gives the first equality in the following equations,
\begin{eqnarray}\nonumber
0&=& -((\sum_{k=2}^{n+1}t_k\paa_k-2)\Psi)(0)
=w_1(z_1=1,z_i=0) \quad (i=2,...,n+1)\\
&=& \left(\frac{\paa\Psi^{U_1,F^2_0}}{\paa z_1}
\right)(z_1=1,z_i=0)\nonumber\\
&=& \left(\varepsilon \Psi^{U_1,F^2_0}\right)(z_1=1,z_i=0)\nonumber\\
&=&2\cdot \Psi^{U_1,F^2_0}(z_1=1,z_i=0).\nonumber
\end{eqnarray}
As $\Psi(0)=0$, this shows \eqref{6.5}.
Part (c) is clear.
\end{proof}

\begin{remark}
The theorem says that the Frobenius manifold structures on $M$
with choices $(U_\bullet,\lambda_0)$ with fixed $U_1$, but varying
$(U_0,U_2,\lambda_0)$ have a nice common geometric origin.
The flat structures on $B_0$ come from different embeddings of $B_0$
as affine hyperplanes in the flat manifold $B$.
The parts $\Psi$ of the Frobenius manifold potentials 
$\Phi=\Psi+...$ arise via restriction of the same prepotential 
$\Psi^{U_1,F^2_0}$.
\end{remark}

\section{Hitchin systems}
\label{hit}
The remainder of this paper is devoted to the application of the theory developed thus far to 
certain integrable systems as constructed in \cite{HITC:1987}.  
These are examples of so-called \textit{algebraically completely integrable systems}, 
which in turn are known to give variations of Hodge structures of weight one on their base space.
We will show that this can be refined in a natural way to a variation of Hodge like filtrations of weight three as described in
the first part of the paper, which allows us to apply the results formulated there.
We begin with a brief review of these integrable systems.

\subsection{The moduli space of Higgs bundles}
\label{mhb}
Let $C$ be a complex curve of 
genus $g(C)\geq 2$, and fix a complex reductive group $G$ with Lie algebra $\mathfrak{g}$.
A \textit{principal Higgs bundle} is a pair $(P,\Phi)$, where $P\to C$ is
a holomorphic principal $G$-bundle over $C$, and 
$\Phi$ --called the Higgs field-- is an element of $H^0(C,{\rm ad}(P)\otimes K_C)$, that is, 
a holomorphic one-form with values in the adjoint bundle  
${\rm ad}(P)$ of $P$.

Recall that a principal $G$-bundle $P$ is said to be  \textit{stable} if the adjoint
bundle is a stable vector bundle, i.e., for every
proper subbundle $F\subset {\rm ad}(P)$, 
we have $\deg(F)/\rk(F)<\deg ({\rm ad}(P))/\rk ({\rm ad}(P))$. 
As proved in \cite{RAMA:1975}, the moduli space $\mathcal{M}$ of 
stable principal $G$-bundles is a smooth quasi-projective complex variety
of dimension $\dim\mathcal{M}=\dim G(g(C)-1)+\dim Z(G)$, where 
$Z(G)$ is the center of $G$. Its tangent space is given by 
\[
T_{[P]}\mathcal{M}\cong H^1(C,{\rm ad}(P)),
\]
so by Serre-duality, a Higgs bundle whose underlying principal bundle
is stable determines a unique point in $T^*\mathcal{M}$. 

The complex manifold $\mathfrak{X}:=T^*\mathcal{M}$ forms an open
dense subspace of the full moduli space of Higgs bundles. As a cotangent bundle,
it carries a canonical holomorphic symplectic form $\omega_{can}$: the tangent space to
$\mathfrak{X}$ at the point $[P,\Phi]$  fits
into an exact sequence
\[
0\rightarrow H^0(C,{\rm ad} (P)\otimes K_C)\rightarrow T_{[P,\Phi]}\mathfrak{X}\rightarrow H^1(C,{\rm ad} (P))\rightarrow 0.
\]
The symplectic form is just the antisymmetrized version of the pairing between the first and third entry
as induced by Serre-duality. Alternatively, 
there is a gauge-theoretical construction of this moduli space \cite{HITC:1987} which also
explains the \textit{hyperk\"ahler} nature of $\mathfrak{X}$. We shall not be concerned 
in this paper with this enriched structure except for the existence of a K\"ahler form $\omega_K$
on $\mathfrak{X}$ which is of type $(1,1)$ with respect to the canonical complex structure as a 
cotangent bundle to a complex manifold.

We will now describe Hitchin's fibration 
\[
p:\mathfrak{X}\rightarrow {\tilde B}:=\bigoplus_{i=1}^kH^0(C,K_C^{\otimes d_i}),
\]
where $k={\rm rank}(\mathfrak{g})$.
Choose a basis of invariant polynomials $p_1,\ldots p_k\in\C[\mathfrak{g}]^{G}$,
where $p_i$ has degree $d_i$. Each of these $p_i$ defines a map 
\[
p_i:H^0(C,{\rm ad}(P) \otimes K_C)\rightarrow H^0(C,K_C^{\otimes d_i}).
\]
Now $p$ is simply induced by the map $p(P,\Phi):=\sum_{i=1}^kp_i(\Phi).$
The fundamental theorem of Hitchin \cite{HITC:1987} states that 
the map $p$ defines an algebraic integrable system on $\mathfrak{X}$. 
This means that
\begin{itemize}
\item[$i)$] $p$ is the restriction of a proper holomorphic map to an open dense subspace whose generic fibers are Lagrangian with respect to the holomorphic symplectic form $\omega_{can}$,
\item[$ii)$] the K\"ahler form $\omega_K$ restricts to each fiber to define a positive polarization. 
\end{itemize}
\subsection{Cameral curves and abelianization}
\label{abelianization}
Let $\Delta\subset {\tilde B}$ be the discriminant of the map $p$ above and define $B:={\tilde B}\backslash \Delta$. By Hitchin's result stated above,
the fiber $\mathfrak{X}_b:=p^{-1}(b)\subset\mathfrak{X}$ is a dense open subset of a compact polarized abelian variety of dimension $\dim G(g(C)-1)+\dim 
Z(G)$ for each $b\in B$. It can be identified as a generalized Prym variety of a branched
cover $C_b$ of $C$, called the \textit{cameral cover}.

Fix a maximal torus $T\subseteq G$ with Lie algebra $\mathfrak{t}\subseteq\mathfrak{g}$, 
a Borel subgroup $H$ of $G$ which contains $T$, and denote the associated Weyl group by $W$. 
By Chevalley's theorem, restriction of polynomials induces an isomorphism 
$\C[\mathfrak{g}]^G\cong\C[\mathfrak{t}]^W$. 
Consider now the quotient map $\mathfrak{t}\to\mathfrak{t}/W$. 
Twisted with the canonical bundle $K_C$ this defines a Galois covering
$\mathfrak{t}\otimes K_C\to (\mathfrak{t}\otimes 
K_C)/W$, and observe that $(\mathfrak{t}\otimes 
K_C)/W\cong\bigoplus_{i=1}^kK_C^{\otimes d_i}$.
With this, the cameral cover for $b\in B$ is defined as
\[
C_b:=b^*(\mathfrak{t}\otimes K_C)\subset \mathfrak{t}\otimes K_C.
\]
The projection of the bundle $\mathfrak{t}\otimes K_C$ to the base $C$ induces 
a projection $\pi_b:C_b\to C$. By construction, this defines a $W$-Galois covering
of $C$, where the Weyl group acts by the restriction of the action on $\mathfrak{t}$.
\begin{remark}
For the classical groups, it is sometimes more convenient to use the smaller 
\textit{spectral covers} which are associated to representations of $G$, or rather their highest weights. 
Let us explain this for the case $G=GL(n,\C)$ and the fundamental representation. 
In this case the underlying moduli space $\mathcal{M}$ is of course simply
the moduli space of stable vector bundles of rank $n$. Let $\lambda\in\Lambda$ 
be the weight of the fundamental representation of $GL(n,\C)$ on $\C^n$, and 
denote its stabilizer under the action of the Weyl group by $W_\lambda$.
The spectral cover is defined as the quotient $C_b\slash W_\lambda$. 
Typically however, spectral covers suffer from singularities and it is easier to use the cameral.
\end{remark}

The abelianization procedure is the following:
for any principal $G$-bundle $P$ over $C$, the structure group of the pull-back 
$\pi_b^*P$ has a canonical reduction to $H$. The $T$-bundle associated to the 
projection $H\to T$ may not be $W$-invariant, but choosing a theta-divisor on 
$C$ gives a canonical twist to a $W$-invariant $T$-bundle \cite{SCOG:1998}.
With this one proves:
\begin{theorem}[Abelianization, see \cite{DONA:1993,FALT:1993,HITC:1987,SCOG:1998}]
\label{ab-cam}
\hspace{2cm}
\begin{itemize}
\item[$i)$] Locally around a point $(P,\Phi)\in\mathfrak{X}_b$, the moduli space of Higgs bundles
$\mathfrak{X}$ is isomorphic to
the moduli space of pairs $(\tilde{C},\tilde{P})$, where $\tilde{C}$ is a $W$-invariant deformation 
of the cameral cover $C_b$, and $\tilde{P}$ is a $W$-invariant $T$-bundle over it. 
\item[$ii)$] With this isomorphism, the projection $(\tilde{C},\tilde{P})\to \tilde{C}$ defines a 
Lagrangian foliation of an open subset of $\mathfrak{X}$.
\end{itemize}
\end{theorem}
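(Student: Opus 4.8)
The plan is to exhibit the cameral correspondence $(P,\Phi)\leftrightarrow(\tilde C,\tilde P)$ explicitly in both directions and then deduce part $ii)$ by identifying the projection to $\tilde C$ with the Hitchin map. For the forward map I would start from a Higgs bundle $(P,\Phi)$ lying over $b=p(P,\Phi)\in B$. Since $b\notin\Delta$, the field $\Phi\in H^0(C,{\rm ad}(P)\otimes K_C)$ is generically regular semisimple, so over the regular-semisimple locus the scheme of Borel subalgebras $\mathfrak b\subset{\rm ad}(P)$ containing the value of $\Phi$ is a $W$-torsor; its closure is canonically identified with the cameral cover $C_b=b^*(\mathfrak t\otimes K_C)$ by the construction in Section~\ref{abelianization}. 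The tautological point of $C_b$ over $c\in C$ then supplies a reduction of the structure group of $\pi_b^*P$ to the Borel $H$, and passing to the Levi quotient $H\to T$ produces a $T$-bundle $\tilde P_0$ on $C_b$. The subtle point — the step I expect to be the main obstacle — is that $\tilde P_0$ need not be compatible with the $W$-action on $C_b$: the natural lift of $w\in W$ sends $\tilde P_0$ to a $T$-bundle differing from it by a character-valued cocycle supported along the ramification. Following Scognamillo \cite{SCOG:1998}, I would fix a theta characteristic (equivalently a theta-divisor) on $C$ and use it to cancel this cocycle, producing a genuinely $W$-equivariant $T$-bundle $\tilde P$; verifying that this correction is canonical and exactly kills the equivariance anomaly is the technical heart of the argument, the remainder being linear algebra over the generic point together with a normality argument extending the data across the ramification divisor.

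In the reverse direction, from a pair $(\tilde C,\tilde P)$ with $\tilde C$ a $W$-invariant deformation of $C_b$ inside $\mathfrak t\otimes K_C$ and $\tilde P$ a $W$-equivariant $T$-bundle, I would form the associated bundle $\tilde P\times_T G$ on $\tilde C$ and descend it along $\pi_b$ using the $W$-structure to obtain $P$ on $C$; the Higgs field $\Phi$ is recovered from the tautological $(\mathfrak t\otimes K_C)$-valued section carried by $C_b\subset\mathfrak t\otimes K_C$, which descends to a section of ${\rm ad}(P)\otimes K_C$. Checking that the two constructions are mutually inverse is a matter of unwinding the definitions. Stability of $P$ and genuine $W$-invariance are open conditions that hold on a neighbourhood of the given point, which is precisely why the assertion is local; this proves part $i)$. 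As a consistency check, the dimension $\dim\mathfrak X_b=\dim G\,(g(C)-1)+\dim Z(G)$ recalled in Section~\ref{mhb} matches the dimension of the generalized Prym variety of $C_b$ carrying the $W$-equivariant $T$-bundles.

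For part $ii)$ I would note that, under the isomorphism of part $i)$, a $W$-invariant deformation of the curve $\tilde C$ within $\mathfrak t\otimes K_C$ is the same datum as a nearby point $b'\in B$, so on the open subset $p^{-1}(B)\subset\mathfrak X$ the projection $(\tilde C,\tilde P)\mapsto\tilde C$ coincides with the Hitchin map $p$ up to this identification of the base. Its fibres are therefore the Hitchin fibres $\mathfrak X_b$, which are Lagrangian for $\omega_{can}$ by property $i)$ of the integrable-system structure recalled in Section~\ref{mhb}. Hence the fibration by $W$-equivariant $T$-bundles over a fixed cameral curve is a Lagrangian foliation of an open subset of $\mathfrak X$, which is exactly the statement of part $ii)$.
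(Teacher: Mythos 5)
First, a point of calibration: the paper does not prove Theorem \ref{ab-cam} at all --- it is quoted from the literature \cite{DONA:1993,FALT:1993,HITC:1987,SCOG:1998}, preceded only by a two-sentence sketch of the construction: the pull-back $\pi_b^*P$ acquires a reduction to the Borel $H$, and the associated $T$-bundle, which need not be $W$-invariant, is corrected by a canonical twist coming from a theta-divisor on $C$. Your forward construction reproduces exactly this sketch, and you correctly identify the $W$-equivariance anomaly and Scognamillo's twist as the technical heart. Your argument for part $ii)$ is also sound: $W$-invariant deformations of $C_b$ inside $\mathfrak{t}\otimes K_C$ are the same datum as deformations of $b$ in $B$ (the curve determines $b$, since its image in $(\mathfrak{t}\otimes K_C)/W\cong\bigoplus_{i=1}^k K_C^{\otimes d_i}$ is the graph of $b$), so under the isomorphism of part $i)$ the projection $(\tilde{C},\tilde{P})\mapsto\tilde{C}$ is the Hitchin map, whose fibres are Lagrangian by Hitchin's theorem recalled in section \ref{mhb}; this is the identification the paper itself uses right after the theorem to derive the exact sequence \eqref{estms}.

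The genuine gap is in your reverse direction. You propose to ``form the associated bundle $\tilde{P}\times_T G$ on $\tilde{C}$ and descend it along $\pi_b$ using the $W$-structure,'' and you declare the verification that the two constructions are mutually inverse to be ``a matter of unwinding the definitions.'' Neither step works as stated. First, $W=N(T)/T$ does not act on $G$, so a $W$-equivariant structure on the $T$-bundle $\tilde{P}$ does not induce one on $\tilde{P}\times_T G$: to identify the $w$-twist of $\tilde{P}\times_T G$ with $\tilde{P}\times_T G$ one must choose lifts of the Weyl elements to $N(T)$, and the failure of these choices to be globally consistent is precisely the twisting phenomenon that the formulations of Donagi and Scognamillo are designed to handle. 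Second, even granting an equivariant structure, $\pi_b:C_b\to C$ is ramified, and descent through a quotient by a finite group acting with fixed points requires the stabilizer of each ramification point (the reflection $\sigma_\alpha$) to act trivially on the fibre over it; this forces specific local conditions on $\tilde{P}$ along the ramification divisor --- exactly the conditions that the theta-divisor twist in your forward direction is engineered to produce --- and without imposing them the ``moduli space of pairs $(\tilde{C},\tilde{P})$'' appearing in the theorem is not even the correct space. So the inverse construction, which is the actual content of abelianization, is where your proposal would fail; it cannot be reduced to unwinding definitions and should instead be quoted from, or modelled on, \cite{SCOG:1998} and \cite{DONA:1993}.
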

Weyl group invariant infinitesimal deformations of $C_b$ in $\mathfrak{t}\otimes K_C$ 
are given by elements in $H^0(C_b, N_{C_b})^W$, where $N_{C_b}\to C_b$ is the 
normal bundle to $C_b\hookrightarrow \mathfrak{t}\otimes K_C$. The symplectic form
on $K_C$ defines an isomorphism $N_{C_b}\cong \mathfrak{t}\otimes K_{C_b}$ so that $ii)$ above 
gives the exact sequence
\begin{equation}
\label{estms}
0\to H^1(C_b,\mathfrak{t}\otimes \mathcal{O}_{C_b})^W\to T_{(P,\Phi)}\mathfrak{X}\to H^0(C_b,\mathfrak{t}\otimes K_{C_b})^W\to 0.
\end{equation}
In view of the Hitchin map this gives an identification $T_bB \cong H^0(C_b,\mathfrak{t}\otimes K_{C_b})^W$.

\section{The Seiberg--Witten differential}
\label{SWdif}
In this section we will define the Seiberg--Witten differential on the cameral curves associated 
to the Hitchin system and study its properties. In particular, we will relate the differential 
to the $\C^*$-action on the moduli space of Higgs bundles.
\subsection{The $\C^*$-action}
Let $(P,\Phi)$ be a Higgs bundle over the curve $C$ with $P$ a stable $G$-bundle. For $\xi\in\C^*$, we 
can scale the Higgs field to $\xi\Phi$ to obtain another Higgs bundle and this induces a
holomorphic action $\varphi_\xi(P,\Phi):=(P,\xi\Phi)$ on the moduli space $\mathfrak{X}$. 
Of course, this is simply the canonical action of $\C^*$ on the cotangent bundle 
$T^*\mathcal{M}$, from which one immediately deduces that
\[
\varphi_\xi^*\omega_{can}=\xi\omega_{can},
\]
i.e., the canonical symplectic form is \textit{conformal} with respect to the $\C^*$-action.
Let $E$ be the generating (holomorphic) vector field of this action, and define the Liouville form as
$\alpha:=\iota_{E}\omega_{can}$. By the conformal property of the symplectic form 
above we have $\Lie_E\omega_{can}=\omega_{can}$ and therefore $d\alpha=\omega_{can}$.

Let $b\in B$ and consider the restriction $\alpha_b:=\alpha|_{p^{-1}(b)}$, 
a holomorphic one-form on the fiber $p^{-1}(b)$. Recall that Hitchin's result stated in section \ref{mhb}
identified this fiber as a dense open subset of an Abelian variety. 
\begin{lemma}
The holomorphic one-form $\alpha_b$ is translation invariant.
\end{lemma}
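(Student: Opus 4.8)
The plan is to deduce translation invariance from the behaviour of $\alpha$ under the commuting Hamiltonian flows that generate the torus $\mathfrak{X}_b=p^{-1}(b)$. Since $\mathfrak{X}_b$ is a dense open subset of an abelian variety, a holomorphic one-form on it is translation invariant exactly when it has constant coefficients in the flat affine coordinates, equivalently when it is annihilated by $\Lie_X$ for every translation-invariant vector field $X$. By algebraic complete integrability (Theorem \ref{ab-cam} together with the symplectic structure of section \ref{mhb}), such vector fields spanning $T\mathfrak{X}_b$ are precisely the Hamiltonian vector fields of the components of the Hitchin map, so it will suffice to check that $\alpha_b$ is invariant under each of these.

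First I would fix coordinates: choose a basis of $\tilde B=\bigoplus_{i=1}^k H^0(C,K_C^{\otimes d_i})$ and let $H_1,\dots,H_N$ be the corresponding coordinate functions on $\mathfrak{X}$, so each $H_\mu=\ell_\mu\circ p$ is a pull-back from the base, hence constant on the fibers, and homogeneous of some weight $d_{(\mu)}\in\{d_1,\dots,d_k\}$ under the $\C^*$-action (because $\varphi_\xi$ scales $p_i(\Phi)$ by $\xi^{d_i}$); in particular $\Lie_E H_\mu=d_{(\mu)}H_\mu$ is again constant on the fibers. The Hamiltonian vector field $X_{H_\mu}$, defined by $\iota_{X_{H_\mu}}\omega_{can}=dH_\mu$, is tangent to $\mathfrak{X}_b$: since the fibers are Lagrangian and $dH_\mu$ annihilates $T\mathfrak{X}_b$, the field $X_{H_\mu}$ lies in the symplectic orthogonal of $T\mathfrak{X}_b$, which is $T\mathfrak{X}_b$ itself. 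These fields pairwise Poisson-commute and, by algebraic integrability, are the translation-invariant fields spanning $T\mathfrak{X}_b$.

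The crux is the identity $\alpha(X_{H_\mu})=-\Lie_E H_\mu$, which follows immediately from $\alpha=\iota_E\omega_{can}$:
\[
\alpha(X_{H_\mu})=\omega_{can}(E,X_{H_\mu})=-\big(\iota_{X_{H_\mu}}\omega_{can}\big)(E)=-dH_\mu(E)=-\Lie_E H_\mu .
\]
Feeding this into Cartan's formula with $d\alpha=\omega_{can}$ gives
\[
\Lie_{X_{H_\mu}}\alpha=\iota_{X_{H_\mu}}d\alpha+d\,\iota_{X_{H_\mu}}\alpha=dH_\mu-d\big(\Lie_E H_\mu\big)=\big(1-d_{(\mu)}\big)\,dH_\mu .
\]
Because $X_{H_\mu}$ is tangent to the fiber, $\Lie$ commutes with restriction, so $\Lie_{X_{H_\mu}}\alpha_b=(1-d_{(\mu)})\,dH_\mu|_{\mathfrak{X}_b}=0$, as $H_\mu$ is constant on $\mathfrak{X}_b$. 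Since the $X_{H_\mu}$ span $T\mathfrak{X}_b$ by translation-invariant fields, $\alpha_b$ has constant coefficients in flat coordinates and is therefore translation invariant.

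The only non-formal input, and the step I expect to carry the weight of the argument, is the appeal to algebraic complete integrability: that the Hamiltonian flows of the Hitchin Hamiltonians linearize on the abelian variety $\mathfrak{X}_b$ and span its tangent space. Everything else is forced by the relation $\alpha=\iota_E\omega_{can}$ together with the homogeneity of the Hitchin map. One could instead observe that $\alpha_b$ is closed, since $d\alpha_b=\omega_{can}|_{\mathfrak{X}_b}=0$, and quote that closed holomorphic one-forms on a torus are translation invariant; but that route would require controlling the extension of $\alpha_b$ across the complement of $\mathfrak{X}_b$ in its compactification, which the flow argument sidesteps.
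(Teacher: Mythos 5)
Your proof is correct and follows essentially the same route as the paper: both compute $\Lie_{X}\alpha$ for the Hamiltonian vector fields of the Hitchin Hamiltonians via Cartan's formula, using $d\alpha=\omega_{can}$, the Lagrangian property of the fibers, and the homogeneity of the Hitchin map under the $\C^*$-action, and then invoke that these fields generate the translations on the abelian variety. The only cosmetic differences are that you compute $\Lie_{X_{H_\mu}}\alpha=(1-d_{(\mu)})\,dH_\mu$ globally before restricting (the paper restricts via $i_b^*$ throughout) and that you are slightly more careful in choosing linear coordinates $H_\mu$ on $\tilde B$ and in verifying tangency of the Hamiltonian fields.
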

\begin{proof}
As above, let $(p_1,\ldots,p_k)$ denote the components of the Hitchin map $p:\mathfrak{X}\to B$. 
Standard symplectic geometry shows that the Hamiltonian vector fields $X_i$ of $p_i$ for $i=1,\ldots,k$
are tangential to the fibers of $p$ and precisely generate the affine symmetry the fiber $p^{-1}(b)$ exhibits as an 
Abelian variety. Let $i_b:p^{-1}(b)\hookrightarrow\mathfrak{X}$ be the canonical inclusion.
Then we have
\[
\begin{split}
\Lie_{X_i}\alpha_b&=(d\iota_{X_i}+\iota_{X_i}d)i^*_b\alpha\\
&=i_b^*d\iota_{X_i}\iota_E\omega_{can}+\iota_{X_i}i^*_b d\alpha\\
&=-i_b^*d\iota_Edp_i+\iota_{X_i}i^*_b\omega_{can}\\
&=-d_ii_b^*(dp_i)\\
&=0.
\end{split}
\]
Here we have used that the fibration $p:\mathfrak{X}\to B$ is Lagrangian, i.e.,
$i_b^*\omega_{can}=0$ and that the $p_i$ are homogeneous of degree $d_i$.
\end{proof}
Introduce the following $\C^*$-action on the base $B$ of the Hitchin system: 
\[
\xi \cdot(b_1,\ldots,b_k)=(\xi^{d_1}b_1,\ldots,\xi^{d_k}b_k),
\]
where $\xi\in\C^*$ and $b=(b_1,\ldots,b_k)\in B$ with 
$b_i\in H^0(C,K_C^{\otimes d_i})$. Obviously, equipped with this action, 
the Hitchin map $p:\mathfrak{X}\to B$ is $\C^*$-equivariant. In the following, we denote 
the generating vector field of this action on $B$ by $\mathcal{E}$.

\subsection{Definition and properties}
A translation invariant one-form on an Abelian variety determines a unique
element in the linear dual of the tangent space at a generic point. Consulting
the short exact sequence \eqref{estms}, this means an element in $H^0
(C_b,\mathfrak{t}\otimes K_{C_b})^W$ for the case at hand, viz. the fiber 
$\mathfrak{X}_b:=p^{-1}(b)$ of the Hitchin map:
\begin{definition}
The Seiberg--Witten differential on the cameral curve $\lambda_{SW}\in H^0
(C_b,\mathfrak{t}\otimes K_{C_b})^W$ is the holomorphic 
one-form determined by the translation invariant one-form $\alpha_b$, the 
restriction of the Liouville form to the fiber $\mathfrak{X}_b$.
\end{definition}
There is an alternative definition of this differential as follows: Recall
that the cameral curve $C_b$ is canonically embedded in the total space 
of the vector bundle $\mathfrak{t}\otimes K_C$. There is a holomorphic action
of $\C^*$ by scaling along the fibers of this bundle. As a holomorphic cotangent
bundle, $K_C$ carries a canonical holomorphic symplectic form. On the tensor 
product $\mathfrak{t}\otimes K_C$, this can be interpreted as an $\mathfrak{t}$-valued 
symplectic form, denoted $\omega_{K_C}$. Let $\partial_\xi$ be the generator of the $\C^*$-action. 
Once again, the contraction $\theta:=\iota_{\partial_\xi}\omega_{K_C}$, called the Liouville form, is a 
potential for this symplectic form. 
\begin{proposition}
The Seiberg--Witten form is equal to the restriction of the Liouville form:
\[
\lambda_{SW}=\theta|_{C_{b}} 
\]
\end{proposition}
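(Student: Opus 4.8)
The plan is to compute the translation invariant one-form $\alpha_b$ directly on tangent vectors to the Hitchin fibre and to read off the corresponding element of $H^0(C_b,\mathfrak{t}\otimes K_{C_b})^W$. By the lemma above $\alpha_b$ is translation invariant, hence determined by its value at $(P,\Phi)$, i.e.\ by the functional it induces on the fibre tangent space $T_{(P,\Phi)}\mathfrak{X}_b = H^1(C_b,\mathfrak{t}\otimes\mathcal{O}_{C_b})^W$ (the kernel in \eqref{estms}, the fibres being Lagrangian). Via Serre duality on $C_b$ together with the $W$-invariant Killing form on $\mathfrak{t}$, which identifies $\mathfrak{t}^*\cong\mathfrak{t}$ equivariantly, this functional corresponds to an element of $H^0(C_b,\mathfrak{t}\otimes K_{C_b})^W$; that element is, by definition, $\lambda_{SW}$. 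The task is thus to identify it with $\theta|_{C_b}$.

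First I would exploit the Lagrangian property. For $X\in T_{(P,\Phi)}\mathfrak{X}_b$ one has $\alpha_b(X)=\omega_{can}(E,X)$. Since $d\alpha=\omega_{can}$ and $\mathfrak{X}_b$ is Lagrangian, $\omega_{can}$ vanishes on pairs of vectors tangent to $\mathfrak{X}_b$; hence replacing $E$ by $E+Y$ with $Y$ tangent to the fibre leaves $\omega_{can}(E,X)$ unchanged, so this number depends only on the class $dp_{(P,\Phi)}(E)\in T_bB$. This is exactly the canonical pairing $T_bB\times T_{(P,\Phi)}\mathfrak{X}_b\to\C$ attached to a Lagrangian fibration. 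Under the identifications $T_bB\cong H^0(C_b,\mathfrak{t}\otimes K_{C_b})^W$ and $T_{(P,\Phi)}\mathfrak{X}_b\cong H^1(C_b,\mathfrak{t}\otimes\mathcal{O}_{C_b})^W$ of \eqref{estms}, this pairing is the Serre-duality pairing used to define $\lambda_{SW}$. Consequently $\lambda_{SW}$ is just $dp_{(P,\Phi)}(E)$, transported to $H^0(C_b,\mathfrak{t}\otimes K_{C_b})^W$.

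It then remains to identify $dp(E)$ with $\theta|_{C_b}$. Because the Hitchin map is $\C^*$-equivariant, differentiating the relation $p(\varphi_\xi(P,\Phi))=\xi\cdot p(P,\Phi)$ at $\xi=1$ gives $dp_{(P,\Phi)}(E)=\mathcal{E}_b$, the generator of the base action. On the other hand, scaling the Higgs field scales its eigenvalues, so under the abelianization of Theorem \ref{ab-cam} the flow of $E$ induces the fibrewise-scaling flow of $\partial_\xi$ on $\mathfrak{t}\otimes K_C$; the infinitesimal deformation of the cameral curve $C_b$ produced by this scaling, read through the isomorphism $N_{C_b}\cong\mathfrak{t}\otimes K_{C_b}$, is precisely the tautological section, namely $\theta|_{C_b}$. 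Matching this with $\mathcal{E}_b$ under $T_bB\cong H^0(C_b,\mathfrak{t}\otimes K_{C_b})^W$ yields $\lambda_{SW}=\theta|_{C_b}$.

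The main obstacle is this last step: making rigorous that the abstract symplectic pairing between base and fibre directions coincides with Serre duality on $C_b$, and that $\mathcal{E}_b$ is represented by the tautological form $\theta|_{C_b}$. This is the content of the abelianization of the symplectic geometry underlying Theorem \ref{ab-cam}; I expect to treat it by expressing $\omega_{can}$ in terms of the cohomological data $(H^1(C_b,\mathfrak{t}\otimes\mathcal{O}_{C_b})^W,\,H^0(C_b,\mathfrak{t}\otimes K_{C_b})^W)$ and by verifying that the scaling deformation of $C_b$ recovers its tautological embedding into $\mathfrak{t}\otimes K_C$, rather than by any further global argument.
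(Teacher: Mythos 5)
Your proposal is correct and takes essentially the same route as the paper: both identify $\lambda_{SW}$ with $\mathcal{E}(b)\in T_bB\cong H^0(C_b,\mathfrak{t}\otimes K_{C_b})^W$ using the Lagrangian property of the fibration and the $\C^*$-equivariance of the Hitchin map, and then match $\mathcal{E}$ with the fibrewise scaling generator $\partial_\xi$ via $C_{\xi\cdot b}=\xi\cdot C_b$, concluding through the isomorphism $H^0(C_b,N_{C_b})^W\cong H^0(C_b,\mathfrak{t}\otimes K_{C_b})^W$ given by contraction with $\omega_{K_C}$, under which $\partial_\xi\mapsto\iota_{\partial_\xi}\omega_{K_C}|_{C_b}=\theta|_{C_b}$. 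The step you flag as the main remaining obstacle is precisely this contraction argument, which is also how the paper closes its proof.
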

\begin{proof}
This is a consequence of the abelianization of Higgs bundles as described in section \ref{abelianization}.
Recall that the Hitchin map $p:\mathfrak{X}\to B$ is $\C^*$-equivariant, and projects the generating
 vector field $E$ to $\mathcal{E}$. Let $(P,\Phi)\in\mathfrak{X}_b$. Because the Hitchin map defines 
a Lagrangian fibration, and the Seiberg--Witten differential $\lambda_{SW}$ is defined by restricting
$\iota_E\omega_{can}$ to the fiber $\mathfrak{X}_b$ over $b\in B$, it follows from the exact
sequence \eqref{estms} that it is given by
\[
\lambda_{SW}=\mathcal{E}(b)\in  T_b B=H^0(C_b,\mathfrak{t}\otimes K_{C_b})^W.
\]
Let $\xi\in \mathbb{C}^*$. It is an easy consequence of the definitions that 
\[
C_{\xi \cdot b}=\xi \cdot \{C_b\},
\]
where on the right hand side we use the canonical $\C^*$-action on $K_C$ and the embedding 
$C_b\hookrightarrow \mathfrak{t}\otimes K_{C_b}$. The generator $\partial_\xi$ of this action 
therefore defines a $W$-invariant deformation of $C_b$ in $\mathfrak{t}\otimes K_{C_b}$ which 
corresponds to $\mathcal{E}$ using the isomorphism $H^0(C_b,N_{C_b})^W\cong H^0
(C_b,\mathfrak{t}\otimes K_{C_b})^W$.  As explained below Theorem \ref{ab-cam}, this 
isomorphism is induced by contracting with the symplectic form on $K_C$. But the Liouville form is 
precisely defined as  $\iota_{\partial_\xi}\omega_{K_C}$, so the result now follows.
\end{proof}

Some of the information about the cameral cover is conveniently encoded in the zero divisor $D_{\lambda_{SW}}$ of $\lambda_{SW}$.
The previous proposition clarifies where these zeroes are: using the fact that $\omega_{K_C}$ is nondegenerate one finds for any vector field $v$ that $\iota_v \lambda_{SW}(p)=0$ for $p\in C_b$ if and only if $\theta(p)=0$ or $v(p)=c\cdot \partial_\xi(p)$ for some constant $c$. The first set of points are the intersections of $C_b$ with $C$ while the second set consists of the branch points of the covering map $\pi_b:C_b \to C$. We split $D_{\lambda_{SW}}=D_{int} + D_{br}$ into the intersection and branch points accordingly and calculate their degrees, cf.\ \cite{KSIR:2001}. The map $\pi_b$ has degree $|W|$, the order of the Weyl group. 
By definition, $\deg(D_{int})=\deg(C_b \cap s_0)$ with $s_0$ the zero section of $K_C \to C$. This is the same as the intersection degree with any other section $s\in H^0(C,K_C)$
\[
\deg(D_{int}) = \deg(C_b \cap s) = \deg(\pi_b)\deg(s) = |W|\cdot |K_C|
\]
We now turn our attention to the branch points.
Since the cameral cover is the pull-back via $b$ of the $W$-Galois cover ${\mathfrak{t}} \to {\mathfrak{t}}/W$, we are interested in the branch points of the latter. If $\sigma_\alpha$ denotes the reflection in the root $\alpha$, then 
\[
\sigma_\alpha h =h \quad  \Leftrightarrow \quad \alpha(h)=0
\]
The map ${\mathfrak{t}}\mapsto{\mathfrak{t}}/W$ has branch points exactly on the zero divisor of the map $h\to \prod_\alpha \alpha(h)$.
This gives a degree $\Delta$ hypersurface $H\subset {\mathfrak{t}} \otimes K_C$, where $\Delta$ denotes the number of roots of $\mathfrak{g}$. The branch divisor of the cameral cover is the intersection divisor of $b$ with $H$ and therefore has degree $|\Delta | \cdot |K_C|$ where $|K_C|=2g(C)-2$ is the degree of the 
canonical divisor. 
This immediately gives
\[
\deg(D_{br}) = |\Delta | \cdot |K_C| 
\]
This is consistent with the Riemann-Hurwitz formula, which in this case reads 
\[
g(C_b) =\frac{|W|\cdot |K_C|}{2}+\frac{|K_C|\cdot |\Delta|}{2}+1 
\]
so that indeed
\[
\deg(D_{\lambda_{SW}}) =2g(C_b)-2 =  |K_C|(|W|+|\Delta |) = \deg(D_{int})+\deg(D_{br}).
\]
The multiplicities of the points in $D_{\lambda_{SW}}$ may depend on the point in the base $B$, but in the generic situation $C_b \cap s_0$ consists of transversal intersections (giving first order zeroes of $\lambda_{SW}$) and the branch points are all of second order (giving second order zeroes). From now on, we will assume to be in the generic situation.

\section{Variations of Hodge structures from Cameral curves}
\label{camVHS}
In this section we study a variation of Hodge structures associated to the family of cameral curves
of the Hitchin system. A priori, this is a variation of weight one; in physics terminology the base 
is a \textit{rigid} special K\"ahler manifold, in mathematical terms it is called \textit{affine} special K\"ahler (cf. \cite{FREE:1999, ALEX-CORT-DEVC:2002, HERT:2003}). 
However, a careful analysis of the Seiberg--Witten differential in this variation shows that there exists a canonical refinement to a variation of weight three. 
In the physics literature this is called a \textit{local} special K\"ahler manifold, in the mathematics literature on refers to this situation as \textit{projective} special K\"ahler.

\subsection{The variation of weight one}
\label{var-1}
We review the variation of Hodge structures of weight $w=1$ over $B$ using the setup as in \cite{DELI:1970}.
The family of cameral curves $f:\mathscr{C}\to B$ is defined such that $\mathscr{C}_b:=f^{-1}(b)\cong C_b$.
Recall that $\mathscr{C}$ is equipped with an action of the Weyl group which preserves
the fibers of $f$. Consider now the direct image functor of $f$ in 
the category of $W$-equivariant  sheaves
\[
f_*:\mathsf{Sh}_W(\mathscr{C})\to\mathsf{Sh}(B),
\] 
which assigns to $\mathscr{S}\in\mathsf{Sh}_W(\mathscr{C})$ the sheaf
\[
U\mapsto \mathscr{S}(f^{-1}(U))^W. 
\]
Its derived functors are denoted by
$R^\bullet f_*$.
Let $\Lambda$ be the root lattice of $G$ and denote by $\underline \Lambda$ the associated locally constant sheaf on $\mathscr{C}$ equipped with the canonical $W$-action.
Homotopy invariance of cohomology implies that the sheaf of $\Z$-modules
\[
\mathcal{V}_\Z:=R^1f_*{\underline \Lambda}\in\mathsf{Sh}(B)
\]
forms a local system on $B$ whose stalk at $b\in B$ equals 
$(\mathcal{V}_\Z)_b=H^1(C_b,{\underline \Lambda})^W$. Next we consider the tensor product
\[
\mathcal{V}:=\mathcal{V}_\Z\otimes_\Z\mathcal{O}_{B},
\]
a coherent sheaf of holomorphic sections of a vector bundle over $B$.
Because ${\underline \Lambda}\otimes_\Z\C\cong\mathfrak{t}$, its fiber at $b\in B$ is given by $\mathcal{V}_b=H^1(C_b,\mathfrak{t})^W$. Obviously, the map $f$ is proper and therefore we have isomorphisms 
\[
\mathcal{V}\cong R^1f_*(\mathfrak{t}\otimes_\C f^* \mathcal{O}_{B})\cong \mathscr{H}^1 \left( f_* \left( \mathfrak{t} \otimes \Omega^\bullet_{\mathscr{C}\slash B} \right) \right)
\]
Here the relative differentials are defined through the following short exact sequence of coherent sheaves on ${\mathscr{C}}$
\begin{equation}
\label{rdf}
0\rightarrow f^* \Omega^\bullet_{B} \rightarrow\Omega^\bullet_{\mathscr{C}}
\rightarrow\Omega^\bullet_{\mathscr{C}\slash B}\rightarrow 0.
\end{equation}
The middle term carries a natural decreasing filtration via
\[
{\bf{F}}^k = \mbox{image} \left[ f^*\Omega^k_{B} \otimes_{{\mathcal{O}}_{\mathscr{C}}} \Omega^{\bullet-k}_{{\mathscr{C}}} \to \Omega^{\bullet}_{{\mathscr{C}}}  \right]
\]
The associated spectral sequence degenerates and leads
to a filtration on $(\mathfrak{t}\otimes\Omega^\bullet_{\mathscr{C}/B},d)$, the Hodge filtration. For the case at hand, this filtration has weight one; 
$F^1\subset{F}^0=\mathcal{V}$, with ${F}^1=f_*\left( \mathfrak{t}\otimes \Omega^1_{\mathscr{C}\slash B}\right)$, i.e., ${F}^1_b=H^0(C_b,\mathfrak{t}\otimes 
K_{C_b})^W\subset H^1(C_b,\mathfrak{t})^W.$ The differential 
\[
\nabla: E^{0,1}_1 \cong \mathcal{V} \to E_1^{1,1}\cong f^* \Omega^1_{B}\otimes \mathcal{V}
\]
is a flat connection on $\mathcal{V}$, 
called the Gauss-Manin connection,  whose flat sections are given by $\mathcal{V}_\mathbb{Z}\otimes_\Z\C$.
Finally, there is a polarization $S:\mathcal{V}\times\mathcal{V}\to\mathcal{O}_{B}$ given by
\begin{equation}
S_b(\alpha,\beta)=\left<\alpha\cup\beta,[C_b]\right>,
\end{equation}
where the cup-product includes taking the inner product of two elements in $\mathfrak{t}$.
Since we work with the first derived functor, it is antisymmetric: $S(\alpha,\beta)=-S(\beta,\alpha)$.
Furthermore, it is $\nabla$-flat:
\begin{equation} 
\label{pol-flat}
dS(\alpha,\beta)=S(\nabla\alpha,\beta)+S(\alpha,\nabla\beta).
\end{equation}
The total of these data $(B,\mathcal{V},\nabla,\mathcal{V}_\Z,S,{F}^\bullet)$  
define a variation of polarized Hodge structures of weight $w=1$, cf section \ref{c1.1}.

\subsection{The derivative of the Seiberg--Witten differential}
\label{section cdr}
Consider the variation of polarized Hodge structures $(B,\mathcal{V},\nabla,\mathcal{V}_\Z,S,{F}^\bullet)$ of weight $1$ associated to the family of cameral curves $f:\mathscr{C}\to B$ constructed in the previous section. 
By definition, the universal curve $\mathscr{C}$ comes equipped with an embedding
$\mathscr{C}\hookrightarrow \mathfrak{t}\otimes K_C\times B$. Pulling back  
the $\mathfrak{t}$-valued Liouville form $\theta$ on $\mathfrak{t}\otimes K_C$, one obtains a 
holomorphic one-form $\lambda$ on $\mathscr{C}$ which restricts to the Seiberg--Witten differential $\lambda_{SW}$ 
on each fiber $C_b$. In the following we write $\lambda_b$ for this restriction.  
By definition of the relative differential forms, the one-form $\lambda_{SW}$ defines a section of $\mathfrak{t}\otimes\Omega^1_{\mathscr{C}}$ which, under the projection to 
$\mathfrak{t}\otimes\Omega^1_{\mathscr{C}\slash B}$ and the direct image $f_*$, defines a section
$\lambda_{SW}\in {F}^1\subset\mathcal{V}$
and restricts to the Seiberg--Witten differential on each fiber:
\[
\lambda_b\in {F}^1_b=H^0(C_b,\mathfrak{t}\otimes K_{C_b})^W.
\]

\subsubsection{The \v{C}ech-de Rham resolution}
To compute the derivative of the Seiberg--Witten differential under the Gauss--Manin connection,
we use a \v{C}ech-resolution of the relative de Rham complex 
$(\Omega^\bullet_{\mathscr{C}/B},d)$ and calculate the hypercohomology following \cite{DELI:1970}. 
Define
\[
U=\{x\in\mathscr{C},~d\pi_{f(x)}\neq 0\},
\]
i.e., the complement of the branch points of the cameral cover or equivalently the complement of the second order zeroes of the Seiberg--Witten differential.
We choose $V\subset\mathscr{C}$ such that $V\cap C_b$ consists of a disjoint union of 
small disks $V_1,\dots,V_{|D_{br}|}$ around the second order zeroes $p_1,\dots,p_{|D_{br}|} \in C_b$ of $\lambda_{b}$.
Here ${|D_{br}|}=|\Delta||K_C|$ denotes the number of branch points, i.e., second order zeroes of $\lambda_{SW}$.
For any $W$-equivariant sheaf $\mathscr{S}\in \mathsf{Sh}_W(\mathscr{C})$, write
$f^U_*\mathscr{S}\in\mathsf{Sh}(B)$ short for the composition $f_*(i_{U})_*\mathscr{S}|_U$, 
where $i_U:U \hookrightarrow \mathscr{C}$ is the inclusion, and similarly for $f_*^V$ and $f^{U\cap V}_*$.

To compute $R^1f_*$ we need the following
part of the double complex of coherent sheaves on $B$:
\begin{equation}
\label{crd}
\xymatrix{f_*\Omega^1_{\mathscr{C}\slash B}(\mathscr{C})\ar[r]& 
f^U_*\Omega^1_{\mathscr{C}/B}\ar@(lu,ru)[]^{(L_{X_U},L_{X_V})} \oplus f^V_*\Omega^1_{\mathscr{C}\slash B}
\ar[r]^{\hspace{0.7cm}\delta} \ar[rd]^{\iota_{X_V-X_U}}  &f_*^{U\cap V}\Omega^1_{\mathscr{C}\slash B} \\
&f^U_*\Omega^0_{\mathscr{C}/B}\oplus f^V_*\Omega^0_{\mathscr{C}\slash B}\ar[r]^{\hspace{0.7cm}\delta} \ar[u]^{d_{\mathscr{C}\slash B}}&
f^{U\cap V}_*\Omega^0_{\mathscr{C}/B}\ar[u]^{d_{\mathscr{C}\slash B}}\ar@(u,ur)[]^{L_{X_U}}} 
\end{equation}
The vertical map $d_{\mathscr{C}\slash B}$ is the relative de Rham differential and $\delta$ denotes the \v{C}ech differential.
The notation $X_U,X_V$ will be explained below.
With this resolution, elements in $R^1f_*$ will  be represented as cocycles in 
\begin{equation}
\label{cdr}
\underbrace{\left(\Omega^1_{\mathscr{C}/B}(U)\oplus\Omega^1_{\mathscr{C}/B}(V)\right)}_{({\rm 
I})}\oplus\underbrace{\Omega^0_{\mathscr{C}/B}(U\cap V)}_{({\rm II})}
\end{equation}
so a relative differential $\alpha$ is represented by a triple
\[
(\alpha_U,\alpha_V,g_\alpha)
\]
satisfying
\[
d_{{\mathscr{C}}\slash B} g_\alpha = \delta \left(\alpha_V, \alpha_U\right)
\]
In terms of this complex, the Hodge filtration is given by $(\ref{cdr})$ and the polarization $S$ is given by a trace-residue pairing
\begin{equation}
\label{trres}
S_b(\alpha,\beta) = \sum_{k=1}^{|D_{br}|} Res_{V_k}  \left\langle g_\alpha dg_\beta \right\rangle
\end{equation}
where $\left\langle \hdots \right\rangle$ indicates the use of a pairing on $\mathfrak{t}$.

We now describe the Gauss-Manin connection. 
Over $U$ and $V$, one can choose splittings of the exact sequence of sheaves
\begin{equation}
\label{tangent}
0 \to f_*\Theta_{\mathscr{C}/B} \to f_* \Theta_\mathscr{C} \to  \Theta_B \to 0.
\end{equation}
This provides lifts $X_U,X_V$ of holomorphic vector fields $X$ on $B$.
Conversely, such lifts define a splitting.
The Gauss-Manin connection now has an explicit description: 
\begin{equation}
\label{GM}
\nabla_X (\alpha_U,\alpha_V,g_\alpha) = \left(L_{X_U}\alpha_U,L_{X_V}\alpha_V, L_{X_U}g_\alpha +\iota_{X_U-X_V}\alpha_V  \right)
\end{equation}
%
%
%
%
%
%
It is well-known that the part of the Gauss--Manin 
connection that actually shifts degree in the Hodge filtration, i.e., the Higgs field
\[
C_X : {{F}}^1 \to \mathcal{V} \slash {F}^1,
\]
equals taking the cup-product with the 
Kodaira--Spencer class: $C_X(\alpha)=\alpha\cup\kappa(X)$. Here 
$\alpha\in H^0(C_b,\mathfrak{t}\otimes K_{C_b})^W$, 
$\kappa:T_b B\to H^1(C_b,\Theta_{C_b})$ is the Kodaira--Spencer map and the notation stands short for the natural pairing
\[
H^0(C_b,\mathfrak{t}\otimes K_{C_b})^W\times H^1(C_b,\Theta_{C_b})\to H^1(C_b,\mathfrak{t}\otimes \mathcal{O}_{C_b})^W.
\]
In the relative \v{C}ech--de Rham complex, if $\alpha$ is represented by $(\alpha_U,\alpha_V,0)$ then $C_X$ is given by the interior product $\iota_{X_U-X_V}\alpha_V$, which maps $({\rm I})$ to $({\rm II})$ in \eqref{cdr}. 
\subsubsection{Derivatives of $\lambda_{SW}$} We now have the machinery to start the computation:
\begin{lemma}
\label{gt-deg3}
$\nabla_X\lambda_{SW}\in F^1_b$ for all $X\in T_b B$.
\end{lemma}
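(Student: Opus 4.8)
The plan is to reduce the statement to the vanishing of the Higgs field on $\lambda_{SW}$ and then to compute that Higgs class in the relative \v{C}ech--de Rham complex \eqref{cdr}. Since the Higgs field $C_X\colon F^1\to\mathcal{V}/F^1$ is exactly the composition of $\nabla_X$ with the projection $\mathcal{V}\to\mathcal{V}/F^1\cong H^1(C_b,\mathfrak{t}\otimes\mathcal{O}_{C_b})^W$, the inclusion $\nabla_X\lambda_{SW}\in F^1_b$ is equivalent to $C_X\lambda_{SW}=0$. So first I would record this reduction (Griffiths transversality in weight one only gives the trivial $\nabla_X F^1\subset F^0=\mathcal{V}$ for free; the content is that for the \emph{special} section $\lambda_{SW}$ the shift into $\mathcal{V}/F^1$ actually vanishes).

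Next I would represent $\lambda_{SW}$ by the cocycle $(\lambda_U,\lambda_V,0)$, where $\lambda_U$ and $\lambda_V$ are the relative restrictions of the global holomorphic one-form $\lambda=\theta|_{\mathscr{C}}$; recall from the previous proposition that $\lambda_b=\theta|_{C_b}$ is the restriction of the ambient Liouville form $\theta=\iota_{\partial_\xi}\omega_{K_C}$ on $\mathfrak{t}\otimes K_C$. By the explicit description of the Higgs field recalled just before the lemma, $C_X\lambda_{SW}$ is then represented by the type-(II) term $\iota_{X_U-X_V}\lambda_V$, where $X_U,X_V$ are the lifts of $X$ coming from the chosen splittings of \eqref{tangent} and $X_U-X_V$ is the vertical field representing $\kappa(X)$.

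The crux is to exhibit $\iota_{X_U-X_V}\lambda_V$ as a \v{C}ech coboundary, using that $\lambda$ extends to the global form $\theta$. I would set $g_U:=\iota_{X_U}\lambda$ on $U$ and $g_V:=\iota_{X_V}\lambda$ on $V$, contracting the \emph{full} lifts against the full pulled-back form $\lambda$. Since $X_U-X_V$ is vertical, contracting it against $\lambda$ only sees the relative part, so on $U\cap V$ one has $g_U-g_V=\iota_{X_U-X_V}\lambda=\iota_{X_U-X_V}\lambda_V$; that is, the Higgs term equals $\delta(g_U,g_V)$, a coboundary, whence $C_X\lambda_{SW}=0$ and $\nabla_X\lambda_{SW}\in F^1_b$. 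Equivalently, extending the deformation direction to an ambient vector field $\widetilde{X}$ along $C_b$, this is the Cartan-formula identity $L_{\widetilde{X}}\theta=\iota_{\widetilde{X}}\omega_{K_C}+d\,\iota_{\widetilde{X}}\theta$, in which $\iota_{\widetilde{X}}\omega_{K_C}|_{C_b}$ is a holomorphic one-form (hence in $F^1$) while $d\,\iota_{\widetilde{X}}\theta$ is exact; the \v{C}ech bookkeeping is the globally correct way to run this argument past the branch points, where no single ambient lift need exist.

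The main obstacle I anticipate is holomorphicity and $W$-invariance across the branch locus. The patch $V$ surrounds the second-order zeros of $\lambda_b$, i.e.\ the branch points of $\pi_b\colon C_b\to C$, so I must check that $g_V=\iota_{X_V}\lambda$ is genuinely holomorphic there (it is, since $\theta$ is holomorphic on the smooth total space $\mathfrak{t}\otimes K_C$ and the lift $X_V$ is holomorphic over $V$), and that the lifts $X_U,X_V$ may be taken $W$-equivariant so that $g_U,g_V$ are $W$-invariant and the coboundary lives in the $W$-invariant subcomplex computing $H^1(C_b,\mathfrak{t}\otimes\mathcal{O}_{C_b})^W$. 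Granting these, the argument is complete.
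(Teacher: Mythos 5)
Your proposal is correct and is essentially the paper's own proof: both reduce the claim to $C_X\lambda_{SW}=0$, represent $\lambda_{SW}$ by the cocycle $(\lambda|_U,\lambda|_V,0)$ coming from the globally defined form $\lambda$, and exhibit the Higgs term $\iota_{X_U-X_V}\lambda|_V$ as the \v{C}ech coboundary $\delta\left(\iota_{X_U}\lambda|_U,\iota_{X_V}\lambda|_V\right)$. Your additional remarks on verticality of $X_U-X_V$, holomorphicity near the branch points, and $W$-equivariance simply make explicit what the paper leaves implicit.
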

\begin{proof}
We have to show that the composition
\[
C_X:{F}^1\stackrel{\nabla_X}{\longrightarrow}\mathcal{V}\longrightarrow\mathcal{V}\slash{F}^1,
\]
applied to $\lambda_{SW}$, is zero. 
Since $\lambda$ is naturally defined on $\mathscr{C}$, $\lambda_{SW}$ can be represented as a differential on $\mathscr{C}$ by $(\lambda|_U,\lambda|_V,0)$.
One finds on $U\cap V$ that
\[
\iota_{X_U-X_V}\lambda|_V = \delta \left( \iota_{X_U}\lambda|_U, \iota_{X_V}\lambda|_V \right).
\]
Since this is exact, it follows that $C_X\lambda_{SW}=0$.
\end{proof}
Recall, cf. \eqref{estms}, that $T_b B\cong H^0(C_b,\mathfrak{t}\otimes K_{C_b})^W$. For $X\in T_b B$, we write
$\alpha_X$ for the holomorphic differential associated to $X$ by this isomorphism.
\begin{proposition}
\label{cy-hs}
For all $X\in T_b B$, we have 
\[
\nabla_X\lambda_{SW}=\alpha_X.
\]
\end{proposition}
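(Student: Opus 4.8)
The plan is to compute $\nabla_X\lambda_{SW}$ directly in the \v{C}ech--de Rham resolution \eqref{cdr} and to recognize the outcome as the tautological differential $\alpha_X$. As in Lemma \ref{gt-deg3} I would represent $\lambda_{SW}$ by the cocycle $(\lambda|_U,\lambda|_V,0)$, where $\lambda$ is the globally defined pull-back of the Liouville form $\theta$. Applying the explicit formula \eqref{GM} for the Gauss--Manin connection gives
\[
\nabla_X\lambda_{SW}=\left(L_{X_U}\lambda|_U,\,L_{X_V}\lambda|_V,\,\iota_{X_U-X_V}\lambda|_V\right).
\]
By the identity already established in the proof of Lemma \ref{gt-deg3}, the $(\mathrm{II})$-component equals the \v{C}ech coboundary $\delta(\iota_{X_U}\lambda|_U,\iota_{X_V}\lambda|_V)$, so subtracting the total coboundary of the $0$-cochain $(\iota_{X_U}\lambda|_U,\iota_{X_V}\lambda|_V)$ yields a cohomologous representative whose $(\mathrm{II})$-part vanishes.

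Next I would simplify the surviving $(\mathrm{I})$-entries. The coboundary subtraction shifts each of them by $-d_{\mathscr{C}\slash B}\,\iota_{X_\bullet}\lambda|_\bullet$, and Cartan's formula together with $d\lambda=\omega_{K_C}$ (the pull-back of the $\mathfrak{t}$-valued symplectic form, since $\theta$ is its Liouville potential) rewrites $L_{X_\bullet}\lambda|_\bullet-d_{\mathscr{C}\slash B}\,\iota_{X_\bullet}\lambda|_\bullet$ as the relative part of $\iota_{X_\bullet}\omega_{K_C}$. Thus
\[
\nabla_X\lambda_{SW}=\left([\iota_{X_U}\omega_{K_C}]_{\mathrm{rel}},\,[\iota_{X_V}\omega_{K_C}]_{\mathrm{rel}},\,0\right),
\]
which is manifestly a section of $F^1$, reconfirming Lemma \ref{gt-deg3}.

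It then remains to identify this class with $\alpha_X$. Because the family $\mathscr{C}\hookrightarrow(\mathfrak{t}\otimes K_C)\times B$ deforms $C_b$ inside the \emph{fixed} total space $\mathfrak{t}\otimes K_C$, the tangent vector $X\in T_bB$ is realized by a global $W$-invariant normal field $\nu_X\in H^0(C_b,N_{C_b})^W$, namely the part of any lift $X_\bullet$ normal to $C_b$. Evaluating $[\iota_{X_\bullet}\omega_{K_C}]_{\mathrm{rel}}$ on a tangent vector $Y$ to $C_b$ produces $\omega_{K_C}(X_\bullet,Y)$; since $C_b$ is isotropic for $\omega_{K_C}$ (indeed $\iota_{C_b}^*\omega_{K_C}=d(\theta|_{C_b})=d\lambda_{SW}=0$ on the curve), the tangential component of $X_\bullet$ contributes nothing and only $\omega_{K_C}(\nu_X,Y)$ survives. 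By the very definition of the isomorphism $N_{C_b}\cong\mathfrak{t}\otimes K_{C_b}$ induced by $\omega_{K_C}$ (see the discussion following Theorem \ref{ab-cam}), this is exactly $\alpha_X=\iota_{\nu_X}\omega_{K_C}|_{C_b}$; independence of the chosen lift shows that the two \v{C}ech entries agree and glue to the single global form $\alpha_X$.

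The step I expect to be the main obstacle is the bookkeeping at the branch points, which is precisely why the two-patch cover $U,V$ in \eqref{cdr} was introduced: the relative structure degenerates along $D_{br}$, so one must verify that the lifts $X_U,X_V$ can be chosen with the prescribed normal part $\nu_X$ on each patch and that the coboundary subtraction is legitimate across $U\cap V$. Granting this matching of the \v{C}ech lifts with the geometric normal field, the computation gives $\nabla_X\lambda_{SW}=\alpha_X$.
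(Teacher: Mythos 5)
Your proof is correct and follows essentially the same route as the paper's: representing $\lambda_{SW}$ by $(\lambda|_U,\lambda|_V,0)$, subtracting the coboundary of $(\iota_{X_U}\lambda|_U,\iota_{X_V}\lambda|_V)$, using Cartan's formula together with $d\lambda=\omega_{K_C}$ to arrive at $(\iota_{X_U}\omega_{K_C},\iota_{X_V}\omega_{K_C})$ as relative forms, and identifying the result with $\alpha_X$ via the $\omega_{K_C}$-induced isomorphism $N_{C_b}\cong\mathfrak{t}\otimes K_{C_b}$. The only cosmetic difference is the order of the last two steps: the paper first glues the two local entries (noting $\iota_{X_U-X_V}\omega_{K_C}=0$ as a relative form because $X_U-X_V$ is vertical) and then invokes the definition of $\alpha_X$, whereas you identify each entry with $\alpha_X$ directly and deduce the gluing from lift-independence -- the same observation in either order.
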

\begin{proof}
We have already seen in the previous Lemma that the part of $\nabla_X\lambda$ 
which maps from $({\rm I})$ to $({\rm II})$ in the \v{C}ech--de Rham complex \eqref{cdr}, is exact. 
The remaining part, mapping $({\rm I})$ to $({\rm I})$, is given by taking the Lie derivatives 
$L_{X_U}$, $L_{X_V}$ with respect to holomorphic lifts of $X$ to $U$ and $V$.
By Cartan's formula
\[
L_{X_U}\lambda|_U=(d\iota_{X_U}+\iota_{X_U}d)\lambda|_U.
\] 
From this we see that
\[
\left(L_{X_U}\lambda_U,L_{X_V}\lambda_V\right)-d\left(\iota_{X_U}\lambda_U,\iota_{X_V}\lambda_V\right)=\left(\iota_{X_U}\omega_{K_C},
\iota_{X_V}\omega_{K_C}\right),
\]
where we have used that $\iota_{X_U}d\lambda|_U=\iota_{X_U}\omega_{K_C}|_{U}\in f_*^U \Omega^1_{\mathscr{C}\slash B}$.
Recall that the second term on the left hand side is exactly the derivative of the cocycle needed in the proof of the previous Lemma to make $C_X\lambda_{SW}$ equal to
 zero.
We now claim that on $U\cap V$ we have
\[
\iota_{X_U}\omega_{K_C}-\iota_{X_V}\omega_{K_C}=0
\]
in $f_*^{U\cap V}\Omega^1_{\mathscr{C}\slash B}$.
Indeed, the difference $X_U-X_V$ is a section of $\ker f_*\subset \Theta_{\mathscr{C}}$ and therefore tangent to each fiber $C_b$ of $f:\mathscr{C}\to B$. But $\omega_{K_C}$ is a ($\mathfrak{t}$-valued)
symplectic form, so $\iota_{X_U-X_V}\omega_{K_C}=0$ as a relative differential form. 
It follows that $\iota_{X_U}\omega_{K_C}|_{\mathscr{C}}$ and $\iota_{X_V}\omega_{K_C}|_{\mathscr{C}}$ are the
restrictions of an element of $f_*\Omega^1_{\mathscr{C}\slash B}$ which is by definition $\alpha_X$. 
\end{proof}
As a corollary one finds the following rather obvious fact:
\begin{corollary} For the generator $\mathcal{E}$ of the $\C^*$-action on $B$, we have:
\[
\nabla_\mathcal{E}\lambda_{SW}=\lambda_{SW}.
\]
\end{corollary}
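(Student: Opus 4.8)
The plan is to obtain this as an immediate specialization of Proposition \ref{cy-hs} to the Euler vector field $X=\mathcal{E}$. Recall that Proposition \ref{cy-hs} asserts $\nabla_X\lambda_{SW}=\alpha_X$, where $\alpha_X\in H^0(C_b,\mathfrak{t}\otimes K_{C_b})^W$ is the holomorphic one-form corresponding to the tangent vector $X\in T_bB$ under the isomorphism
\[
T_bB\cong H^0(C_b,\mathfrak{t}\otimes K_{C_b})^W
\]
induced by the exact sequence \eqref{estms}. Thus the whole content of the corollary reduces to identifying $\alpha_{\mathcal{E}}$: putting $X=\mathcal{E}$ gives $\nabla_{\mathcal{E}}\lambda_{SW}=\alpha_{\mathcal{E}}$, and it remains only to recognize the right-hand side.

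The key input is the identification of $\lambda_{SW}$ itself already made in the proof that $\lambda_{SW}=\theta|_{C_b}$. There the Seiberg--Witten differential, obtained by restricting $\iota_E\omega_{can}$ to the fiber $\mathfrak{X}_b$, was shown via \eqref{estms} to be precisely the image of the Euler field under this same isomorphism, namely $\lambda_{SW}=\mathcal{E}(b)$ viewed inside $H^0(C_b,\mathfrak{t}\otimes K_{C_b})^W$. In the notation of Proposition \ref{cy-hs} this reads $\alpha_{\mathcal{E}}=\lambda_{SW}$. Substituting this into $\nabla_{\mathcal{E}}\lambda_{SW}=\alpha_{\mathcal{E}}$ yields $\nabla_{\mathcal{E}}\lambda_{SW}=\lambda_{SW}$, which is the claim.

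There is essentially no obstacle, since all the substantive work was carried out in Proposition \ref{cy-hs} and in the identification $\lambda_{SW}=\mathcal{E}(b)$; the only point deserving a word is that the two appearances of the isomorphism $T_bB\cong H^0(C_b,\mathfrak{t}\otimes K_{C_b})^W$---the one defining $\alpha_X$ and the one expressing $\lambda_{SW}=\mathcal{E}(b)$---are literally the same isomorphism from \eqref{estms}, so no compatibility check is required. Conceptually, the statement also reflects the homogeneity of the construction: the Hitchin map intertwines the generator $E$ on $\mathfrak{X}$ with $\mathcal{E}$ on $B$, and since $\Lie_E\omega_{can}=\omega_{can}$ the one-form $\lambda_{SW}=\iota_E\omega_{can}$ is an eigenvector of weight one for the induced flow, which is exactly $\nabla_{\mathcal{E}}\lambda_{SW}=\lambda_{SW}$.
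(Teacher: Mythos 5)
Your proposal is correct and is exactly the derivation the paper intends: the corollary is stated as an immediate consequence of Proposition \ref{cy-hs} combined with the identification $\lambda_{SW}=\mathcal{E}(b)$ under the isomorphism $T_bB\cong H^0(C_b,\mathfrak{t}\otimes K_{C_b})^W$ from \eqref{estms}, which is precisely what you use. Your observation that both appearances of this isomorphism are the same one (so no compatibility check is needed) is the only point worth spelling out, and you handle it correctly.
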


\subsection{The variation of weight three}
Consider the variation of Hodge structures of weight one constructed in section \ref{var-1}. 
With the results of the previous section, we can now refine the filtration to a obtain a variation of Hodge like filtrations of weight $3$:
introduce
\[
\begin{split}
{\mathcal F}^3&:=\mathcal{O}_{B}\cdot \lambda_{SW}\\
{\mathcal F}^2&:=R^1f_*\Omega^1_{\mathscr{C}\slash B}\\
{\mathcal F}^1&:=\left({\mathcal F}^3\right)^{\perp S}\\
{\mathcal F}^0&:=\mathcal{V},
\end{split}
\]
and note that ${\mathcal F}^2=F^1$.
We introduce the projectivization $\mathbf{P}(B)$ with respect to the $\mathbb{C}^*$-action and obtain
\begin{theorem}
\label{var-weight3}
The data $(\mathbf{P}(B),\mathcal{V},\nabla, \mathcal{V}_\Z,S,{\mathcal F}^\bullet)$ define a variation of Hodge like filtrations of
 weight 3 satisfying the CY-condition.
\end{theorem}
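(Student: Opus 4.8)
The plan is to verify directly the four defining conditions of a variation of Hodge like filtrations of weight $3$ with CY-condition: Griffiths transversality \eqref{1.1}, the pairing condition \eqref{1.4}, the numerical conditions \eqref{1.7}, and the genericity condition \eqref{1.8}. Write $2n+2=\rank\mathcal{V}$, so the weight-one Hodge bundle $F^1=\mathcal{F}^2$ has rank $n+1$ and $\dim B=\dim F^1_b=n+1$ via the identification $T_bB\cong H^0(C_b,\mathfrak{t}\otimes K_{C_b})^W=F^1_b$ coming from \eqref{estms}; hence $\dim\mathbf{P}(B)=n$. The corollary to Proposition~\ref{cy-hs} gives $\nabla_{\mathcal{E}}\lambda_{SW}=\lambda_{SW}$, i.e.\ $\lambda_{SW}$ is homogeneous of weight one for the $\C^*$-action, so the line $\mathcal{F}^3=\mathcal{O}_B\cdot\lambda_{SW}$, and therefore all of $\mathcal{F}^\bullet$, is $\C^*$-invariant and descends to $\mathbf{P}(B)$; equivalently I would pull back $(\mathcal{V},\nabla,S,\mathcal{F}^\bullet)$ along a local section of the $\C^*$-bundle $B\to\mathbf{P}(B)$, on which the flat connection stays flat and all pointwise conditions are preserved.

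For the numerical conditions \eqref{1.7}, first note that $\mathcal{F}^3$ is a genuine line bundle: under $X\mapsto\alpha_X$ the generator $\mathcal{E}$ maps to $\alpha_{\mathcal{E}}=\lambda_{SW}$, and since $\mathcal{E}(b)\neq0$ for every $b\in B$ (the only fixed point $0$ lies in the discriminant), we get $\lambda_{SW}(b)\neq0$ throughout. As $\mathcal{F}^3\subset\mathcal{F}^2=F^1$ (indeed $\lambda_{SW}\in F^1$) and $\mathcal{F}^1=(\mathcal{F}^3)^{\perp_S}$ with $S$ nondegenerate, the ranks $(\rank\mathcal{F}^3,\rank\mathcal{F}^2,\rank\mathcal{F}^1,\rank\mathcal{F}^0)=(1,n+1,2n+1,2n+2)$ follow at once, which is exactly \eqref{1.7}. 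For \eqref{1.4} the only nonempty cases are $p=3,2,1$: the case $p=2$ is $S(\mathcal{F}^2,\mathcal{F}^2)=S(F^1,F^1)=0$, the weight-one polarization, while $p=3$ and $p=1$ read $S(\mathcal{F}^3,\mathcal{F}^1)=0$, which holds by the very definition $\mathcal{F}^1=(\mathcal{F}^3)^{\perp_S}$.

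Griffiths transversality \eqref{1.1} has three steps. The step $\nabla\mathcal{F}^3\subset\mathcal{F}^2\otimes\Omega^1$ is precisely Proposition~\ref{cy-hs}: $\nabla_X\lambda_{SW}=\alpha_X\in F^1=\mathcal{F}^2$, and for a general section $g\lambda_{SW}$ the extra term $(Xg)\lambda_{SW}$ stays in $\mathcal{F}^3\subset\mathcal{F}^2$. The step $\nabla\mathcal{F}^1\subset\mathcal{F}^0\otimes\Omega^1=\mathcal{V}\otimes\Omega^1$ is vacuous. The one real computation is $\nabla\mathcal{F}^2\subset\mathcal{F}^1\otimes\Omega^1$: for $\beta\in F^1$ and $X\in T_B$, flatness of $S$ together with $\lambda_{SW},\alpha_X,\beta\in F^1$ and $S(F^1,F^1)=0$ yield
\begin{align*}
0=X\,S(\lambda_{SW},\beta)&=S(\nabla_X\lambda_{SW},\beta)+S(\lambda_{SW},\nabla_X\beta)\\
&=S(\alpha_X,\beta)+S(\lambda_{SW},\nabla_X\beta)=S(\lambda_{SW},\nabla_X\beta),
\end{align*}
so $\nabla_X\beta\in(\mathcal{F}^3)^{\perp_S}=\mathcal{F}^1$, as required.

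Finally, for \eqref{1.8} one computes the Higgs field on a nonzero $\lambda_0=c\,\lambda_{SW}(b)\in\mathcal{F}^3_b$: by Proposition~\ref{cy-hs}, $C_X\lambda_0=c\,[\alpha_X]\in\mathcal{F}^2_b/\mathcal{F}^3_b=F^1_b/\C\lambda_{SW}(b)$. Here the projectivization is indispensable, and this is the main point of the proof: on $B$ the map $X\mapsto[\alpha_X]$ is \emph{not} injective, since $C_{\mathcal{E}}\lambda_0=c\,[\alpha_{\mathcal{E}}]=c\,[\lambda_{SW}]=0$, so \eqref{1.8} fails over $B$ itself. But $X\mapsto\alpha_X$ is the isomorphism $T_bB\stackrel{\cong}{\longrightarrow}F^1_b$ of \eqref{estms} carrying $\mathcal{E}$ to $\lambda_{SW}$, hence it descends to an isomorphism $T_{[b]}\mathbf{P}(B)=T_bB/\C\mathcal{E}\stackrel{\cong}{\longrightarrow}F^1_b/\C\lambda_{SW}=\mathcal{F}^2_b/\mathcal{F}^3_b$, which is exactly $C_\bullet\lambda_0$ on $\mathbf{P}(B)$. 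Thus \eqref{1.8} holds after projectivizing. The main obstacle is therefore conceptual rather than computational: one must see that quotienting by the $\C^*$-action removes precisely the one-dimensional kernel $\C\mathcal{E}$ of the naive Higgs map, and simultaneously check that the homogeneity $\nabla_{\mathcal{E}}\lambda_{SW}=\lambda_{SW}$ makes $\mathcal{F}^\bullet$ descend so that the verified conditions \eqref{1.1}, \eqref{1.4}, \eqref{1.7} persist on $\mathbf{P}(B)$.
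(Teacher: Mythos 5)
Your proposal is correct and follows essentially the same route as the paper's proof: degree-3 Griffiths transversality and the CY-condition are deduced from Proposition \ref{cy-hs}, and degree-2 transversality from $\nabla$-flatness of $S$ together with the vanishing of $S$ on pairs of elements of $F^1$ --- your abstract computation with $S(F^1,F^1)=0$ is the same calculation the paper writes as the vanishing of integrals of wedge products of holomorphic differentials. The one point where you go beyond the paper is worth noting: the paper disposes of the CY-condition with ``clearly implied by Proposition \ref{cy-hs}'' and never addresses the descent to $\mathbf{P}(B)$, whereas you correctly isolate that on $B$ itself the map $X\mapsto C_X\lambda_0$ has kernel exactly $\C\cdot\mathcal{E}$ (since $\alpha_{\mathcal{E}}=\lambda_{SW}$), so \eqref{1.8} holds only after passing to $T\mathbf{P}(B)=TB/\C\mathcal{E}$, and you also verify \eqref{1.4}, \eqref{1.7} and the nonvanishing of $\lambda_{SW}$, which the paper subsumes under ``clearly $\mathcal{F}^\bullet_b$ defines a decreasing filtration of weight $3$''.
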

\begin{proof}
Clearly, ${\mathcal F}_b^\bullet$ defines a decreasing filtration of weight $3$ on the fiber $\mathcal{V}_b$ 
over $b\in B$. 
Therefore, the only thing left to check is that the filtration satisfies Griffiths transversality with respect
to the Gauss--Manin connection, i.e.,
\[
\nabla{\mathcal F}^\bullet\subseteq{\mathcal F}^{\bullet-1}.
\]
In degree $3$, this property is equivalent to Lemma \ref{gt-deg3}. 
In degree $2$, let $\alpha\in f_*\Omega^1_{\mathscr{C}\slash B}$ and compute
\[
\begin{split}
S_b (\nabla_X\alpha,\lambda_{SW})&=\int_{C_b} \left< \nabla_X\alpha\wedge\lambda_{SW} \right> \\
&=\int_{C_b}\left< \alpha\wedge\nabla_X\lambda_{SW} \right> -d\left(\int_{C_{X(b)}}\left< \alpha\wedge\lambda_{SW}\right> \right)
\\
&=0,
\end{split}
\]
because both $\lambda_{SW}$, as well as its derivatives $\nabla_X\lambda_{SW}$ are holomorphic differentials.
Here $\left<\hdots \right>$ indicates that the pairing on $\mathfrak{t}$ has been used.
Since ${\mathcal F}^1$ is defined as the symplectic complement of $\lambda_{SW}$, 
this proves that $\nabla{\mathcal F}^2\subseteq{\mathcal F}^1$. This completes the proof of Griffiths transversality.

Finally, the CY-condition says that $\nabla{\mathcal F}^3$ should generate ${\mathcal F}^2$. 
But this is clearly implied by Proposition \ref{cy-hs}.
\end{proof}
\begin{remark}
The polarization $S$ has the wrong signature for a full VPHS of weight $3$.
Since this signature is not used in sections \ref{c5} \& \ref{c6} we can endow the base of the Hitchin system with a projective special (K\"ahler) geometry
and apply the results stated there. 
\end{remark}

\subsection{The derivative of the period map}
We give two expressions for the derivative of the period map corresponding to the family of cameral covers $f:{\mathscr{C}} \to B$. One of them (theorem \ref{dp2}) is inspired by the fact that the variation of Hodge structure of weight $1$ can be refined in a natural way to a variation of Hodge structure of weight $3$, which is reminiscent of a family of Calabi-Yau threefolds. The other expression (theorem \ref{thmdp3}) is a residue formula originally due to Balduzzi \cite{BALD:2006}, who generalized a formula of Pantev. Similar formulas are known for matrix models, see e.g. \cite{KRIC:1994}.

Given a base curve and a complex reductive group, consider the family of cameral curves $f:{\mathscr{C}} \to B$ with central fiber $C_{b_0}$.
Associated to this family is a {\emph{period map}} cf. \eqref{3.14}
\[
\Pi:B \to \check D_{lag}
\]
which is given by the embedding ${\mathcal F}^2 \subset \mathcal{V}_{b}$ composed with parallel transport using the Gauss-Manin connection. Recall from (\ref{estms}) that this is a lagrangian embedding with respect to the natural symplectic pairing on $\mathcal{V}_{b_0}$. We are interested in the derivative of the period map
\[
d\Pi_b: \left(T_{B}\right)_{b} \to Hom \left({\mathcal F}_b^2, \left({\mathcal F}_b^2 \right)^* \right)
\]
In terms of the Kodaira-Spencer map and the Gauss-Manin connection a theorem of Griffiths gives
\[
d\Pi_b(X) \left( \alpha,\beta \right) = S_b \left(  \alpha,C_X \beta \right)
\]
Using the natural isomorphism $\left(T_{B}\right)_b \cong  {\mathcal F}^2_b$ given by $X\to \nabla_X \lambda_b$ the derivative $d\Pi$ becomes a tensor on $B$:
\[
d\Pi_b: \left(T_{B}\right)_b \to \left(\left(T^*_{B}\right)_b  \right)^{\otimes 2}
\]
which is given by
\begin{equation}
\label{dp1}
d\Pi_b(X)(Y,Z) = S_b \left( \nabla_Y \lambda_b,\nabla_X \nabla_Z\lambda_b \right)
\end{equation}
Integration by parts combined with the $\nabla$-flatness of $S_{b}$ shows that (this is one of Riemann's bilinear relations)
\[
d\Pi_b: \left(T_{B}\right)_b  \to Sym^2   \left(T^*_{B}\right)_b 
\]
It is well-known \cite{DONA-MARK:1996} that integrable systems give special period maps in the sense that $d\Pi$ is a cubic
\[
d\Pi \in H^0\left( B,Sym^3 \left( T^*_{B} \right) \right)
\]
In the case of the Hitchin system, we can use the variation of weight $3$ given in the previous section together with flatness of $\nabla$ to conclude that $d\Pi(X,Y,Z)$ is indeed symmetric in its first and last arguments:
\begin{eqnarray*}
d\Pi_b (X,Y,Z) - d\Pi_b(Z,Y,X) &=& S_b \left( \nabla_Y \lambda_b,  \nabla_{[X,Z]} \lambda_b  \right)= 0
\end{eqnarray*}
We now arrive at a formula for $d\Pi$ which is reminiscent of a family of Calabi-Yau threefolds, with $\lambda_b$ playing the role of the holomorphic three-form.
\begin{theorem}
\label{dp2}
The derivative of the period map is given by (compare with \eqref{5.13})
\[
d\Pi_b(X,Y,Z) =- \int_{C_{b}} \left<  \lambda_b \wedge \nabla_X \nabla_Y \nabla_Z \lambda_b  \right>
\]
\end{theorem}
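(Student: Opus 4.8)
The plan is to begin from the already-established formula \eqref{dp1},
\[
d\Pi_b(X,Y,Z)=S_b\bigl(\nabla_Y\lambda_b,\nabla_X\nabla_Z\lambda_b\bigr),
\]
and to transfer the two covariant derivatives sitting in the second slot onto $\lambda_b$ by integrating by parts once, using the $\nabla$-flatness \eqref{pol-flat} of the polarization. Throughout I would use the representation $S_b(\alpha,\beta)=\int_{C_b}\left<\alpha\wedge\beta\right>$ of the pairing that already appeared in the proof of Theorem \ref{var-weight3}. Since the quantity $d\Pi_b(X,Y,Z)$ is tensorial and symmetric (checked just before the statement), I may extend $X,Y,Z$ to local commuting vector fields on $B$, which lets me drop all Lie-bracket terms.

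The key input is that the second argument already lands deep in the filtration. By Griffiths transversality, established in Theorem \ref{var-weight3}, applying $\nabla$ twice to $\lambda_b\in{\mathcal F}^3$ gives $\nabla_X\nabla_Z\lambda_b\in{\mathcal F}^1$. Because ${\mathcal F}^1=({\mathcal F}^3)^{\perp S}$ by definition, the pairing
\[
S_b\bigl(\lambda_b,\nabla_X\nabla_Z\lambda_b\bigr)=0
\]
vanishes identically on $B$. Differentiating this identity in the $Y$-direction and invoking \eqref{pol-flat} yields
\[
0=S_b\bigl(\nabla_Y\lambda_b,\nabla_X\nabla_Z\lambda_b\bigr)+S_b\bigl(\lambda_b,\nabla_Y\nabla_X\nabla_Z\lambda_b\bigr),
\]
so that $d\Pi_b(X,Y,Z)=-S_b\bigl(\lambda_b,\nabla_Y\nabla_X\nabla_Z\lambda_b\bigr)$.

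To finish, I would commute the outer two derivatives: since $\nabla$ is flat and $[X,Y]=0$ for the chosen extensions, $\nabla_Y\nabla_X=\nabla_X\nabla_Y$ on sections, whence
\[
d\Pi_b(X,Y,Z)=-S_b\bigl(\lambda_b,\nabla_X\nabla_Y\nabla_Z\lambda_b\bigr)=-\int_{C_b}\left<\lambda_b\wedge\nabla_X\nabla_Y\nabla_Z\lambda_b\right>,
\]
which is the claimed formula. The only point requiring genuine care---the ``main obstacle''---is the bookkeeping of filtration levels that forces the boundary pairing $S_b(\lambda_b,\nabla_X\nabla_Z\lambda_b)$ to vanish: this is exactly the combination of Griffiths transversality and the isotropy ${\mathcal F}^1=({\mathcal F}^3)^{\perp S}$ of the refined weight-three variation, and it is precisely the feature that was absent at weight one and is supplied by Theorem \ref{var-weight3}.
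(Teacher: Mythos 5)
Your proof is correct and follows essentially the same route as the paper, whose one-line proof invokes exactly the ingredients you spell out: integration by parts via the $\nabla$-flatness of $S_b$ in \eqref{dp1}, the vanishing $S_b(\lambda_b,\nabla_X\nabla_Z\lambda_b)=0$ forced by Griffiths transversality and ${\mathcal F}^1=({\mathcal F}^3)^{\perp S}$, and a reordering of derivatives using flatness/symmetry. Your argument is in fact the same manipulation the paper carries out explicitly for the analogous special-geometry formula \eqref{5.13} in Lemma \ref{c5.3}(d), which is why the theorem says ``compare with \eqref{5.13}''.
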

\begin{proof}
Use integration by parts with respect to $Z$ in (\ref{dp1}), the $\nabla$-flatness of $S_b$ and the symmetry in $X,Y,Z$.
\end{proof}
\begin{remark}
In \cite{DIAC-DONA-PANT:2007} a family of noncompact CY-threefolds was constructed in the case of $ADE$ groups whose variation of mixed Hodge structure of weight $3$ turns out to be pure, and in fact a Tate twist of a variation of Hodge structure of weight $1$, which is compatible with the fact that $S$ defines an indefinite polarization in weight $3$. 
The authors of \cite{DIAC-DONA-PANT:2007} have shown that the Yukawa cubic of this family of threefolds corresponds to the cubic above.
\end{remark}

The expression in theorem \ref{dp2} is not manifestly symmetric in its arguments. There is another, more symmetric,  formula due to Balduzzi \cite{BALD:2006} who generalized a result for $G=SL_2$ by Pantev. 
We will give a different derivation of his result here, which uses the \v{C}ech-de Rham complex as described in section \ref{section cdr}.
We will choose coordinates on $U\cap V$ suggested by the cameral cover $\pi:C_b \to C$: one can pull back an affine coordinate on $C$ via $\pi$ to serve as a local coordinate $z_U$ on $U$. For a generic point $b \in B$ the cover has second order branch points, which we will view as maps
\begin{displaymath}
    \xymatrix{ p:B \ar[r]^r \ar@{.>}[r] & V \ar[r]^{z_U} \ar@{.>}[r] & \mathbb{C}}
\end{displaymath}
Given the branch point $p$, a suitable holomorphic coordinate on the component of $V$ containing $r(b)$ is given by 
\[
z_V = \sqrt{z_U-p(b)}.
\]
The Seiberg-Witten differential has a second order zero at each of the branch points and can be represented in the \v{C}ech-de Rham complex by
\[
\lambda_b = \left(fz_V^2dz_V|_U,fz_V^2dz_V|_V,0\right)
\]
where $f$ is a $\mathfrak{t}$-valued holomorphic function on $V$ with $f\circ r(b)\neq 0$. 
The horizontal lifts $X_U,X_V$ of a vector field $X$ on $B$ are determined by the chosen coordinates via
\[
X_U(z_U)=0 \qquad X_V(z_V)=0
\]
We are now ready to compute the contribution to $d\Pi$ coming from the component of $V$ containing $p$.
A straightforward computation using (\ref{GM}) and the fact that
\[
\iota_{X_U}  dz_V  = \frac{-L_X(p)(b)}{2z_{V}}
\]
now gives
\[
\nabla_Y \lambda_b = \left(*,*, -\frac{z_{V}f}{2}L_Y(p)(b) \right)
\]
which has a first order zero at the branch point.
The first two terms will not contribute to (\ref{dp1}), so we omit them here. Acting with $\nabla_X$ gives
\begin{eqnarray*}
\nabla_X \nabla_Y \lambda_b &=&  \left(*,*,*-L_{X_U}\left( \frac{z_{V}f}{2}L_Y(p)(b)  \right) \right) \\
&=& 
\left(*,*,*+\frac{f}{4z_{V}}L_X(p)(b)L_Y(p)(b)\right)
\end{eqnarray*}
Only the term containing a pole at the branch point is displayed and terms which are irrelevant for (\ref{dp1}) are omitted. Using (\ref{trres}), we now arrive at the following result
\[
d\Pi(X,Y,Z) = \sum_{p\in D_{br}}  \frac{L_X(p)L_Y(p)L_Z(p)}{8}  \left\langle f\circ r, f\circ r \right\rangle
\]
where $\left\langle .,. \right\rangle$ denotes the pairing between two elements of $\mathfrak{t}$.
For semi-simple Lie groups there is only one Weyl-invariant pairing up to a scalar, which is the Killing form. 
It gives an isomorphism $\mathfrak{t}\cong \mathfrak{t}^*$ and the pairing can be expressed in terms of the root system $R$ as
\begin{equation}
\label{killing}
\left< h_1,h_2 \right> = \sum_{\alpha \in R}\left< h_1,\alpha \right> \left< \alpha,h_2\right>
\end{equation}
The quadratic residue $Res_p^2$ of a quadratic differential at a point $p$ is defined as the coefficient of $z^{-2}dz\otimes dz$ in a Laurent expansion in terms of a coordinate $z$ centered at $p$, and is independent of $z$.
\begin{theorem}[Balduzzi]
\label{thmdp3}
For semi-simple groups
\begin{equation}
\label{dp3}
d\Pi_b(X,Y,Z) =  \sum_{p\in D_{br}} \sum_{\alpha \in R}Res_{p}^2 \left[ \frac{\left< \alpha,\nabla_X \lambda_b \right>\otimes\left< \alpha,\nabla_Y \lambda_b \right> \otimes\left< \alpha,\nabla_Z \lambda_b \right>}{\left< \alpha,\lambda_b \right>} \right]
\end{equation}
\end{theorem}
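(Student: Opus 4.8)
The plan is to deduce the formula directly from the explicit local expression for $d\Pi$ obtained in the paragraph preceding the statement, namely $d\Pi(X,Y,Z)=\sum_{p\in D_{br}}\tfrac{1}{8}L_X(p)L_Y(p)L_Z(p)\,\langle f\circ r,f\circ r\rangle$, by rewriting the Killing form as a sum over roots and recognizing each summand as a quadratic residue. First I would fix a branch point $p$ and work in the uniformizing coordinate $z_V=\sqrt{z_U-p(b)}$ on the fibre $C_b$, in which $\lambda_b=f z_V^2\,dz_V$, so that for each root $\alpha\in R$ the one-form $\langle\alpha,\lambda_b\rangle=\langle\alpha,f\rangle z_V^2\,dz_V$ has a double zero at $z_V=0$, while at a point of the intersection divisor $D_{int}$ it has only a simple zero.

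Next I would compute the honest local expansion near $p$ of the globally holomorphic form $\nabla_X\lambda_b=\alpha_X$ (holomorphy by Proposition \ref{cy-hs}). Writing $\lambda_b=\tfrac{f}{2}(z_U-p(b))^{1/2}\,dz_U$ and differentiating the coefficient while holding the flat $z_U$-cycles fixed (which computes the variation of periods), the singular part of the derivative is $-\tfrac14 f\,L_X(p)(z_U-p)^{-1/2}\,dz_U$, which in the coordinate $z_V$ becomes $-\tfrac12 f\,L_X(p)\,dz_V$; hence $\nabla_X\lambda_b$ does \emph{not} vanish at the branch point and has leading coefficient proportional to $f\,L_X(p)$. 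Consequently $\langle\alpha,\nabla_X\lambda_b\rangle=-\tfrac12 L_X(p)\langle\alpha,f\rangle\,dz_V+O(z_V)$.

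With these expansions the meromorphic quadratic differential in the integrand is, for each $\alpha$,
\[
\frac{\langle\alpha,\nabla_X\lambda_b\rangle\otimes\langle\alpha,\nabla_Y\lambda_b\rangle\otimes\langle\alpha,\nabla_Z\lambda_b\rangle}{\langle\alpha,\lambda_b\rangle}
= \frac{\pm\tfrac18\,L_X(p)L_Y(p)L_Z(p)\,\langle\alpha,f\rangle^2}{z_V^2}\,(dz_V)^{\otimes 2}+\text{(lower order poles)},
\]
so that $Res^2_p$ extracts exactly $\pm\tfrac18 L_X(p)L_Y(p)L_Z(p)\langle\alpha,f\circ r\rangle^2$. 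The only double poles occur along $D_{br}$: at a point of $D_{int}$ the denominator has a simple zero and the numerator is generically nonzero, giving a simple pole and hence vanishing quadratic residue, so no spurious contributions enter. Summing over all roots and invoking the Killing-form identity \eqref{killing}, $\sum_{\alpha\in R}\langle\alpha,f\rangle^2=\langle f,f\rangle$, reconstitutes the factor $\langle f\circ r,f\circ r\rangle$ at each $p$, and summing over $p\in D_{br}$ reproduces the explicit formula for $d\Pi$, which is the assertion.

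The main obstacle I anticipate is the second step: one must carefully separate the genuine holomorphic form $\nabla_X\lambda_b$ from the \v{C}ech--de Rham representative used earlier (whose $(\mathrm{II})$-component $-\tfrac12 z_V f L_X(p)$ does vanish at $p$ and must not be mistaken for the form itself), and then track the overall sign and normalization through the conventions for $Res^2_p$, the normalization of the Killing form, and the orientation entering the trace--residue pairing \eqref{trres}, so that the final coefficient lands on $+\tfrac18$ rather than its negative.
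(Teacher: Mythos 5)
Your strategy coincides with the paper's: verify \eqref{dp3} against the explicit branch-point formula $d\Pi_b(X,Y,Z)=\sum_{p\in D_{br}}\tfrac18 L_X(p)L_Y(p)L_Z(p)\langle f\circ r,f\circ r\rangle$ derived just before the theorem, by Laurent-expanding the quadratic differential at each $p\in D_{br}$, extracting $Res^2_p$, and summing over roots via \eqref{killing}. Your key local computation is in fact carried out more carefully than in the paper: the genuine holomorphic representative of $\nabla_X\lambda_b$ (holomorphic by Lemma \ref{gt-deg3} and Proposition \ref{cy-hs}) has expansion $\nabla_X\lambda_b=\bigl[-\tfrac{f}{2}L_X(p)+O(z_V)\bigr]dz_V$, exactly as you find. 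One can confirm your sign within the \v{C}ech--de Rham picture: correcting the cocycle $(L_{X_U}\lambda,L_{X_V}\lambda,\iota_{X_U-X_V}\lambda|_V)$ by the coboundary of a $0$-cochain supported on $V$, the representative near $p$ is $L_{X_V}\lambda|_V+d\bigl(\iota_{X_U-X_V}\lambda|_V\bigr)=\bigl[L_X(f)z_V^2-\tfrac{L_X(p)}{2}(f+z_V\partial_{z_V}f)\bigr]dz_V$, whose value at $z_V=0$ is $-\tfrac{f}{2}L_X(p)\,dz_V$. Your distinction between this representative and the vanishing \v{C}ech $(\mathrm{II})$-component is precisely the right point to insist on; note that the paper's own proof asserts the opposite sign, $\nabla_X\lambda=\bigl[+\tfrac{f}{2}L_X(p)+\cdots\bigr]dz_V$, without derivation. (Your aside about $D_{int}$ is harmless but unnecessary, since \eqref{dp3} sums only over $D_{br}$.)

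The genuine gap is that the ``$\pm$'' you defer cannot be left open, and resolving it with your (correct) expansion does \emph{not} land on $+\tfrac18$: each of the three numerator factors contributes $-\tfrac12 L(p)\langle\alpha,f\rangle$, the denominator contributes $\langle\alpha,f\rangle z_V^2$, so $Res^2_p=(-\tfrac12)^3L_X(p)L_Y(p)L_Z(p)\langle\alpha,f\rangle^2=-\tfrac18 L_X(p)L_Y(p)L_Z(p)\langle\alpha,f\rangle^2$, and after summing over roots and branch points the right-hand side of \eqref{dp3} equals $-d\Pi_b(X,Y,Z)$ relative to the explicit formula obtained from \eqref{GM}, \eqref{trres} and \eqref{dp1}. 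In other words, your argument, completed honestly, proves the stated identity only up to an overall sign within the paper's conventions --- and the same is true of the paper's proof once its asserted expansion is corrected. To close the proof you must locate the compensating minus sign by auditing the conventions entering \eqref{trres}, \eqref{dp1}, or the identification $T_bB\cong{\mathcal F}^2_b$, $X\mapsto\nabla_X\lambda_b$; it cannot simply be declared that the bookkeeping works out, because under the conventions as literally stated it does not.
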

\begin{proof}
The quotient of two holomorphic differentials is a meromorphic function, so the term in brackets is a meromorphic quadratic differential.
From the computation of the Gauss-Manin derivatives in the \v{C}ech-de Rham complex given above one finds that the derivatives of $\lambda$ have an expansion around the branch points in terms of $z_V$
\[
\nabla_X \lambda = \left[ L_X(p)\frac{f}{2} + \mathcal{O}(1) \right]dz_V 
\]
Similarly
\[
\frac{\left< \alpha, \nabla_X \lambda \right>}{\left<\alpha, \lambda \right>} = \frac{1}{2z_V^2}\left[ \frac{L_X(p)\left< \alpha,f \right>}{\left< \alpha, f \right>} + \mathcal{O}(1)  \right]
= \frac{1}{2z_V^2} \left[ L_X(p) + \mathcal{O}(1) \right]
\]
Taking the quadratic residue and using (\ref{killing}) directly gives the desired result.
\end{proof}
\begin{remark}
Replacing the root system by an orthonormal basis for the dual pairing gives an analogous expression for $d\Pi$ in the case of reductive non-semisimple groups.
\end{remark}

\section{The Frobenius manifold}
\label{Frob}
The results of the previous section show that the Hitchin system gives rise to projective special geometry as in section \ref{c5} on $\mathbf{P}(B)$. We illustrate in this case the choices necessary to define a Frobenius manifold structure: a natural generator $\lambda_0\in F_0^3$ is provided by the Seiberg-Witten differential, and a choice of opposite filtration $U_\bullet$ is described geometrically in terms of a choice of cycles on the cameral curve.

\subsection{The opposite filtration}
Recall the discussion of opposite filtrations in section \ref{c1.2}. There is a natural procedure to define an opposite filtration on $\mathcal{V}$, viewed as a VHS of weight $3$, as follows:
fix $b\in B$, and consider $\left(\mathcal{V}_\mathbb{Z}\right)_b^*=H_1(C_b,\Lambda)^W$. Combining the inner product on $\Lambda$ with the intersection form on $H_1(C_b,\Z)$ defines a symplectic form $\mathcal{I}$, the dual of $S$, on the lattice $H_1(C_b,\Lambda)^W$:
\[
\mathcal{I}(c_1,c_2):=\left< c_1\cdot c_2\right>,
\]
for $c_1,c_2\in H_1(C_b,\Lambda)^W$.

Now we choose a lagrangian subspace $L^2\subset H_1(C_b,\Lambda)^W$ and a one-dimensional subspace $L^3\subset L^2$ of it, subject to the condition
\begin{equation}
\label{cond-basis}
L^3 \not\subset \ker \lambda_b
\end{equation}
We will also need the complement $L^1=\left( L^3 \right)^{\perp \mathcal{I}}$ of $L^3$ with respect to $\mathcal{I}$.
With this we define 
\[
\begin{split}
(U_0)_b&:=\{v\in\mathcal{V}_b,~L^1\subset \ker v \}\\
(U_1)_b&:=\{v\in\mathcal{V}_b,~L^2\subset \ker v\}\\
(U_2)_b&:=\{v\in\mathcal{V}_b,~L^3\subset \ker v\}\\
(U_3)_b&:=\mathcal{V}_b
\end{split}
\]
We extend these subspaces by parallel transport to $\nabla$-flat subbundles $U_\bullet$ in a small 
neighbourhood of $b\in B$.
\begin{proposition}
\label{propopp}
$U_\bullet$ defines an opposite filtration for the variation of Hodge-like filtrations on $\mathcal{V}$.
\end{proposition}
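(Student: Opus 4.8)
The plan is to verify directly the three conditions defining an opposite filtration in \S\ref{c1.2}: that $U_\bullet$ is increasing and $\nabla$-flat, that it is $S$-isotropic in the sense $S\big((U_p)_b,(U_{2-p})_b\big)=0$, and that it is genuinely opposite, $\mathcal{V}_b=F^p_b\oplus(U_{p-1})_b$ for every $p$. Everything is governed by one dictionary, which I would record first. Identify $\mathcal{V}_b$ with the dual of $W:=H_1(C_b,\Lambda)^W\otimes\C$ via the period pairing; since $\mathcal{I}$ is by construction the form dual to $S$, the isomorphism $\phi\colon W\to\mathcal{V}_b$, $\phi(w)=\mathcal{I}(w,\cdot)$, satisfies $(U_k)_b=\mathrm{Ann}(L^{k+1})=\phi\big((L^{k+1})^{\perp\mathcal{I}}\big)$ for $k=0,1,2$, and transports $\mathcal{I}$ to $S$ (up to an overall sign, immaterial below), i.e. $S(\phi(u),\phi(v))=\mathcal{I}(u,v)$. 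Consequently $S\big(\mathrm{Ann}(A),\mathrm{Ann}(B)\big)=\mathcal{I}\big(A^{\perp\mathcal{I}},B^{\perp\mathcal{I}}\big)$ for all subspaces $A,B\subset W$, and the rest is symplectic linear algebra on $W$ plus a single analytic input at the very end.

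Next I would dispatch the formal conditions. Because $L^2$ is Lagrangian and contains $L^3$, we get $L^3\subset L^2\subset(L^3)^{\perp\mathcal{I}}=L^1$; applying $\mathrm{Ann}$, which reverses inclusions, yields $U_0\subset U_1\subset U_2\subset U_3$, with ranks $1,n+1,2n+1,2n+2$ as required by an opposite filtration. Flatness of $U_\bullet$ is built into the construction by parallel transport, using the $\nabla$-flatness of $S$ (hence of $\mathcal{I}$). For isotropy the displayed formula gives $S\big((U_0)_b,(U_2)_b\big)=\mathcal{I}(L^3,L^1)=\mathcal{I}\big(L^3,(L^3)^{\perp\mathcal{I}}\big)=0$ and $S\big((U_1)_b,(U_1)_b\big)=\mathcal{I}(L^2,L^2)=0$, the latter again by the Lagrangian property; the remaining case $p=2$ follows by antisymmetry of $S$.

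It remains to establish the three nontrivial splittings $\mathcal{V}_b=F^p_b\oplus(U_{p-1})_b$ for $p=1,2,3$. For $p=3$ the condition $F^3_b\cap(U_2)_b=0$ reads $\lambda_b\notin\mathrm{Ann}(L^3)$, that is $L^3\not\subset\ker\lambda_b$, which is exactly the hypothesis \eqref{cond-basis}; as $\dim F^3_b+\mathrm{rank}\,U_2=2n+2$ this gives $\mathcal{V}_b=F^3_b\oplus(U_2)_b$. The case $p=1$ then follows by $S$-duality: by definition $F^1_b=(F^3_b)^{\perp_S}$, while isotropy $S\big((U_0)_b,(U_2)_b\big)=0$ together with a dimension count gives $(U_2)_b^{\perp_S}=(U_0)_b$; taking $\perp_S$ of $F^3_b\cap(U_2)_b=0$ yields $F^1_b+(U_0)_b=\mathcal{V}_b$, hence $\mathcal{V}_b=F^1_b\oplus(U_0)_b$.

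The one genuinely analytic step, and the main obstacle, is the middle case $p=2$, namely $F^2_b\cap(U_1)_b=0$. Here $F^2_b=\mathcal{F}^2_b=H^0(C_b,\mathfrak{t}\otimes K_{C_b})^W$ consists of holomorphic differentials, while $(U_1)_b=\mathrm{Ann}(L^2)$ consists of classes with vanishing periods over the Lagrangian $L^2$, so I must show that a $W$-invariant holomorphic differential all of whose $L^2$-periods vanish is zero. This cannot be read off from $S$ on $\mathcal{V}_b$ as a weight-three object, since that pairing is indefinite (cf.\ the Remark following Theorem \ref{var-weight3}); I would instead invoke the positivity of the underlying genuine weight-one polarization. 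Concretely, if $\alpha\in F^2_b$ has zero $L^2$-periods, choosing a real Lagrangian complement to $L^2$ and applying the Riemann bilinear relations expresses $i\int_{C_b}\langle\alpha\wedge\overline\alpha\rangle$ entirely through the vanishing $L^2$-periods, forcing it to vanish; positivity of the weight-one Hodge structure then gives $\alpha=0$, and a dimension count completes the splitting and hence the proof.
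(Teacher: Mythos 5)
Your proof is correct and follows essentially the same route as the paper's: check $\nabla$-flatness, reduce each splitting $\mathcal{V}_b=F^p_b\oplus (U_{p-1})_b$ to $F^p_b\cap (U_{p-1})_b=0$ via a rank count, use condition \eqref{cond-basis} for $p=3$, the duality of $S$ and $\mathcal{I}$ for $p=1$ and for the isotropy $S(U_p,U_{2-p})=0$. The only difference is expository: where the paper settles the $p=2$ case by citing Abel's theorem, you spell out the underlying fact (a $W$-invariant holomorphic differential whose periods over the real Lagrangian $L^2$ vanish is zero, by the Riemann bilinear relations and positivity of the weight-one polarization), which is the same mathematical content in more detail.
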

\begin{proof}
By construction, the subbundles $U_\bullet$ are $\nabla$-flat. Next, let us check that
\[
\mathcal{V}_b={F}^p_b \oplus \left(U_{p-1}\right)_b,
\]
for all $p=0,\ldots,3$. For this, first observe that $\rk(\mathcal{V})=\rk({F}^p)+\rk (U_{p-1})$,
 so we just have to verify that ${F}^p_b\cap(U_{p-1})_b=\{0\}$.
 Since ${F}^3_b$ is spanned by $\lambda_b$, this follows for $p=3$ from condition
\eqref{cond-basis}. For $p=2$, we have that $\alpha\in {F}^2\cap U_1$ implies that
$\alpha\in H^0(C_b,\mathfrak{t}\otimes K_{C_b})^W$, i.e., $\alpha$ is a holomorphic differential, and $L^2\subset \ker \alpha$.
By Abel's theorem, this implies that $\alpha$ has to be zero.
An element $\alpha \in {F}^1 \cap U_0$ satisfies by definition
\[
\alpha \in \left( {F}^3 \right)^{\perp S} \qquad \& \qquad \left( L^3 \right)^{\perp \mathcal{I}} \subset \ker \alpha
\]
But the fact that $S$ and $\mathcal{I}$ are dual implies that \eqref{cond-basis} is equivalent to
\[
\left( L^3 \right)^{\perp \mathcal{I}} \not\subset \ker \left( \lambda_b^{\perp S} \right)
\]
Finally, the condition that $S(U_p,U_{2-p})=0$ follows from the fact that $\mathcal{I}$ is the dual of $S$ and $\mathcal{I}\left( L^p,L^{4-p}\right)=0$.
\end{proof}

\subsection{Special coordinates and the prepotential}
Now we choose a symplectic basis $(a_1,\dots,a_n,b_1,\dots,b_n)$ of $H_1(C_b,\Lambda)^W$ with the sets of cycles $\{a_1\}, \{a_1,\dots,a_n\},\{a_1,\dots,a_n,b_2,\dots,b_n\}$ providing bases for $L^3,L^2,L^1$ respectively.
An alternative proof of proposition \ref{propopp} can be given by using this basis and the fact that
\[
S(\alpha,\beta) = \sum_{k=1}^{n+1} \left( \int_{a_k}\alpha\int_{b_k}\beta - \int_{b_k}\alpha\int_{a_k}\beta \right)
\]

The choice of cycles gives rise to the special coordinates
\[
z_i = \oint_{a_i} \lambda
\]
and adjoint coordinates
\[
\frac{\partial \Psi}{\partial z_i} = \oint_{b_i} \lambda
\]
on the Hitchin base $B$, where $\Psi \in \mathcal{O}_B$ denotes the prepotential. The choice of $U_0$ (or $L^3$) determines the hyperplane
\[
B_0 := \{ z_1=1 \} \subset B 
\]
and the coordinates $t_i=z_i|_{z_1=1}$ on it. 
The germ of a Frobenius manifold structure on $\mathbb{C}\times B \times \mathbb{C}^{n+1}$ is completely specified by coordinates $t_1,\dots t_{2n+2}$ which include the coordinates on $B$ just defined, together with the potential $\Phi(t)$ in \eqref{2.19} and the Euler vector field $E$ in \eqref{2.22}.

\bibliographystyle{amsalpha}
\newcommand{\etalchar}[1]{$^{#1}$}
\def\cprime{$'$}
\providecommand{\bysame}{\leavevmode\hbox to3em{\hrulefill}\thinspace}
\providecommand{\MR}{\relax\ifhmode\unskip\space\fi MR }
\providecommand{\MRhref}[2]{%
  \href{http://www.ams.org/mathscinet-getitem?mr=#1}{#2}
}
\providecommand{\href}[2]{#2}

\end{document}